\newtheorem{proposition}{Proposition}[section]
\newtheorem{theorem}{Theorem}[section]
\newtheorem{lemma}{Lemma}[section]
\newtheorem{definition}{Definition}[section]
\newtheorem{remark}{Remark}[section]
\newenvironment{proof}{\smallskip\noindent\emph{Proof.}\hspace{1pt}}%
{\hspace{-5pt}{\nobreak\quad\nobreak\hfill\nobreak$\square$\vspace{8pt}%
\par}\smallskip\goodbreak}
\newenvironment{proofof}[1]{\smallskip\noindent\emph{Proof of #1.}%
\hspace{1pt}}{\hspace{-5pt}{\nobreak\quad\nobreak\hfill\nobreak%
$\square$\vspace{8pt}\par}\smallskip\goodbreak}
\newcommand{\Lloc}[1]{\mathbf{L^{#1}_{loc}}}
\newcommand{\C}[1]{\mathbf{C^{#1}}}
\newcommand{\Cc}[1]{\mathbf{C_c^{#1}}}
\newcommand{\modulo}[1]{{\left|#1\right|}}
\newcommand{\norma}[1]{{\left\|#1\right\|}}
\newcommand{\reali}{{\mathbb{R}}}
\newcommand{\naturali}{{\mathbb{N}}}
\newcommand{\N}{{\mathbb{N}}}
\newcommand{\tv}{\mathrm{TV}}
\newcommand{\BV}{\mathbf{BV}}
\newcommand{\W}[1]{\mathbf{W^{#1}}}
\newcommand{\eps}{\varepsilon}
\newcommand{\LL}[1]{\mathbf{L^#1}}
\newcommand{\Wloc}[1]{\mathbf{W^{#1}_{loc}}}
\newcommand{\rs}{\mathcal{RS}}
\newcommand{\fw}{$\mathcal{FW}$\,}
\newcommand{\nfw}{$\mathcal{NFW}$\,}
\newcommand{\snfw}{$\mathcal{SNFW}$\,}
\newcommand{\rev}[1]{{#1}}
\newcommand{\revv}[1]{{#1}}
\definecolor{ddgreen}{rgb}{0,0.4,0.4}
\definecolor{dgreen}{rgb}{0,0.65,0}
\begin{document}


\title{Autonomous Vehicles Driving Traffic:\\ The Cauchy Problem}

\author{M. Garavello\thanks{Dipartimento di Matematica e Applicazioni,
    Universit\`a di Milano Bicocca, Via R. Cozzi 55, 20125 Milano, Italy.
    E-mail: \texttt{mauro.garavello@unimib.it}} \and
  F. Marcellini\thanks{INdAM Unit, Dipartimento di Ingegneria dell'Informazione, Universit\`a di Brescia, Via Branze 38, 25123 Brescia, Italy.
    E-mail: \texttt{francesca.marcellini@unibs.it}}}

\maketitle

\begin{abstract}
This paper deals with the Cauchy Problem for a PDE-ODE model, where a system of two conservation laws, namely the Two-Phase macroscopic model proposed
in~\cite{ColomboMarcelliniRascle}, is coupled with an ordinary differential
equation describing the trajectory of an autonomous vehicle (AV), which aims to control the traffic flow. 
Under suitable assumptions, we prove a global in time existence result.

\noindent\textit{2000~Mathematics Subject Classification:} 35L65,
  90B20
 \medskip
 
\noindent\textit{Key words and phrases:} 
$2 \times 2$ Hyperbolic Systems of Conservation Laws, 
Mixed PDE-ODE System, Continuum Traffic Models, 
Autonomous Vehicles, Control Problems.
\end{abstract}

\section{Introduction}

We consider a coupled PDE-ODE system that describes 
the mutual interaction between bulk traffic and 
an autonomous vehicle (AV) responsible to control the traffic flow.
In recent years, the study of autonomous vehicles (AVs) to regulate road traffic developed a lot. The AVs acting as controllers, appear to be the most innovative technology for traffic monitoring and management, also in order to reduce congested traffic and pollution; see~\cite{Davis, GMBE, KFB, TAH, Wang}. 
The main results in this direction are based on ODEs that describe the 
trajectories of both vehicles driven by humans and the AVs, see for instance~\cite{DelleMonache2019275}. More recently, PDE models were coupled with ODEs via a moving flux constraint, see~\cite{DelleMonacheGoatin, DP15} or via AVs see~\cite{MR4002732, GaravelloGoatinLiardPiccoli}.
The novelty of this paper is the use of a Two-Phase model
for modeling the bulk traffic in the road.
Up to now, only the scalar case has been treated in the
literature~\cite{GaravelloGoatinLiardPiccoli}.

The PDE system considered in this paper is the Two-Phase traffic model recently proposed in~\cite{ColomboMarcelliniRascle}. This is a possibly degenerate system of two conservation laws with Lipschitz continuous flow. 
It is coupled with an ordinary differential equation describing the trajectory of the AV.
More precisely, the bulk traffic is governed by the system
\begin{equation}
  \label{eq:2-phase}
  \left\{
    \begin{array}{l}
      \partial_t \rho +
      \partial_x \left( \rho\, v (\rho,w) \right) = 0
      \\
      \partial_t (\rho w) +
      \partial_x \left( \rho w\, v (\rho, w) \right) = 0\,,
    \end{array}
  \right. 
\end{equation}
where $\rho = \rho(t,x) \in [0, R]$ is the car density,
$w = w(t,x) \in [w_{\min},
w_{\max}]$ denotes the maximal
speed of drivers, and $v = v(\rho, w) =
\min \left\{ V_{\max},w \psi(\rho) \right\}$
is the average speed. Here $V_{\max}$ is the maximal
velocity of all the drivers and $\psi=\psi(\rho)$ a decreasing function.
The AV dynamics is described by the control equation
\begin{equation}
  \label{eq:Control-ODE}
  \dot y(t) = \min \left\{u(t), v \left(\rho\left(t, y(t) +\right),
    w\left(t, y(t)+\right)\right) \right\},
\end{equation}
where $y(t)$ denotes the position of the AV at time $t$ and the control
$u = u(t) \in \left[0, V_{\max}\right]$
represents the desired speed. The minimum in~(\ref{eq:Control-ODE})
takes care of the fact that the AV can not 
go faster than the cars immediately in front.
Following~\cite{DelleMonacheGoatin, GaravelloGoatinLiardPiccoli} the complete system is
\begin{equation}
  \label{eq:RPM}
  \left\{
    \begin{array}{l}
      \partial_t \rho +
      \partial_x \left( \rho\, v (\rho,w) \right) = 0
      \\
      \partial_t (\rho w) +
      \partial_x \left( \rho w\, v (\rho, w) \right) = 0
      \vspace{.2cm}\\
      \dot{y}(t) = \min \left\{u(t),
      v \left( \rho(t,y(t)+), w(t,y(t)+) \right)\right\}
      \vspace{.2cm}\\
      \rho\left( t,y(t)\right)
      \left( v(\rho(t,y(t)), w(t,y(t))) - \dot{y}(t)\right)
      \leq F_{\alpha}(w, \dot y(t))\,,    
    \end{array}
  \right. 
\end{equation}
where the last inequality imposes a flux constraint at the
position of the AV through the function
$F_{\alpha}$, which describes the reduced capacity of the road at the
AV position. 
Finally, we couple~(\ref{eq:RPM}) with the initial conditions
\begin{equation}
  \label{eq:IC}
  \left\{
    \begin{array}{l}
      \left(\rho(0, x), w(0, x)\right) = \left(\rho_0(x), w_0(x)\right)
      \\
      y(0) = y_0.
    \end{array}
  \right.
\end{equation}

In this paper, we prove a global in time existence of solutions to the Cauchy Problem for the PDE-ODE model~\eqref{eq:RPM}-\eqref{eq:IC}.
Differently from classical results for hyperbolic conservation
laws, we are able to consider initial conditions with finite total variation, not necessarily small.
\rev{A consequence is that system~\eqref{eq:RPM} can be easily
  generalized to the
  case of several autonomous vehicles provided they do not interact each other}.
\\
Existence
of solutions is obtained through compactness:
we use, for the PDE system,
the Helly's Theorem together with the
wave-front tracking method, while, for the
ODE control equation, the classical Ascoli-Arzelà Theorem.
Here, different from the scalar case where fine estimates on traces
are necessary,
non-characteristic conditions at the AV location
guarantee that the limit is a solution to the Cauchy problem.

The literature on the modeling of vehicular traffic offers a lot of different approaches as macroscopic, microscopic and kinetic models possibly coupled between them. 
In the context of macroscopic models, based on partial differential equations, the basic one is the classical Lighthill--Whitham~\cite{LighthillWhitham} and Richards~\cite{Richards} (LWR) model, given by a single conservation law. Then, the so called \emph{second order} ones are based on two equations, as the
Aw-Rascle-Zhang (ARZ)~\cite{AwRascle, Zhang2002}, the 
GARZ~\cite{FHS13} and the collapsed GARZ (CGARZ) 
model~\cite{2017arXiv170203624F}. A further class, again based on two 
equations, is that of Two-Phase or Phase Transition models 
as~\eqref{eq:2-phase}; they are characterized by two different phases: 
the \textit{Free} and the \textit{Congested one}. A peculiarity of 
two-phase models is the existence of a free regime where only the 
density characterizes the state of the system, while in congested 
regime it is necessary the use of an additional quantity.
Thus, in the free phase the model reduces to a single conservation law,
while in the congested phase it is a hyperbolic system of two conservation laws. For other Two-Phase and Phase Transition models see~\cite{MR4244931, MR4177199, BlandinWorkGoatinPiccoliBayen, Colombo1.5, Goatin2Phases, LebacqueMammarHajSalem}. 
For other kinetic, microscopic or coupled PDE-ODE descriptions see~\cite{MR3253235, MR3019727, ColomboMarcelliniPreprint, MR4357097,  g-p-coupling-2012, GazisHermanRothery, LattanzioPiccoli, MR4172835}.

Up to now, existence of solution to the Cauchy problem
for AV coupled with PDE
has been obtained only in the scalar case, specifically
for the LWR model. A similar, but different, approach consists
in the study of conservation laws with pointwise unilateral constraints on the flow. Their peculiarity is the possible presence of a non-classical shock, violating the classical Kru\v{z}kov~\cite{MR0267257} or Lax~\cite{MR0350216}
entropy admissibility conditions, at the constraint position. Scalar conservation laws with fixed flux constraints have been introduced in~\cite{ColomboGoatin}; here, the problem is to provide a mathematical framework to model local constraints in traffic flow, such as traffic lights or tool gates. Results for the second order models as the Aw-Rascle-Zhang (ARZ) model with fixed constraints are provided in~\cite{ADR, GaravelloGoatin, GaravelloVilla}. Problems with moving constraints have been considered in~\cite{DelleMonacheGoatin} for the scalar case and~\cite{VGC} for the ARZ model.

The article is organized as follows. Section~\ref{sec:basic-assumptions} gives a description of the model from an analytic point of view and describes the solution of the \rev{constrained} Riemann problem.
Section~\ref{sec:CMC} contains the proof of the existence result for the Cauchy problem; the proof is divided into four different subsections.
Finally an appendix with technical lemmas concludes the
paper.

\section{Basic Properties and the Riemann Problem}
\label{sec:basic-assumptions}
In this section we recall basic properties of the Two-Phase model
and the Riemann problem both in the classical case
and in presence of flux constraints.
For a detailed description of~(\ref{eq:2-phase}) we refer
to~\cite{ColomboMarcelliniRascle}.
The free and congested phases are described by the sets
\begin{align*}
  F
  & = 
  \left\{
    (\rho,w) \in [0,R] \times [w_{\min}, w_{\max}]
    \colon v(\rho, \rho w) = V_{\max}
  \right\},
  \\
  C
  & = 
  \left\{
    (\rho,w) \in [0,R] \times [w_{\min}, w_{\max}]
    \colon v(\rho, \rho w) = w \, \psi(\rho)
  \right\}\,,
\end{align*}
represented in Figure~\ref{fig:phases}.
Here we assume the following conditions. 
\begin{enumerate}[label=\textbf{(H-\arabic*)}, ref=\textup{\textbf{(H-\arabic*)}}, align=left]
\item \label{Hyp:H1} $R, w_{\min}, w_{\max}, V_{\max}$ are positive
  constants, with $V_{\max} < w_{\min} < w_{\max}$; $R$ is the maximal
  possible density, typically $R=1$.

\item \label{Hyp:H2} $\psi \in \C{2} \left( [0,R];[0,1]\right)$ is such that
  {$\psi(0) = 1$}, $\psi(R) = 0$, and, for every $\rho \in [0,R]$,
  $c_\psi \le -\psi'(\rho) \le C_\psi$,
  $\frac{d^2\ }{d\rho^2} \left( \rho\, \psi(\rho) \right) \le 0$ for
  suitable constants $0 < c_\psi < C_\psi$.

\item\label{Hyp:H3} Waves of the first family in the congested phase C have
  negative speed.
  More precisely, we assume that there exists a positive constant
  $\bar \lambda$ such that $\lambda_1 \left(\rho, w\right) \le - \bar \lambda$,
  where $\lambda_1 \left(\rho, w\right) = w\left(\rho\psi'(\rho)
    + \psi(\rho)\right)$
  is the first eigenvalue of the Jacobian matrix of the
  flux.


\item\label{Hyp:H4}
  There exist $L_F > 0$ and
  $F_{\alpha,1} \in \C1\left([w_{\min}, w_{\max}]; \reali^+\right)$
  satisfying:
  \begin{enumerate}
  \item $F_\alpha\left(w, \sigma\right) = F_{\alpha,1}(w) \left(V_{\max} -
      \sigma\right)$ for every $w \in [w_{\min}, w_{\max}]$
    and $\sigma \in [0, V_{\max}]$,
    where $F_\alpha$ is the function appearing in~\eqref{eq:RPM};

  \item the inequality
    \begin{equation*}
      \modulo{F_{\alpha, 1}(w_1) - F_{\alpha, 1}(w_2)}
      \le L_F \modulo{w_1 - w_2}
    \end{equation*}
    holds for every $w_1, w_2 \in [w_{\min}, w_{\max}]$;
    
  \item $\psi\left(F_{\alpha, 1}(w_{\max})\right) > \frac{V_{\max}}{w_{\min}}$;
    
  \item $F_{\alpha, 1}'(w) \ge 0$ for every $w \in [w_{\min}, w_{\max}]$.
  \end{enumerate}
\end{enumerate}


\begin{figure}
  \centering
  \begin{tikzpicture}[line cap=round,line join=round,x=1.cm,y=1.cm]

    \draw[<->] (.5, 4.8) -- (.5, .2) -- (5.5, .2);
    \node[anchor = west, inner sep = 0] at (0.6, 4.8) {$\rho v$};
    \node[anchor = west, inner sep = 0] at (5.6, .4) {$\rho$};
    \draw[color=red, fill] (2., 0.2 + 2.5 * 2 - 2.5 * 0.5)
    to [out = -20, in = 100] (5., 0.2)
    to [out = 120, in = -20] (1.5, 0.2 + 2.5 * 1.5 - 2.5 * 0.5)
    -- (2., 0.2 + 2.5 * 2 - 2.5 * 0.5);
    \draw[domain=.5:2., smooth, variable=\x, dgreen, samples = 2,
    line width = 1.3pt] plot (\x, {0.2 + 2.5*\x - 2.5*0.5} );
    \draw[line width = 1.1pt] (2., 0.2 + 2.5 * 2 - 2.5 * 0.5)
    to [out = -20, in = 100] (5., 0.2);
    \draw[line width = 1.1pt] (1.5, 0.2 + 2.5 * 1.5 - 2.5 * 0.5)
    to [out = -20, in = 120] (5., 0.2);
    
    \node[anchor = south, inner sep = 0] at (1., 2.) {$F$};
    \node[inner sep = 0] at (2.6, 3.) {$C$};
    \node[anchor = north, inner sep = 0] at (5., .1) {$R$};

    \draw[<->] (.5 + 7, 4.8) -- (.5 + 7, .2) -- (5.5 + 7, .2);
    \node[anchor = west, inner sep = 0] at (0.6 + 7, 4.8) {$\rho v$};
    \node[anchor = west, inner sep = 0] at (5.6 + 7, .4) {$\rho$};
    \draw[domain=.5:2., smooth, variable=\x, samples = 2]
    plot (\x + 7, {0.2 + 2.5*\x - 2.5*0.5} );
    \draw (2.+7, 0.2 + 2.5 * 2 - 2.5 * 0.5) to [out = -20, in = 100]
    (5.+7, 0.2);
    \draw (1.5+7, 0.2 + 2.5 * 1.5 - 2.5 * 0.5) to [out = -20, in = 120]
    (5.+7, 0.2);
    \node[anchor = north, inner sep = 0] at (5.+7, .1) {$R$};

    \draw (0.5 + 7, 1.7) -- (4.5+7, 3.5);
    \draw (1.7+7, 0.2 + 2.5 * 1.7 - 2.5 * 0.5)
    to [out = -20, in = 115] (5.+7, 0.2);
    \draw[dashed] (1.22+7, 2.03) -- (1.22+7, 0.2);
    \draw[dashed] (2.75+7, 2.73) -- (2.75+7, 0.2);

    \draw[dashed] (8.5, 2.7) -- (8.5, .2);

    \node[anchor = north, inner sep = 0] at (8.6, .05) {$\rho_c$};

    \node[anchor = north, inner sep = 0] at (1.1+7, .1) {$\check \rho$};

    \node[anchor = north, inner sep = 0] at (2.75+7, .1) {$\hat \rho$};
    \node[anchor = east, inner sep = 0] at (7.4, 1.7) {\small
      $F_\alpha(w, \sigma)$};

  \end{tikzpicture}

  \caption{Left, the free phase $F$ (in green) and the congested phase $C$
    (in red) in the coordinates $(\rho, \rho v)$.
    Right, the construction of $\check \rho$, $\hat \rho$, and $\rho_c$.}
  \label{fig:phases}
\end{figure}
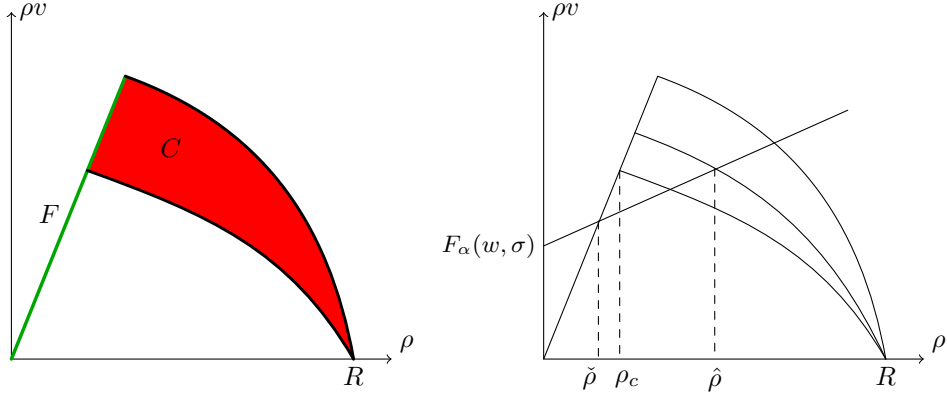

Note that assumptions~\ref{Hyp:H1}-\ref{Hyp:H2} imply that
there exists a unique value $\rho_c \in (0, R)$ such that
$\psi(\rho_c) = \frac{V_{\max}}{w_{\min}}$; see~\Cref{fig:phases} right.

\begin{remark}
  \label{rmk:velocity}
  In the congested phase, the variable $w$ is constant along the curves
  of the first family.
  Similarly, the variable $v$ is constant along the curves of the
  second family. In particular, this implies that $v$ and $w$ are Riemann
  invariants. Thus,
  system~\eqref{eq:2-phase} can be represented alternatively through the
  couple $(v, w)$.
\end{remark}

For $w \in [w_{\min}, w_{\max}]$ and $\sigma \in [0, V_{\max}]$,
define the function
\begin{equation*}
  \label{eq:phi}
  \begin{array}{rcc}
    \varphi_{w, \sigma}:[0, R]
    & \longrightarrow
    & \reali
    \\
    \rho
    & \longmapsto
    &
      F_\alpha(w, \sigma) + \rho \sigma
  \end{array}
\end{equation*}
and the unique densities $\check \rho = \check \rho\left(w, \sigma\right)$
and $\hat \rho = \hat \rho\left(w, \sigma\right)$ respectively as the solutions
to
\begin{equation*}
  \varphi_{w, \sigma}\left(\check\rho\right) = \check\rho V_{\max}
\end{equation*}
and to
\begin{equation*}
  \varphi_{w, \sigma}\left(\hat\rho\right) = w \hat\rho \psi(\hat\rho).
\end{equation*}
Moreover
\begin{equation}
  \label{eq:check-rho}
  \check \rho \left(w, \sigma\right) = \frac{F_\alpha\left(w, \sigma\right)}
  {V_{\max} - \sigma}
  = F_{\alpha, 1}(w) 
\end{equation}
and so, if $w_1 < w_2$ and $\sigma \in [0, V_{\max}]$, then by~\ref{Hyp:H4}
\begin{equation}
  \label{eq:lip-fhi-check}
  \begin{split}
    0
    & \le \check \rho \left(w_2, \sigma\right)
    - \check \rho \left(w_1, \sigma\right)
    \le {L_F} 
    \left(w_2 - w_1\right).
  \end{split}
\end{equation}
By~\ref{Hyp:H4}, the function $\check \rho(w, \sigma)$ is
increasing with respect to $w$ and constant in $\sigma$.
Note also that, by assumption (c) in~\ref{Hyp:H4},
$\left(\check \rho, w\right) \in F \setminus C$
for every $w \in [w_{\min}, w_{\max}]$.
In principle the point $\left(\hat \rho, w\right)$ not necessarily belongs
to $C$; however, in the following, when the flux constraint is effective, then
$\left(\hat \rho, w\right)$ is in $C$; see Figure~\ref{fig:phases}, right.

\subsection{Classical and \rev{Constrained} Riemann Problems}

First, we denote by
\begin{equation*}
  \begin{array}{rccc}
    \mathcal{RS} \colon
    &
      \left(F \cup C\right)^2
    &
      \longrightarrow
    &
      \Lloc1 (\reali; F \cup C)
    \\
    &
      \left(\left(\rho_l, w_l\right), \left(\rho_r, w_r\right)\right)
    &
      \longmapsto
    &
      \left(\mathcal{RS}_\rho, \mathcal{RS}_w\right)
  \end{array}
\end{equation*}
the Riemann solver for the classical Riemann problem
\begin{equation}
  \label{eq:2-phase-RP}
  \left\{
    \begin{array}{l}
      \partial_t \rho +
      \partial_x \left( \rho\, v (\rho,w) \right) = 0,
      \\
      \partial_t (\rho w) +
      \partial_x \left( \rho w\, v (\rho, w) \right) = 0\,,
      \\
      \left(\rho, w\right)(0, x) = \left\{
      \begin{array}{l@{\qquad}l}
        \left(\rho_l, w_l\right),
        & x < 0,
        \\
        \left(\rho_r, w_r\right),
        & x > 0,
      \end{array}
      \right.
    \end{array}
  \right. 
\end{equation}
in the sense that the functions
\begin{equation*}
  \rho(t, x) = \mathcal{RS}_\rho \left(\frac{x}{t}\right),
  \qquad
  w(t, x) = \mathcal{RS}_w \left(\frac{x}{t}\right),
\end{equation*}
provide the solution to the Riemann problem~(\ref{eq:2-phase-RP}).

At the location of the AV, we have to consider a \rev{constrained} Riemann
problem, which depends also on the speed of the AV; see also~\cite{MR4060810}. 
\begin{definition}
  Fix a constant $\bar u \!\in\! \left[0, V_{\max}\right]$ and two states
  $\left(\rho_l, w_l\right)\!, \left(\rho_r, w_r\right) \!\in F \cup C$.
  The Riemann problem for~(\ref{eq:RPM}), related to
  $\bar u$ and to $\left(\rho_l, w_l\right)$ and $\left(\rho_r, w_r\right)$,
  is the following Cauchy problem
  \begin{equation}
    \label{eq:constraint-RP}
    \left\{
      \begin{array}{l}
        \partial_t \rho +
        \partial_x \left( \rho\, v (\rho,w) \right) = 0,
        \\
        \partial_t (\rho w) +
        \partial_x \left( \rho w\, v (\rho, w) \right) = 0,
        \vspace{.2cm}\\
        \dot{y}(t) = \min \left\{\bar u,
        v \left( \rho(t,y(t)+), w(t,y(t)+) \right)\right\},
        \vspace{.2cm}\\
        \rho\left( t,y(t)\right)
        \left( v(\rho(t,y(t)), w(t,y(t))) - \dot{y}(t)\right)
        \leq F_{\alpha}(w, \dot y(t)),
        \vspace{.2cm}\\
        \left(\rho, w\right) (0, x) =
        \left\{
        \begin{array}{l@{\qquad}l}
          \left(\rho_l, w_l\right),
          & x < 0,
          \\
          \left(\rho_r, w_r\right),
          & x > 0,
        \end{array}
            \right.
        \\
        y(0) = 0.
      \end{array}
    \right. 
  \end{equation}
\end{definition}
The solution to the \rev{constrained} Riemann problem~\eqref{eq:constraint-RP}
is given through the \rev{constrained} Riemann solver, introduced
in the following definition.
\begin{definition}
  \label{def:Riemann-solver}
  The \rev{constrained} Riemann solver
  \begin{equation*}
    \begin{array}{rccc}
      \mathcal{RS}^{c} \colon
      &
        \left(F \cup C\right)^2 \times [0, V_{\max}]
      &
        \longrightarrow
      &
        \Lloc1 (\reali; F \cup C) \times [0, V_{\max}]
      \\
      &
        \left(\left(\rho_l, w_l\right), \left(\rho_r, w_r\right), \bar u\right)
      &
        \longmapsto
      &
        \left(\left(\mathcal{RS}^c_{\rho}, \mathcal{RS}^c_{w}\right),
        \boldsymbol{u}\right)
    \end{array}
  \end{equation*}
  is a function representing a solution to the \rev{constrained} Riemann
  problem~(\ref{eq:constraint-RP}), in the sense that the functions
  \begin{equation*}
    \rho\left(t, x\right) = \mathcal{RS}^c_{\rho}\left(\frac{x}{t}\right),
    \qquad
    w\left(t, x\right) = \mathcal{RS}^c_{w}\left(\frac{x}{t}\right),
    \qquad
    y(t) = \boldsymbol{u}\, t
  \end{equation*}
  are a solution to~(\ref{eq:constraint-RP}).
\end{definition}

Denoting by $f_1\left(\rho, w\right) = \rho v \left(\rho, w\right)$,
the construction of $\mathcal{RS}^c$ is as follows:
\begin{enumerate}
\item if
  $f_{1}(\mathcal{RS}((\rho_{l},w_{l}), (\rho_{r},w_{r}))(\bar u))
  \!\leq\! \varphi_{w_l, \bar u} (\mathcal{RS}_{\rho}((\rho_{l},w_{l}),
  (\rho_{r},w_{r}))(\bar u))$, then
  \begin{equation*}
    \begin{array}{@{}l@{}}
      \displaystyle
      \mathcal{RS}^{c}_{\rho}
      \left( (\rho_{l},w_{l}), (\rho_{r},w_{r}), \bar u\right)
      \left(\frac{x}{t}\right)
      =
      \mathcal{RS}_{\rho}
      \left( (\rho_{l},w_{l}), (\rho_{r},w_{r})\right) \left(\frac{x}{t}\right),
      \\
      \displaystyle
      \mathcal{RS}^{c}_{w}
      \left( (\rho_{l},w_{l}), (\rho_{r},w_{r}), \bar u\right) \left(\frac{x}{t}
      \right)
      =
      \mathcal{RS}_{w}
      \left( (\rho_{l},w_{l}), (\rho_{r},w_{r})\right) \left(\frac{x}{t}\right),
      \\
      \boldsymbol{u} = \min \left\{\bar u, v \left(\mathcal{RS}
      \left(\left(\rho_l, w_l\right), \left(\rho_r, w_r\right)\right)
      \left(\bar u +\right)\right)\right\};
    \end{array}
  \end{equation*}
  
\item if
  $f_{1}(\mathcal{RS}((\rho_{l},w_{l}), (\rho_{r},w_{r}))(\bar u))
  \!>\! \varphi_{w_l, \bar u} (\mathcal{RS}_{\rho}((\rho_{l},w_{l}),
  (\rho_{r},w_{r}))(\bar u))$, then
  \begin{equation*}
    \begin{array}{@{}l@{}}
      \displaystyle
      \mathcal{RS}^{c}_{\rho}
      \left( (\rho_{l},w_{l}), (\rho_{r},w_{r}), \bar u\right)
      \!\!\left(\frac{x}{t}\right)
      \!=\!
      \left\{
      \!\!\!
      \begin{array}{ll}
        \mathcal{RS}_{\rho}\!
        \left( (\rho_{l},w_{l}), (\hat \rho,w_{l})\right)
        \left(\frac{x}{t}\right),
        &
          \!\frac{x}{t} < \bar u,
        \vspace{.2cm}\\
        \mathcal{RS}_{\rho}\!
        \left( (\check \rho, w_{l}), (\rho_r, w_{r})\right)
        \left(\frac{x}{t}\right),
        &
          \!\frac{x}{t} > \bar u,
      \end{array}\!\!
          \right.
      \\
      \displaystyle
      \mathcal{RS}^{c}_{w}\!
      \left( (\rho_{l},w_{l}), (\rho_{r},w_{r}), \bar u\right)
      \!\!\left(\frac{x}{t}
      \right)
      \!=\!
      \left\{
      \!\!\!
      \begin{array}{ll}
        \mathcal{RS}_{w}\!
        \left( (\rho_{l},w_{l}), (\hat \rho,w_{l})\right)
        \!\left(\frac{x}{t}\right),
        &
          \!\!\frac{x}{t} < \bar u,
        \vspace{.2cm}\\
        \mathcal{RS}_{w}\!
        \left( (\check \rho, w_{l}), (\rho_r, w_{r})\right)
        \!\left(\frac{x}{t}\right),
        &
          \!\!\frac{x}{t} > \bar u,
      \end{array}
          \right.
      \\
      \boldsymbol{u} = \bar u.
    \end{array}
  \end{equation*}
\end{enumerate}

\begin{remark}
  The Riemann solver $\rs^c$ produces a solution to the \rev{constrained} Riemann
  problem~\eqref{eq:constraint-RP} such that the density flux at the AV
  location is below the constraint imposed by such vehicle.
  More precisely, it returns the ``classical'' solution in the case its flux
  is below the threshold, otherwise it produces \rev{a non classical wave}
  connecting two states, which satisfy the flux constraint, traveling
  with speed of the AV.
\end{remark}

It is interesting to note that the \rev{constrained} Riemann solver, introduced
in~\Cref{def:Riemann-solver}, satisfies a consistency property.
\begin{proposition}
  Fix $\bar u \in \left[0, V_{\max}\right]$ and two states
  $\left(\rho_l, w_l\right)\!, \left(\rho_r, w_r\right) \!\in F \cup C$
  and define $\boldsymbol{u}$ as the velocity component of
  $\rs^c\left(\left(\rho_l, w_l\right),
    \left(\rho_r, w_r\right), \bar u\right)$.
  Then
  \begin{equation}
    \label{eq:RCc-consistency}
    \rs^c\left(\left(\rho_l, w_l\right),
      \left(\rho_r, w_r\right), \bar u\right)
    = \rs^c\left(\left(\rho_l, w_l\right),
      \left(\rho_r, w_r\right), {\boldsymbol u}\right).
  \end{equation}
\end{proposition}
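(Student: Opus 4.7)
The approach is a case analysis that mirrors the two cases in the construction of $\rs^c$ given just after \Cref{def:Riemann-solver}. In case~2 of that construction one has $\boldsymbol u = \bar u$ by definition, so~\eqref{eq:RCc-consistency} holds tautologically. In case~1 one has $\boldsymbol u = \min\{\bar u, v(\mathcal{RS}((\rho_l,w_l),(\rho_r,w_r))(\bar u+))\}$; whenever this minimum equals $\bar u$ there is again nothing to prove. Thus the only substantial situation is case~1 with $\bar u > v(\mathcal{RS}(\bar u+))$, where $\boldsymbol u = v(\mathcal{RS}(\bar u+)) < \bar u$, and I would focus on it.

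Set $(\tilde\rho, \tilde w) := \mathcal{RS}((\rho_l,w_l),(\rho_r,w_r))(\bar u+)$. Because the classical solver $\mathcal{RS}$ does not depend on the control, the PDE components on the two sides of~\eqref{eq:RCc-consistency} automatically coincide, provided that feeding $\boldsymbol u$ back into $\rs^c$ still selects case~1. The proof therefore reduces to two checks: \emph{(i)} $\mathcal{RS}(\boldsymbol u+) = (\tilde\rho, \tilde w)$, so that the output-velocity formula of case~1 returns $\min\{\boldsymbol u, v(\tilde\rho, \tilde w)\} = \boldsymbol u$; and \emph{(ii)} the flux inequality $f_1(\mathcal{RS}(\boldsymbol u)) \le \varphi_{w_l, \boldsymbol u}(\mathcal{RS}_\rho(\boldsymbol u))$ continues to hold.

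For (i) I would invoke \Cref{rmk:velocity}: since $v$ is a Riemann invariant for the linearly degenerate second family, the rightmost wave of the classical fan whose state at $\xi = \bar u+$ has velocity $\boldsymbol u < \bar u$ must be a $2$-contact of speed $\boldsymbol u$, to the right of which the constant state $(\tilde\rho, \tilde w) = (\rho_r, w_r)$ fills the region $\xi > \boldsymbol u$. In particular $\mathcal{RS}(\boldsymbol u+) = (\tilde\rho, \tilde w)$.

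For (ii) the key observation is that $v$ is constant across the $2$-contact at $\xi = \boldsymbol u$, so $v(\mathcal{RS}(\boldsymbol u)) = \boldsymbol u$ regardless of which one-sided representative is picked at the jump. Hence $f_1(\mathcal{RS}(\boldsymbol u)) - \boldsymbol u\, \mathcal{RS}_\rho(\boldsymbol u) = \mathcal{RS}_\rho(\boldsymbol u)\bigl(v(\mathcal{RS}(\boldsymbol u)) - \boldsymbol u\bigr) = 0$, whereas $\varphi_{w_l,\boldsymbol u}(\mathcal{RS}_\rho(\boldsymbol u)) - \boldsymbol u\, \mathcal{RS}_\rho(\boldsymbol u) = F_\alpha(w_l, \boldsymbol u) \ge 0$ by~\ref{Hyp:H4}, yielding (ii). The principal obstacle I anticipate is the careful identification of the wave structure at $\xi = \bar u$ in this non-trivial sub-case; once the Riemann-invariant nature of $v$ from \Cref{rmk:velocity} is exploited, the verification collapses to the observation that the relative velocity between the AV and the fluid immediately ahead of it vanishes.
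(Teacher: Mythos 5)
Your proposal is correct and follows essentially the same strategy as the paper's proof: case 2 and the sub-case $\boldsymbol u=\bar u$ are disposed of trivially, and in the remaining sub-case $\boldsymbol u<\bar u$ one uses that the state seen by the AV has $v=\boldsymbol u$, so $f_1-\boldsymbol u\,\rho=\rho\left(v-\boldsymbol u\right)=0\le F_\alpha\left(w_l,\boldsymbol u\right)$ and $\rs^c$ with control $\boldsymbol u$ again selects the classical solution with velocity output $\boldsymbol u$. Your explicit checks (i)--(ii) merely spell out what the paper compresses into ``this permits to conclude''; the only loose point is that the rightmost wave need not be a $2$-contact at speed $\boldsymbol u$ (it may be absent, e.g.\ when the fan is a single $1$-shock), but this does not affect your conclusion that $\mathcal{RS}\left(\boldsymbol u+\right)=\left(\rho_r,w_r\right)$.
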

\begin{proof}
  If $f_{1}(\mathcal{RS}((\rho_{l},w_{l}), (\rho_{r},w_{r}))(\bar u))
  > \varphi_{w_l, \bar u} (\mathcal{RS}_{\rho}((\rho_{l},w_{l}),
  (\rho_{r},w_{r}))(\bar u))$, then
  ${\boldsymbol u} = \bar u$ and so~\eqref{eq:RCc-consistency} clearly holds.

  If $f_{1}(\mathcal{RS}((\rho_{l},w_{l}), (\rho_{r},w_{r}))(\bar u))
  \le \varphi_{w_l, \bar u} (\mathcal{RS}_{\rho}((\rho_{l},w_{l}),
  (\rho_{r},w_{r}))(\bar u))$, then
  \begin{equation*}
    \boldsymbol{u} = \min \left\{\bar u, v \left(\mathcal{RS}
        \left(\left(\rho_l, w_l\right), \left(\rho_r, w_r\right)\right)
        \left(\bar u +\right)\right)\right\}
    \le \bar u.
  \end{equation*}
  Clearly in the case $\boldsymbol{u} = \bar u$, the conclusion easily
  follows. Assume therefore that $\boldsymbol{u} < \bar u$.
  Define by $\left(\rho_m, w_m\right)$ as the middle state (if it exists)
  in the solution
  of the classical Riemann problem with left state $\left(\rho_l, w_l\right)$
  and right state $\left(\rho_r, w_r\right)$, otherwise put
  $\left(\rho_m, w_m\right) = \left(\rho_l, w_l\right)$.
  Then define
  \begin{equation*}
    \left(\bar \rho, \bar w\right) \in
    \left\{\left(\rho_l, w_l\right), \left(\rho_m, w_m\right),
      \left(\rho_r, w_r\right)\right\}
  \end{equation*}
  such that ${\boldsymbol u} = v\left(\bar \rho, \bar w\right)$.
  Note that $\left(\bar \rho, \bar w\right)$ exists by the construction
  in~\Cref{def:Riemann-solver}. Since ${\boldsymbol u} < \bar u \le V_{\max}$,
  then $\left(\bar \rho, \bar w\right) \in C$ and so
  \begin{equation*}
    f_1 \left(\bar \rho, \bar w\right) = {\boldsymbol u} \bar \rho
    < F_\alpha\left(w_l, {\boldsymbol u}\right) + {\boldsymbol u} \bar \rho
    = \varphi_{w_l, {\boldsymbol u}} (\mathcal{RS}_{\rho}((\rho_{l},w_{l}),
  (\rho_{r},w_{r}))(\bar u)).
  \end{equation*}
  This permits to conclude.
\end{proof}

\subsection{Wave Notation}
Below, we list the waves and the notations that we will use in the present
paper.

\begin{itemize}
\item \textsl{First Family Wave} \rev{$1$}: a wave connecting a left state 
  $\left(\rho_l, w_l\right) \in C$ with a right state
  $\left(\rho_r, w_r\right) \in C$ such that
  $w_{l} =w_{r}$.
  
\item \textsl{Second Family Wave} \rev{$2$}: a wave connecting a left state 
  $\left(\rho_l, w_l\right) \in C$ with a right state
  $\left(\rho_r, w_r\right) \in C$ such that
  $v\left(\rho_l, w_l\right) = v\left(\rho_r, w_r\right)$.  
  
\item \textsl{Linear Wave} \rev{$\mathcal{LW}$}:
  a wave connecting two states in the free
  phase.

\item \textsl{Phase Transition Wave} \rev{$\mathcal{PT}$}:
  a wave connecting a left state
  $\left(\rho_l, w_l\right) \in F$ with a right state
  $\left(\rho_r, w_r\right) \in C$ satisfying 
  $w_{l} =w_{r}$.
  
\item \textsl{Fictitious Wave} \fw: a wave denoting the AV
  trajectory without discontinuity in $(\rho, w)$. The notation stands for
  a fictitious wave.

\item \textsl{Non Fictitious Wave} \nfw: a wave denoting the AV trajectory
  with discontinuity in $(\rho, w)$
  \revv{and connecting the left state $\left(\rho_l, w_l\right)$
    with the right state $\left(\rho_r, w_r\right)$ such that
    $w_l = w_r$, $\rho_l = \hat \rho\left(w_l, \sigma\right)$, and
    $\rho_r = \check \rho\left(w_l, \sigma\right)$ for some
    $\sigma \in [0, V_{\max}[$}. The notation stands for a non
  fictitious wave.
  
\item \revv{\textsl{Special Non Fictitious Wave} \snfw: a wave denoting
    the AV trajectory
    with discontinuity in $(\rho, w)$, which is not a \nfw.}
\end{itemize}

\begin{remark}
  We note that the states $(\rho_{l},w_{l})$ and $(\hat{\rho}, w_l)$
  are connected by a possible combination of waves of the first family
  and phase transition waves with speed less than $\bar u$.
  The states $(\check{\rho}, w_l)$ and $(\rho_{r},w_{r})$ are connected
  either by a linear wave or by a possible combination of a phase transition
  and a second family wave with speed greater than $\bar u$.
\end{remark}

\begin{remark}
  \revv{\nfw and \snfw are both waves denoting the AV trajectory.
    In the former case, it is a wave where the flux constraints holds with
    equality, i.e. $\rho(v - \dot y) = F_\alpha$ with $\dot y < V_{\max}$.
    In the latter case, the AV trajectory coincides with a classical
    wave of the Two-Phase model.}
\end{remark}
\section{The Cauchy Problem}
\label{sec:CMC}

In this section we consider the Cauchy problem for the control
problem~(\ref{eq:RPM}) with moving constraint, that is
\begin{equation}
  \label{eq:CP}
  \left\{
    \begin{array}{l}
      \partial_t \rho +
      \partial_x \left( \rho\, v (\rho,w) \right) = 0
      \\
      \partial_t (\rho w) +
      \partial_x \left( \rho w\, v (\rho, w) \right) = 0
      \\
      \dot{y}(t) = \min \left\{u(t), v
      \left( \rho(t,y(t)+), w(t,y(t)+) \right)\right\}
      \\
      \rho\left( t,y(t)\right)
      \left( v(\rho(t,y(t)), w(t,y(t))) - \dot{y}(t)\right)
      \leq F_{\alpha}(w, \dot y(t))  
  \\ 
  \left( \rho,w\right)(0,x)=\left( \rho_{0}(x),w_{0}(x)\right)
  \\
  y(0)=y_{0}\,,
    \end{array}
  \right. 
\end{equation}
with control function $u \in \BV\left(\reali^+; [0, V_{\max}]\right)$,
initial data $\left(\rho_0, w_0\right): \reali \to F \cup C$,
and $y_{0} \in \reali$. Before stating the main result,
we introduce the definition of solution to the \rev{constrained} Cauchy
problem~(\ref{eq:CP}).
\begin{definition}
  \label{def:weak-solution-1}
  The couple
  \begin{displaymath}
    \left( (\rho^{*}, w^{*}),y^{*} \right)  \in \C0 \left( [0, +\infty[;
      \LL1((\reali; F \cup C) \right) \times
    \Wloc{1,\infty} \left( [0, +\infty[; \reali \right) 
  \end{displaymath}
  is a solution to~(\ref{eq:CP}) with control \rev{$u = u(t)$}, if
  \begin{enumerate}
  \item the function $(\rho^{*}, w^{*})$ is a weak solution (see \Cref{rmk:weak-sol-2phase}) to the PDE in~(\ref{eq:CP}), for $(t,x) \in \Omega_-$ and for $(t,x) \in \Omega_+$, where
  \begin{align*}
      \Omega_-
      & = \left\{(t,x) \in (0, +\infty) \times \reali: x < y^*(t) \right\}
      \\
      \Omega_+
      & = \left\{(t,x) \in (0, +\infty) \times \reali: x > y^*(t) \right\};
  \end{align*}

  \item for a.e. $t>0$, the function $x \mapsto (\rho^{*}(t,x), w^{*}(t,x))$ has bounded total variation;

   \item $\left(\rho^{*}(0, x), w^{*}(0, x)\right) = \left(\rho_0(x), w_0(x)\right)$, for a.e. $x \in \reali$;

   \item  the function $y$ is a Caratheodory solution to the
     ODE in~(\ref{eq:CP}), i.e. for a.e. $t \in \reali^{+}$ 
       \begin{equation*}
         y(t) = y_{0} +
         \int_{0}^{t}\min \left\{u(s),
           v \left(s,y(s)+\right)\right\}\,ds\,;
       \end{equation*}

   \item  the constraint is satisfied, in the sense that for a.e. $t \in \reali^{+}$
\begin{equation*}
  \lim_{x \rightarrow y(t)\pm}\,\left(  \rho\left(t, x\right)
    \left( v(\rho(t,x), w(t,x)) - \dot{y}(t)\right)
    - F_{\alpha}\left(w\left(t, x\right), \dot y(t)\right)\right)
  \leq 0\,.
  \end{equation*}
  \end{enumerate}
\end{definition}

\begin{remark}
  Note that the point 4. of \Cref{def:weak-solution-1} is formulated directly
  using the variable $v$, accordingly with \Cref{rmk:velocity}.
\end{remark}

\begin{remark}
  \label{rmk:weak-sol-2phase}
  By~\cite[Remark~5.3]{ColomboMarcelliniRascle},
  a couple $\left(\rho^*, w^*\right)$ is a weak solution to the PDE
  in~\eqref{eq:CP} if and only if the couple $\left(\rho^*, \eta^*\right)$,
  with $\eta^* = \rho^* w^*$,
  is a weak solution to
  \begin{equation*}
    \left\{
      \begin{array}{l}
        \partial_t \rho +
        \partial_x \left( \rho\, v (\rho, \frac{\eta}{\rho}) \right) = 0
        \\
        \partial_t \eta +
        \partial_x \left( \eta\, v (\rho, \frac{\eta}{\rho}) \right) = 0.
      \end{array}
    \right.
  \end{equation*}
\end{remark}

We can now state the main result of the paper:
\begin{theorem}
  \label{thm:cauchy-problem}
  Let assumptions~\textup{\ref{Hyp:H1}}, \textup{\ref{Hyp:H2}},
  \textup{\ref{Hyp:H3}}, and~\ref{Hyp:H4} hold.
  Fix the control function $u \in \BV \left(\reali^+; [0, V_{\max}]
  \right)$ and the initial conditions
  $(\rho_0, w_0) \in \BV (\reali; F \cup C)$ and $y_{0} \in \reali$.
  Then there exists $\left((\rho^{*}, w^{*}),y^{*}\right)$, a solution to~(\ref{eq:CP}) in the sense of Definition~\ref{def:weak-solution-1}.
\end{theorem}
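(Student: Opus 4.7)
The plan is to construct approximate solutions by wave-front tracking for the PDE part of~\eqref{eq:CP} and to pass to the limit by compactness, following the strategy outlined in the introduction. For each $n \in \naturali$ I would fix piecewise constant approximations $(\rho_0^n, w_0^n)$ of the initial data and $u^n$ of the control $u$, with values in finite grids refining as $n \to +\infty$. The $n$-th approximate solution is then built by solving classical Riemann problems with $\rs$ at discontinuities away from the AV, and constrained Riemann problems with $\rs^c$ both at $y^n(t)$ and at each jump time of $u^n$; outgoing rarefactions are replaced, as usual, by fans of small-amplitude jumps. Definition~\ref{def:Riemann-solver} guarantees that these local building blocks are well-defined and automatically respect the flux constraint.

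The decisive step is to establish uniform a priori bounds on the approximating family: a Lipschitz bound with constant $V_{\max}$ for the trajectories $y^n$ (immediate from the ODE), a uniform $\mathbf{L}^\infty$ bound for $(\rho^n, w^n)$ keeping them inside $F \cup C$, a uniform total variation bound, and a uniform bound on the number of waves and interactions. The natural coordinates for these bounds are the Riemann invariants $(v, w)$ highlighted in Remark~\ref{rmk:velocity}, since in these variables the interactions among waves of type $1$, $2$, $\mathcal{LW}$ and $\mathcal{PT}$ admit a standard Glimm-type decreasing functional. Hypothesis~\ref{Hyp:H3} plays the pivotal role of a non-characteristic condition at the AV: since $\lambda_1 \le -\bar\lambda < 0 \le \dot y^n$, first-family wave speeds and the AV speed are strictly separated, which bounds the number of interactions at $y^n$ and permits to control the strength of the $\mathcal{NFW}$ and $\mathcal{SNFW}$ waves emerging from the $\rs^c$ solver.

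Once the uniform estimates are in place, Helly's theorem yields a subsequence $(\rho^n, w^n) \to (\rho^*, w^*)$ in $\Lloc{1}$, while Ascoli--Arzel\`a, applied to the equi-Lipschitz family $\{y^n\}$, provides $y^n \to y^*$ locally uniformly. A standard weak-stability argument for wave-front tracking then shows that $(\rho^*, w^*)$ is a weak solution of the conservation laws on each of $\Omega_-$ and $\Omega_+$ separately, and the non-characteristic property allows to commute the one-sided trace $v(t, y^n(t)+)$ with the limit, so that $y^*$ is a Caratheodory solution of $\dot y = \min\{u(t), v(t, y(t)+)\}$. The flux constraint is inherited in the limit from the pointwise inequality $\rho^n(v^n - \dot y^n) \le F_\alpha(w^n, \dot y^n)$, which holds by construction at every $\rs^c$-step.

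The main obstacle I foresee is the uniform control of $\mathrm{TV}(\rho^n, w^n)$ across interactions with the AV, since $\mathcal{NFW}$ and $\mathcal{SNFW}$ waves can a priori create new oscillation in $\rho$. The remedy I would pursue is a Glimm-type functional augmented by a term weighted by the $\hat\rho - \check\rho$ gap at the AV and by $\mathrm{TV}(u^n)$; the size of these contributions is controlled through~\eqref{eq:check-rho}--\eqref{eq:lip-fhi-check} together with the Lipschitz regularity of $F_{\alpha,1}$ from~\ref{Hyp:H4} and the $\BV$ bound on $u$. The same functional should simultaneously bound the total number of interactions, so that the scheme can be extended globally in time and the arguments above can be applied on any bounded time interval, yielding the required global-in-time solution.
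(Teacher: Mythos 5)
Your outline follows essentially the same route as the paper: wave-front tracking with the constrained Riemann solver $\mathcal{RS}^c$ at the AV and at jump times of the approximate control, a Glimm-type functional in the Riemann invariants $(w,\tilde v)$ with a correction term localized at the AV, Helly's theorem for the PDE part, Ascoli--Arzel\`a for the trajectories, and the non-characteristic condition~\ref{Hyp:H3} to commute the one-sided trace $v(t,y(t)+)$ with the limit in the ODE. Even the remedy you propose for the total variation across AV interactions is close to what the paper does: the paper's functional $\mathcal F_{\tilde v}$ subtracts twice the jump of $\tilde v$ across the AV (rather than weighting by the $\hat\rho-\check\rho$ gap), and its increases occur only at the interactions of Lemma~\ref{NY1} and Lemma~\ref{Control2}, controlled through~\eqref{eq:check-rho}--\eqref{eq:lip-fhi-check}, \ref{Hyp:H4} and $\mathrm{TV}(u)$, exactly the ingredients you list.

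The one step that, as written, would fail is the claim that the flux constraint ``is inherited in the limit from the pointwise inequality'' satisfied by the approximations. One-sided traces along the moving curves $y_\eps$ are not stable under $\Lloc1$ convergence, so the pointwise inequality for $z_\eps$ at $y_\eps$ gives no direct information on $z^*$ at $y^*$; note also that the $w$-trace is genuinely problematic, since the AV curve is non-characteristic only for the first family, not for the second. The paper closes this step by rewriting the trace terms through the divergence theorem on the regions $\{x<y_\eps(t)\}$ and $\{x<y^*(t)\}$ (turning them into interior integrals that do pass to the limit), and then splitting the times into those where $w_\eps(t,y_\eps(t)-)\to w^*(t,y^*(t))$ and those where it stays uniformly away; in the latter case the boundary is characteristic for the second family, so $v_\eps(t,y_\eps(t))\to\dot y^*(t)$ and the constrained quantity tends to zero, which keeps the inequality. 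Some argument of this type is needed to establish point~5 of Definition~\ref{def:weak-solution-1}; with that supplied, the rest of your plan is consistent with the paper's proof (the finiteness of waves and interactions is obtained there by a combinatorial count of which wave types can be created on each side of the AV, rather than by speed separation alone, but this is a matter of execution rather than of approach).
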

The proof is contained in the following subsections. In particular, we construct a sequence of approximate solutions by using the wave-front tracking method, and we prove its convergence.
Throughout the section, we implicitly assume that
hypotheses~\ref{Hyp:H1}--\ref{Hyp:H4} hold.


\subsection{Wave-Front Tracking Approximate Solution}
\label{sse:wft-12}
In this subsection we construct piecewise constant approximations via the wave-front tracking method, which is a set of techniques to obtain approximate solutions to hyperbolic conservation laws in one space dimension. It was first introduced by Dafermos~\cite{MR2574377}, see also~\cite{BressanLectureNotes, MR3443431} for the general theory. 

At first, we give the following definition of an $\eps$-approximate wave-front tracking solution to~\eqref{eq:CP}.

\begin{definition}
  \label{def:epswf}
  Given $\eps > 0$, the map $\left(z_\eps,
    y_{\eps}, u_\eps\right)$ is an $\eps$-approximate
  wave-front tracking solution to~\eqref{eq:CP} if the following conditions
  hold.
  \begin{enumerate}
  \item  $z_\eps = \left(\rho_\eps, w_\eps\right) \in
    \C0 \left( [0, +\infty[; \LL1(\reali; F \cup C) \right)$;
    
  \item  $y_\eps \in \W{1,\infty} \left( [0, +\infty[; \reali\right)$;
    
  \item  $u_\eps \in \BV\left( [0, +\infty[; [0, V_{\max}]\right)$ is
    piecewise constant; 
  
  \item $\left(\rho_{\eps}, w_{\eps}\right)$
    is piecewise constant, with discontinuities along finitely
    many straight lines in $(0, +\infty) \times \reali$.
    Moreover the jumps can be of the first family,
    of the second family, linear waves or phase transition waves;
  
  \item it holds that
    \begin{equation*}
      \left\{
        \begin{array}{l}
          \norma{\left(\rho_{\eps}(0,\cdot),
               w_{\eps}(0,\cdot) \right)
            - \left( \rho_0(\cdot),w_0(\cdot)\right)}_{\LL1 (\reali)}
          <\eps 
          \\
          \tv \, \left(\rho_{\eps}(0,\cdot), w_{\eps}(0,\cdot)
          \right)
          \le \tv \, \left( \rho_0(\cdot),w_0(\cdot)\right)
          \\
          \norma{u_\eps - u}_{\LL1\left(\reali^+\right)} < \eps
          \\
          \tv \left(u_\eps\right) \le \tv \left(u\right)\,;
        \end{array}
      \right.
    \end{equation*}
  \item for a.e. $t \in \reali^{+}$,
    \begin{equation}
      \label{eq:velocity-AV-wft}
      y_{\eps}(t) = y_{0}
      + \int_{0}^{t} \min \left\{u_\eps(s),
        v\left( \rho_\eps(s,y_{\eps}(s)+), w_\eps
          (s,y_{\eps}(s)+)\right)
      \right\}\,ds;
    \end{equation}
\item  the constraint is satisfied,
  in the sense that for a.e. $t \in \reali^{+}$
\begin{equation*}
  \begin{split}
    &
    \lim_{x \rightarrow y_{\eps}(t)\pm}\,\left(
      \rho_\eps\left( t,y_{\eps}(t)+\right)
      \left( v(\rho_\eps(t,y_{\eps}(t)+),
        w_\eps(t,y_{\eps}(t)+)) - \dot{y_{\eps}}(t)\right)\right) (t,x)
    \\
    & \qquad
    \leq F_{\alpha}\left(w_\eps\left(t, y_\eps(t)+\right),
      \dot y_\eps(t)\right)\,.
  \end{split}
\end{equation*}
\end{enumerate}
\end{definition}

We describe here a procedure for constructing a sequence of wave-front
approximate solutions. For every $\nu \in \naturali \setminus \left\{0\right\}$,
we consider the triple $(\rho_{0,\nu}, w_{0,\nu}, u_\nu)$
of piecewise constant functions with a finite number of discontinuities
such that the following conditions hold.
\begin{enumerate}
\item $(\rho_{0,\nu}, w_{0,\nu}):\reali \to F \cup C$
  and $u_\nu: \reali^+ \to [0, V_{\max}]$;

\item the following limits hold
  \begin{align*}
    \lim_{\nu\to+\infty} (\rho_{0,\nu}, w_{0,\nu})  =
    (\rho_{0}, w_{0}) & \qquad \textrm{ in } \LL1(\reali; F \cup C)
    \\
    \lim_{\nu\to+\infty} u_\nu  =
    u & \qquad \textrm{ in } \LL1(\reali^+; [0, V_{\max}]);
  \end{align*}

\item the following inequalities hold
  \begin{align*}
    \tv (\rho_{0,\nu}, w_{0,\nu}) & \le \tv (\rho_{0}, w_{0})
    \\
    \tv \left(u_{\nu}\right) & \le \tv \left(u\right).
  \end{align*}
\end{enumerate}

Next, for every $\nu \in \N \setminus \{0\}$, we proceed with the
following method. At time $t=0$, we solve all the classical Riemann problems for
$x \in \reali$, $x \ne y_0$ and the \rev{constrained} Riemann problem, located at
$y_0$. We approximate every rarefaction wave of the first family with a
rarefaction fan, formed by rarefaction shocks of strength less than
$\frac{1}{\nu}$ traveling with the Rankine-Hugoniot speed.
In this way we construct a piecewise approximate solution
$\left(\rho_\nu, w_\nu, y_\nu\right)$ until the first time at which
two waves interact together, or a wave interacts with the AV, or
$u_\nu$ has a discontinuity.
In the first case, we solve the classical Riemann problem and we prolong
the approximate solution beyond this time.
In the second case or in the third one,
we solve the \rev{constrained} Riemann problem and we prolong
the approximate solution beyond this time.
We repeat this procedure at every interaction times.

\begin{remark}
  \label{rmk:1}
  Slightly changing the velocity of waves,
  \rev{as described in~\cite[Remark~7.1]{BressanLectureNotes}},
  or the discontinuity times of $u_\nu$,
  we may assume that, at every positive time $t$, at most one
  of the following possibilities
  happens:
  \begin{enumerate}
  \item two classical waves (first family wave, second family wave, linear wave,
    and phase transition wave) interact together at a point $x \in \reali$,
    with $x \ne y_\nu(t)$;

  \item a classical wave hits the AV trajectory $y_\nu$;
    
  \item $u_\nu\left(t-\right) \ne u_\nu(t+)$.
  \end{enumerate}
\end{remark}

\begin{remark}
  \label{rmk:no-superposition}
  \revv{The $\mathcal{NFW}$ wave connects two states through a
    non-classical shock with positive speed.}

  \revv{Instead, the \snfw connects two states through a wave with
    positive speed, which can be either a second family wave or a
    phase transition wave $\mathcal{PT}$, or a linear wave
    $\mathcal{LW}$}.

  \revv{It is always possible to construct a wave-front tracking
    approximate solution such that, for $t$ in a right neighborhood of
    $0$, the $\mathcal{SNFW}$ wave is not present.}

  \revv{Moreover, the interaction estimates considered in \Cref{2}
    imply that, at positive times, the $\mathcal{SNFW}$ wave can be
    only superimposed to a linear wave.}
\end{remark}

Given an $\eps$-approximate wave-front tracking solution
$\left(z_\eps, y_\eps, u_\eps\right)$, define, for a.e. $t > 0$, the following functionals
\begin{align}
  \label{eq:funct_F_w}
  \mathcal F_w (t)
  & =
    \sum_{x \in \reali} \modulo{w\left(z_\eps(t, x+)\right)
    - w\left(z_\eps(t, x-)\right)}
  \\
  \nonumber
  \mathcal F_{\tilde v} (t)
  &
    =
    \sum_{x \in \reali} \modulo{\tilde v\left(z_\eps(t, x+)\right)
    - \tilde v\left(z_\eps(t, x-)\right)}
  \\
  \label{eq:funct_F_v}
  & \quad
    - 2 \modulo{\tilde v\left(z_{\eps}\left(t, y_{\eps}(t)+\right)\right)
    - \tilde v\left(z_{\eps}\left(t, y_{\eps}(t)-\right)\right)}
    \\
  \label{eq:w_funct_PT}
  \mathcal F(t) & =  \mathcal F_w (t) + \mathcal F_{\tilde v} (t)\,,
  \\
  \label{eq:numbers-wave-func}
  \mathcal N(t)
                   & = \# \left\{x \in \reali: z_\eps\left(t, x-\right) \ne
                     z_\eps\left(t, x+\right)\right\},
\end{align}
where, we denote by $\tilde v$ the function
\begin{equation*}
  \tilde v(\rho, w) = w \psi(\rho).
\end{equation*}
Moreover the functionals $\mathcal N^-_1(t)$ and $\mathcal N^+_1(t)$
denote respectively the number of discontinuities given by a wave of the
first family at the left, resp. at the right, of $y_\eps(t)$.
Finally, $\mathcal N^-_2(t)$, $\mathcal N^-_2(t)$,
$\mathcal N^-_{\mathcal{PT}}(t)$, $\mathcal N^-_{\mathcal{PT}}(t)$,
$\mathcal N^-_{\mathcal{LW}}(t)$, $\mathcal N^-_{\mathcal{LW}}(t)$
are defined similarly.
Note that the previous functionals may vary only at times $\bar t$,
described in Remark~\ref{rmk:1}.

The functional $\mathcal F(t)$ is composed by $2$ terms. The first
term measures the strength of waves of second family. The second term
measures the strength of waves of first family and of phase transition
waves. Moreover both of the first two terms measure the strength of
linear waves.
\rev{Note that the functional $\mathcal F_{\tilde v}$ contains a term
  which depends on the position of the AV. This is just a technical tool
  for having better interaction estimates; see~\cite{ADR} for a
  similar approach. In general this functional may assume negative values;
  however it is bounded from below,
  since its second term is uniform bounded.}

\subsection{Interaction estimates}
\label{2}

We consider interactions estimates between waves.
We will consider different types of interactions separately.
It is not restrictive to assume that, at any interaction time $t=\bar t$,
exactly one possibility enumerated in Remark~\ref{rmk:1} happens:
either two waves interact, or a wave hits the
AV trajectory, or the control changes. 

We describe wave interactions by the nature of the involved waves, see~\cite{GaravelloPiccoli, g-p-coupling-2012}. For example, if a wave of the second family hits a wave of the first family producing a phase-transition wave, we write \textbf{$2$-$1$/ $\mathcal{PT}$}. Here the symbol ``$/$'' divides the waves before and after the interaction. 

\subsubsection{Collisions between classical waves}

For the classical collision between two waves, we have the following result.

\begin{lemma}
  \label{le:estim-funct-w-interaction-PT}
  Assume that the wave $((\rho^l, w^l), (\rho^m, w^m))$
  interacts with the wave
  $((\rho^m, w^m), (\rho^r, w^r))$ at the point $(\bar t,
  \bar x)$ with $\bar t > 0$ and $\bar x \in \reali$.  Then
  $\mathcal F (\bar t+) \le \mathcal F(\bar t-)$.
  The possible interactions are: \textbf{$2$-$1$/$1$-$2$},
  \textbf{$\mathcal{LW}$-$\mathcal{PT}$/$\mathcal{PT}$-$2$},
  \textbf{$1$-$1$/$1$}, \textbf{$\mathcal{PT}$-$1$/$\mathcal{PT}$}.
  Hence $\mathcal N(\bar t+) \le \mathcal N(\bar t-)$.
\end{lemma}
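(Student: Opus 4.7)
The plan is to perform a case analysis exploiting the Riemann-invariant structure recalled in \Cref{rmk:velocity} ($w$ is constant across $1$-waves and $\mathcal{PT}$-waves, $v$ is constant across $2$-waves) together with the speed ordering implied by assumption~\ref{Hyp:H3} (waves of the first family have negative speed, so in particular slower than $2$-waves, $\mathcal{LW}$, and $\mathcal{PT}$-waves starting from $F$). For interactions away from $y_\eps$, the subtracted term $-2|\tilde v(z_\eps(t,y_\eps(t)+))-\tilde v(z_\eps(t,y_\eps(t)-))|$ in $\mathcal F_{\tilde v}$ is not affected at $\bar t$, so it suffices to control the summation parts of $\mathcal F_w$ and $\mathcal F_{\tilde v}$.

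First, I would enumerate which pairs of incoming waves are even possible. Combinations such as $1$-$2$, $1$-$\mathcal{LW}$, $1$-$\mathcal{PT}$, $\mathcal{LW}$-$\mathcal{LW}$, $2$-$\mathcal{LW}$, $2$-$2$, $\mathcal{PT}$-$\mathcal{PT}$ and $\mathcal{PT}$-$\mathcal{LW}$ are excluded, either because the left wave is slower than the right one or because the intermediate state would lie in an inconsistent phase. Only the four listed pairs $2$-$1$, $1$-$1$, $\mathcal{PT}$-$1$ and $\mathcal{LW}$-$\mathcal{PT}$ survive. For each of them I would solve the Riemann problem at $(\bar t, \bar x)$ between the external states $(\rho^l,w^l)$ and $(\rho^r,w^r)$ via $\mathcal{RS}$: in the $2$-$1$ case the external states both lie in $C$ and one gets a $1$-wave (preserving the left value $w^l$, changing $v$ from $v^l$ to $v^r$) followed by a $2$-wave (preserving $v^r$, changing $w$ from $w^l$ to $w^r=w^m$), hence the output $1$-$2$; in the $1$-$1$ case the common value of $w$ persists and the two $1$-waves merge into a single $1$-wave (possibly a rarefaction fan, treated as a single $1$-wave for this lemma); in the $\mathcal{PT}$-$1$ case the intermediate and right states are in $C$ with the same $w=w^l$, so the Riemann problem reduces to a single $\mathcal{PT}$-wave from $(\rho^l,w^l)\in F$ directly to $(\rho^r,w^l)\in C$; in the $\mathcal{LW}$-$\mathcal{PT}$ case the two external states lie in $F$ and $C$ respectively with different $w$'s, and the Riemann problem is solved by a $\mathcal{PT}$-wave from $(\rho^l,w^l)$ to the point of $C$ with $w=w^l$ and $v=v^r$, followed by a $2$-wave changing $w^l$ into $w^r$ at constant $v$.

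Second, for each of the four cases I would compute the contributions to $\mathcal F_w$ and $\mathcal F_{\tilde v}$ before and after the interaction, using $\tilde v=v$ in $C$ and $\tilde v=w\psi(\rho)$ in general. For $2$-$1$/$1$-$2$ the contribution $|w^m-w^l|$ to $\mathcal F_w$ and $|v^r-v^l|$ to $\mathcal F_{\tilde v}$ simply change side, giving equality. For $\mathcal{PT}$-$1$/$\mathcal{PT}$, writing out $\tilde v$ at the three states and using $w^l=w^m$ one checks that the cancellation is exact. For $\mathcal{LW}$-$\mathcal{PT}$/$\mathcal{PT}$-$2$ the same bookkeeping, with the $\mathcal{LW}$ contribution redistributed between the new $\mathcal{PT}$ and $2$ waves at constant $v$, yields equality as well. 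The only strict inequality may occur in $1$-$1$/$1$, where the triangle inequality $|v^r-v^l|\le|v^m-v^l|+|v^r-v^m|$ together with $w$ being constant throughout implies $\mathcal F(\bar t+)\le \mathcal F(\bar t-)$.

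Finally, counting the outgoing fronts in each case ($2$-$1$/$1$-$2$: $2\to 2$; $\mathcal{LW}$-$\mathcal{PT}$/$\mathcal{PT}$-$2$: $2\to 2$; $1$-$1$/$1$ and $\mathcal{PT}$-$1$/$\mathcal{PT}$: $2\to 1$) gives $\mathcal N(\bar t+)\le\mathcal N(\bar t-)$. The main obstacle is the exhaustive verification of the four-case enumeration, and in particular a careful bookkeeping of $\mathcal F_{\tilde v}$ across a $\mathcal{PT}$-wave, where the left state lies in $F$ with $v=V_{\max}$ but $\tilde v=w\psi(\rho)\ge V_{\max}$, and across a $\mathcal{LW}$, where $\tilde v\ne v$; once this is carefully set up, each case reduces to an algebraic identity plus at most one application of the triangle inequality.
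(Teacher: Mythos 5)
Your proposal is sound in substance, but note that the paper itself does not prove this lemma: it delegates it entirely to \cite[Lemma~3.4]{Marcellini2}, where the interaction estimates for the Two-Phase model are established by precisely the kind of enumeration of admissible incoming pairs and Riemann-invariant bookkeeping you describe. So your argument reconstructs, rather than deviates from, the intended proof. Two points deserve more care. First, the claim of exact cancellation in the cases $\mathcal{PT}$-$1$/$\mathcal{PT}$ and $\mathcal{LW}$-$\mathcal{PT}$/$\mathcal{PT}$-$2$ is not quite accurate once one computes with $\tilde v=w\psi(\rho)$ rather than $v$: for $\mathcal{LW}$-$\mathcal{PT}$/$\mathcal{PT}$-$2$ one finds $\Delta\mathcal F_{\tilde v}=(\tilde v^l-\tilde v^m)-\modulo{\tilde v^l-\tilde v^m}\le 0$, which is zero only when $\tilde v^l\ge\tilde v^m$, and in $\mathcal{PT}$-$1$/$\mathcal{PT}$ a strict decrease occurs when the incoming $1$-front is a rarefaction front; in every case the sign is the right one, so the conclusion $\mathcal F(\bar t+)\le\mathcal F(\bar t-)$ is unaffected, but the statement ``yields equality as well'' should be weakened to ``yields $\le 0$''. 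Second, the exclusion of the other incoming pairs should be argued with the labelling conventions on $F\cap C$ made explicit: a $2$-wave with $v=V_{\max}$ connects states of the free phase and is thus a linear wave, and a discontinuity whose left state lies on $F\cap C$ and whose right state is in $C$ with the same $w$ is a $1$-wave, not a $\mathcal{PT}$; it is this relabelling, combined with your speed ordering (negative speed for $1$-waves by \ref{Hyp:H3}, speed $v$ for $2$-waves, $V_{\max}$ for $\mathcal{LW}$, and $\mathcal{PT}$ speed below both $V_{\max}$ and the adjacent $2$-wave speed), that reduces the list to the four cases of the lemma. With these adjustments your case-by-case computation of $\mathcal F_w$, $\mathcal F_{\tilde v}$ (the AV term being untouched since $\bar x\ne y_\eps(\bar t)$) and the front count $2\to 2$ or $2\to 1$ giving $\mathcal N(\bar t+)\le\mathcal N(\bar t-)$ is correct.
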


See~\cite[Lemma 3.4.]{Marcellini2} for the proof.

\subsubsection{Collisions with a fictitious wave}
In this part, we assume that a wave $((\rho^l, w^l), (\rho^r, w^r))$
interacts at a time $\bar t$ with the AV and that there is no discontinuity
at the position of the AV before time $\bar t$.
For simplicity, we introduce also the following notations:
\begin{equation}
  \label{eq:simplicity-1}
  \begin{array}{r@{\,=\,}l@{\quad}r@{\,=\,}l@{\quad}r@{\,=\,}l@{\quad}
    r@{\,=\,}l}
    u^-
    & u\left(\bar t\right),
    & u^+
    & u^-,
    & \check \rho
    & \check \rho ( u^-,w^l),
    & \hat \rho
    & \hat \rho ( u^-,w^l),
    \vspace{.2cm}\\
    v^l
    & \tilde v(\rho^l, w^l),
    & v^r
    & \tilde v(\rho^r, w^r),
    & \check{v}
    & \tilde v(\check{\rho}, {w^l}),
    & \hat{v}
    & \tilde v(\hat{\rho}, {w^l}),
  \end{array}
\end{equation}
and we denote with $\xi^-$ and $\xi^+$ respectively the speed of the
AV before and after the interaction (see~\eqref{eq:velocity-AV-wft}),
and define $\Delta \xi = \xi^+ - \xi^-$.

We have the following results.
\begin{lemma}
  \label{Y1}
  Assume that the second family wave $((\rho^l, w^l), (\rho^r, w^r))$
  interacts from the left
  with the \fw wave at the point $(\bar t,\bar x)$ with $\bar t > 0$
  and $\bar x \in \reali$. We have the following cases:
  \begin{enumerate}
  \item No new wave is produced.
    Then $\Delta \mathcal F_w (\bar t)
    = \Delta \mathcal F_{\tilde v} (\bar t) = \Delta \mathcal N (\bar t) = 0$,
    so that $\Delta \mathcal F (\bar t) = 0$.
    Finally $\modulo{\Delta \xi} \le \modulo{v^l - v^r}$.

  \item The interaction is of type
    \textbf{$2$-$\mathcal{FW}$/$1$-$\mathcal{NFW}$-$\mathcal{PT}$-$2$}.
    Then we deduce that $\Delta \mathcal F_w (\bar t) = \Delta \mathcal F_{\tilde v} (\bar t)
    = \Delta \mathcal F (\bar t) = 0$,
    and $\Delta \mathcal N\left(\bar t\right) = 3$.
    Finally $\modulo{\Delta \xi} = 0$.
  \end{enumerate}
\end{lemma}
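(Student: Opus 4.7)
The plan is to treat the two cases of the lemma separately, by applying the constrained Riemann solver $\mathcal{RS}^c$ of~\Cref{def:Riemann-solver} at $(\bar t, \bar x)$ with left state $(\rho^l, w^l)$ and right state $(\rho^r, w^r)$. Since the AV rides on an \fw just before $\bar t$, the right state also coincides with the state immediately to the right of the AV at time $\bar t^-$. The incoming $2$-wave catches up with the AV precisely when $\xi^- = u^- < v^r = v^l$, a condition that I will use throughout.

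In Case~1 the classical flux at $u^-$ is at most the constraint, so $\mathcal{RS}^c$ returns the classical Riemann solution, namely a single $2$-wave that simply crosses to the right of the AV. No discontinuity is created or destroyed, each wave's $w$-jump and $\tilde v$-jump is preserved, and $\xi^+ = \min\{u^-, v^l\} = u^- = \xi^-$; hence $\Delta \mathcal N = \Delta \mathcal F_w = \Delta \mathcal F_{\tilde v} = 0$ and $|\Delta \xi| = 0 = |v^l - v^r|$. In Case~2 the constraint is strictly exceeded, and $\mathcal{RS}^c$ outputs four waves: a first family wave $(\rho^l, w^l) \to (\hat \rho, w^l)$ on the left of the AV; a \nfw $(\hat \rho, w^l) \to (\check \rho, w^l)$ at the AV, with speed $u^-$; on the right of the AV a phase transition $(\check \rho, w^l) \to (\tilde \rho, w^l)$, where $\tilde \rho \in C$ is defined by $w^l \psi(\tilde \rho) = v^r$; and finally a $2$-wave $(\tilde \rho, w^l) \to (\rho^r, w^r)$. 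This gives $\Delta \mathcal N = 4 - 1 = 3$; only the trailing $2$-wave carries a $w$-jump, matching the pre-interaction $|w^l - w^r|$, so $\Delta \mathcal F_w = 0$; and the new AV speed is $\xi^+ = u^- = \xi^-$.

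The one nontrivial step is the computation of $\Delta \mathcal F_{\tilde v}$ in Case~2. Using $\hat v, v^r \le V_{\max} \le w^l \psi(\check \rho)$, the sum of the four $\tilde v$-jumps minus twice the jump carried by the \nfw (which sits at the AV) telescopes to $|v^l - \hat v| + \hat v - v^l$, and this vanishes precisely when $v^l \ge \hat v$. To pin down this sign I would set $h(\rho) = \rho(w^l \psi(\rho) - u^-)$; by~\ref{Hyp:H3} we have $h'(\rho) = \lambda_1(\rho, w^l) - u^- \le -\bar \lambda - u^- < 0$ on $C$, so $h$ is strictly decreasing there. Since $h(\hat \rho) = F_\alpha$ by definition of $\hat \rho$, while the activation of the constraint at $\rho^l$ is exactly $h(\rho^l) > F_\alpha$, monotonicity forces $\rho^l < \hat \rho$, and hence $v^l = w^l \psi(\rho^l) > w^l \psi(\hat \rho) = \hat v$. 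The principal obstacle is exactly this sign-determination, since the rest of the bookkeeping is mechanical once the correct direction of $v^l - \hat v$ is fixed.
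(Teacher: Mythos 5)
Your proposal is correct and follows essentially the same route as the paper: apply $\mathcal{RS}^c$ at the interaction, split according to whether $\rho^l v^l$ exceeds $\varphi_{w^l,u^-}(\rho^l)$, list the outgoing waves (your middle state $\tilde\rho$ is exactly $\rho^l$ since $v^r=v^l$), and telescope the functionals, using that before the interaction the AV carries no jump. The only difference is that you explicitly justify the sign $v^l>\hat v$ via the strict monotonicity of $\rho\mapsto\rho\left(w^l\psi(\rho)-u^-\right)$ from~\ref{Hyp:H3}, a point the paper states without proof; this is a harmless (indeed welcome) addition, not a different approach.
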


\begin{proof}
  We use the notation in~(\ref{eq:simplicity-1}).
  Since before $\bar t$ there is no discontinuity at the position of the AV,
  then $\rho^r v^r \leq \varphi_{w^r, u^-}\left(\rho^r\right)$.
  At time $\bar t$, we need to consider the Riemann solver
  \begin{equation*}
    \mathcal{RS}^c \left((\rho^l, w^l),
      \left(\rho^r, w^r\right), u^-\right)
  \end{equation*}
  and the AV enters the region with state $(\rho^l, w^l)$.
  We have two cases.
  \begin{enumerate}    
  \item $\rho^l v^l \leq \varphi_{w^l, u^-}(\rho^l)$.
    In this case, the second family wave crosses the AV trajectory
    and no new wave is created. Thus,
    for the functionals~(\ref{eq:funct_F_w})-~(\ref{eq:numbers-wave-func}),
    we have: 
    \begin{equation*}
      \Delta \mathcal F_w (\bar t) = \Delta \mathcal F_{\tilde v} (\bar t)
      = \Delta \mathcal F (\bar t)
      = \Delta \mathcal N (\bar t)
      = 0\,.
    \end{equation*}
    Moreover $\xi^- = \min\{v^r, u^-\}$, $\xi^+ = \min\{v^l, u^-\}$,
    and so $\modulo{\Delta \xi} \le \modulo{v^l - v^r}$.

  \item $\rho^l v^l > \varphi_{w^l, u^-}(\rho^l)$.
    In this case, the second family wave crosses the AV trajectory producing
    a first family shock wave $((\rho^l, w^l), (\hat{\rho}, w^l))$,
    a $\mathcal{NFW}$ wave $((\hat{\rho}, w^l), (\check{\rho}, w^l))$,
    a phase transition wave with positive speed
    $((\check{\rho}, w^l), (\rho^l, w^l))$, and
    a second family wave $((\rho^l, w^l), (\rho^r, w^r))$, so that
    $\Delta \mathcal N(\bar t) = 3$.
    For the functional~(\ref{eq:funct_F_w}) we have
    $\Delta \mathcal F_w (\bar t) = 0$.

    For the functional~(\ref{eq:funct_F_v}), since $v^l = v^r$ because
    the interacting wave is a second family wave and
    since $v^l > \hat{v}$, $\check{v}> \hat{v}$ and $\check{v}>v^l$,
    we have that
    \begin{align*}
      \Delta \mathcal F_{\tilde v} (\bar t)
      &
        = \modulo{v^l - \hat{v}} + \modulo{\check{v} - \hat{v}}
        + \modulo{\check{v} - v^l}
        + \modulo{v^l - v^r} - 2 \modulo{\check{v} - \hat{v}}
        - \modulo{v^l - v^r}\\
      & = 0\,.
    \end{align*}
    Here we have $\xi^- = \min\{v^r, u^-\} = u^-$, $\xi^+ = u^-$, and so
    $\Delta \xi = 0$.
  \end{enumerate}
\end{proof}

\begin{lemma}
  \label{Y2}
  Assume that the phase transition wave
  $((\rho^l, w^l), (\rho^r, w^r))$ interacts from the left with
  the \fw wave at the point $(\bar t,\bar x)$ with $\bar t > 0$
  and $\bar x \in \reali$.
  Then the interacting phase transition wave has positive speed and
  no new wave is produced. Moreover $\Delta \mathcal F_w (\bar t)
  = \Delta \mathcal F_{\tilde v} (\bar t)
  =\Delta \mathcal F(\bar t) = 0$ and $\Delta \mathcal N(\bar t) = 0$.
  Finally $\modulo{\Delta \xi} \le \modulo{v^l - v^r}$.
\end{lemma}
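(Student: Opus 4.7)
My plan is to analyze the constrained Riemann problem $\mathcal{RS}^c((\rho^l,w^l),(\rho^r,w^r),u^-)$ that arises at the interaction time $\bar t$ and to show that its classical branch is selected, so the incoming phase transition wave crosses the AV without generating new waves. Throughout I will use the notation of~\eqref{eq:simplicity-1} together with the standing facts that $w^l = w^r$, $(\rho^l,w^l)\in F$ and $(\rho^r,w^r)\in C$ force $v^l = V_{\max}$, $v^r = w^r \psi(\rho^r) < V_{\max}$ and $\rho^l < \rho^r$.

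First I would establish the positive-speed claim. Rankine-Hugoniot yields the wave speed $s = (\rho^r v^r - \rho^l V_{\max})/(\rho^r - \rho^l)$, and a one-line rearrangement using $V_{\max} \ge v^r$ gives $s \le v^r$. The hypothesis that the PT wave interacts with the AV from the left means it catches up, i.e.\ $s > \xi^- = \min\{u^-, v^r\}$; combined with $s \le v^r$ this rules out $\xi^- = v^r$, so necessarily $\xi^- = u^- < v^r$ and $s > u^- > 0$.

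Next I would verify the flux-constraint check inside $\mathcal{RS}^c$. Because $s > u^-$, the classical Riemann solver evaluated at $x/t = u^-$ returns the state $(\rho^l,w^l)$, so by Definition~\ref{def:Riemann-solver} the test reduces to $\rho^l \le F_{\alpha,1}(w^l)$. To obtain this I would combine the pre-interaction constraint $\rho^r(v^r - u^-) \le F_{\alpha,1}(w^l)(V_{\max} - u^-)$ at the AV (available because, before $\bar t$, the AV carries no discontinuity, $w^l = w^r$, and $\xi^- = u^-$) with the strict inequality $\rho^l(V_{\max} - u^-) < \rho^r(v^r - u^-)$ obtained by clearing denominators in $s > u^-$. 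Dividing by $V_{\max} - u^- > 0$ then yields the desired bound, hence the classical branch applies and the PT wave crosses intact.

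The functional updates follow at once. The total number of fronts is unchanged, so $\Delta\mathcal{N}(\bar t) = 0$; since $w^l = w^r$ we get $\Delta\mathcal{F}_w(\bar t) = 0$; the contribution $|v^l - v^r|$ of the PT wave to $\mathcal{F}_{\tilde v}$ is preserved, and the corrective term at the AV location stays zero both before and after $\bar t$, giving $\Delta\mathcal{F}_{\tilde v}(\bar t) = \Delta\mathcal{F}(\bar t) = 0$. Finally, after the crossing the state immediately to the right of the AV is $(\rho^l,w^l)\in F$ with speed $V_{\max}$, so $\xi^+ = \min\{u^-, V_{\max}\} = u^- = \xi^-$, whence $|\Delta\xi| = 0 \le |v^l - v^r|$. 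The main technical obstacle I expect is precisely the check $\rho^l \le F_{\alpha,1}(w^l)$: neither the pre-interaction constraint nor the speed bound $s > u^-$ suffice individually, and the strict sign coming from $s > u^-$ is what glues them together.
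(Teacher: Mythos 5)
Your proposal is correct and follows essentially the same route as the paper's proof: the paper likewise uses the pre-interaction constraint $\rho^r v^r \le \varphi_{w^r,u^-}(\rho^r)$ together with the fact that the $\mathcal{PT}$ speed exceeds the AV speed to conclude $\rho^l v \le \varphi_{w^l,u^-}(\rho^l)$, so the classical branch of $\mathcal{RS}^c$ applies and the wave crosses; you merely make the ``clearing denominators'' step explicit, which the paper leaves implicit. Only a cosmetic slip: since $u^-$ may vanish, write $s > u^- \ge 0$ rather than $s > u^- > 0$ (positivity of $s$ still follows), and note that your conclusion $\Delta\xi = 0$ is slightly sharper than, but consistent with, the paper's bound $\modulo{\Delta \xi} \le \modulo{v^l - v^r}$.
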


\begin{proof}
  We use the notation in~(\ref{eq:simplicity-1}).
  Since the interacting wave is a phase transition, then $w^l = w^r$.
  Before $\bar t$ there is no discontinuity at the position of the AV,
  then $\rho^r v^r \leq \varphi_{w^r, u^-}(\rho^r)$.
  
  The speed of the phase transition
  is bigger than that of the fictitious wave. In particular it is positive and
  $\rho^l v^l \leq \varphi_{w^l, u^-}(\rho^l)$.
  Thus, at time $\bar t$, the solution of
  \begin{equation*}
    \mathcal{RS}^c \left((\rho^l, w^l),
      \left(\rho^r, w^r\right), u^-\right)
  \end{equation*}
  is classical, in the sense that the phase transition wave crosses
  the AV trajectory and no new wave is created. Thus,
  for the functionals~(\ref{eq:funct_F_w})-(\ref{eq:numbers-wave-func}),
  we have: 
  \begin{equation*}
    \Delta \mathcal F_w (\bar t) = \Delta \mathcal F_{\tilde v} (\bar t)
    = \Delta \mathcal F (\bar t)
    = \Delta \mathcal N (\bar t)
    = 0\,.
  \end{equation*}
  Here $\xi^- = \min\{v^r, u^-\}$, $\xi^+ = \min\{V_{\max}, u^-\} = u^-$,
  and so either $\modulo{\Delta \xi} = 0$ or $\modulo{\Delta \xi} = u^- - v^r
  \le \modulo{v^l - v^r}$.
\end{proof}

\begin{lemma}
  \label{Y2Bis}
  Assume that the \fw wave interacts from the left with the phase transition wave $((\rho^l, w^l), (\rho^r, w^r))$ at the point $(\bar t,\bar x)$ with $\bar t > 0$ and $\bar x \in \reali$.
  Then no new wave is produced. Moreover $\Delta \mathcal F_w (\bar t)
  = \Delta \mathcal F_{\tilde v} (\bar t)
  =\Delta \mathcal F(\bar t) = 0$ and $\Delta \mathcal N(\bar t) = 0$.
  Finally $\modulo{\Delta \xi} \le \modulo{v^l - v^r}$.
\end{lemma}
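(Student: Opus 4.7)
The plan is to mirror the proof of~\Cref{Y2}, exploiting that here the AV sits on the free-phase side of the PT wave so that the $\mathcal{FW}$ hypothesis automatically restricts $\rho^l$ to the region where the classical Riemann solver applies as the AV crosses through.

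First I would record the standing facts: a PT wave satisfies $w^l = w^r$ and $v^l = V_{\max}$. Because the AV trajectory is a $\mathcal{FW}$ immediately before $\bar t$, the state on both sides of the AV coincides with $(\rho^l, w^l)$ and hence $\xi^- = \min\{u^-, V_{\max}\} = u^-$. The flux constraint at the AV location before $\bar t$, namely $\rho^l(V_{\max} - u^-) \le F_{\alpha,1}(w^l)(V_{\max} - u^-)$, then yields $\rho^l \le \check\rho$ when $u^- < V_{\max}$ (the case $u^- = V_{\max}$ is trivial since $\varphi_{w^l, V_{\max}}(\rho^r) = \rho^r V_{\max} \ge \rho^r v^r$). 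Next, the kinematic condition for the $\mathcal{FW}$ to overtake the PT wave from the left reads $u^- > (\rho^r v^r - \rho^l V_{\max})/(\rho^r - \rho^l)$, where $\rho^l < \rho^r$ since the PT connects the free to the congested branch. Rearranging and using $\rho^l \le \check\rho$,
\[
\rho^r v^r < u^- \rho^r + (V_{\max} - u^-) \rho^l \le u^- \rho^r + (V_{\max} - u^-) \check\rho = \varphi_{w^l, u^-}(\rho^r).
\]
Therefore Case~1 of~\Cref{def:Riemann-solver} applies to the constrained Riemann problem with states $(\rho^l, w^l)$, $(\rho^r, w^r)$ and control $u^-$: the PT wave simply crosses the AV trajectory without producing any new wave.

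The functional balance is then immediate. Since $w^l = w^r$, the PT wave contributes nothing to $\mathcal{F}_w$, so $\Delta \mathcal{F}_w = 0$; it contributes $|v^l - v^r|$ to $\mathcal{F}_{\tilde v}$ both before (on the right of the AV) and after (on the left), with no AV-located discontinuity to subtract in either configuration, hence $\Delta \mathcal{F}_{\tilde v} = 0$ and $\Delta \mathcal{F} = 0$; the wave count is unchanged so $\Delta \mathcal{N} = 0$. For the AV speed, $\xi^+ = \min\{u^-, v^r\}$ yields $|\Delta \xi| = \max\{0, u^- - v^r\} \le V_{\max} - v^r = |v^l - v^r|$.

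The only delicate step is verifying that one remains in Case~1 of~\Cref{def:Riemann-solver}, and the whole argument hinges on the bound $\rho^l \le \check\rho$ extracted from the $\mathcal{FW}$ assumption: without it the analysis would split into subcases analogous to~\Cref{Y1}(2) and a non-classical wave would generically appear.
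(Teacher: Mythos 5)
Your proposal is correct and follows essentially the same route as the paper: the pre-interaction constraint at the AV plus the overtaking condition force $\rho^r v^r \le \varphi_{w^l,u^-}(\rho^r)$, so Case~1 of \Cref{def:Riemann-solver} gives a purely classical crossing, after which the functional and $\Delta\xi$ estimates are immediate; you merely spell out the Rankine--Hugoniot computation that the paper compresses into ``the speed of the phase transition is smaller than that of the fictitious wave, in particular \dots''. The only cosmetic slip is writing $v^l = V_{\max}$: in the notation of~\eqref{eq:simplicity-1}, $v^l = \tilde v(\rho^l,w^l) \ge V_{\max}$ for a left state in $F$, so your final chain should read $\modulo{\Delta\xi} \le V_{\max} - v^r \le \modulo{v^l - v^r}$, which does not affect the conclusion.
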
 

\begin{proof} 
  We use the notation in~(\ref{eq:simplicity-1}).
  Since the interacting wave is a phase transition, then $w^l = w^r$.
  Before $\bar t$ there is no discontinuity at the position of the AV,
  then $\rho^l v^l \leq \varphi_{w^l, u^-}(\rho^l)$.
  
  The speed of the phase transition
  is smaller than that of the fictitious wave. In particular
  $\rho^r v^r \leq \varphi_{w^l, u^-}(\rho^r)$.
  Thus, at time $\bar t$, the solution of
  \begin{equation*}
    \mathcal{RS}^c \left((\rho^l, w^l),
      \left(\rho^r, w^r\right), u^-\right)
  \end{equation*}
  is classical, in the sense that the phase transition wave crosses
  the AV trajectory and no new wave is created. Thus,
  for the functionals~(\ref{eq:funct_F_w})-(\ref{eq:numbers-wave-func}),
  we have: 
  \begin{equation*}
    \Delta \mathcal F_w (\bar t) = \Delta \mathcal F_{\tilde v} (\bar t)
    = \Delta \mathcal F (\bar t)
    = \Delta \mathcal N (\bar t)
    = 0\,.
  \end{equation*}
  Here $\xi^- = \min\{V_{\max}, u^-\} = u^-$, $\xi^+ = \min\{v^r, u^-\}$,
  and so either $\modulo{\Delta \xi} = 0$ or $\modulo{\Delta \xi} = u^- - v^r
  \le \modulo{v^l - v^r}$.
\end{proof} 

\begin{lemma}
  \label{Y3}
  Assume that the linear wave $((\rho^l, w^l), (\rho^r, w^r))$ interacts with the \fw wave at the point $(\bar t,\bar x)$ with $\bar t > 0$ and $\bar x \in \reali$.
  We have the two different cases:
  \begin{enumerate}
  \item No new wave is produced. Then $\Delta \mathcal F_w (\bar t) = \Delta \mathcal F_{\tilde v} (\bar t)=
    \Delta \mathcal F (\bar t)=\Delta \mathcal N (\bar t) = 0$.
    Finally $\Delta \xi = 0$.

  \item The interaction is
    \textbf{$\mathcal{LW}$-\fw/$\mathcal{PT}$-\nfw-$\mathcal{LW}$}.
    Then we get $\Delta \mathcal F_w (\bar t)= 0$,
    $\Delta \mathcal F(\bar t) = \Delta \mathcal F_{\tilde v} (\bar t)
    \le 0$, and
    $\Delta \mathcal N(\bar t) = 2$.
    Finally $\Delta \xi = 0$.
  \end{enumerate} 
\end{lemma}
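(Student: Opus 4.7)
The plan is to apply the constrained Riemann solver $\mathcal{RS}^c$ from \Cref{def:Riemann-solver} at the interaction point with left state $(\rho^l,w^l)$, right state $(\rho^r,w^r)$, and control $u^-$; both states lie in $F$, so in the notation~\eqref{eq:simplicity-1} one has $v^l=v^r=V_{\max}$. The hypothesis that a \fw (and not a \nfw) sits at the AV immediately before $\bar t$ translates into $\rho^r V_{\max}\le \varphi_{w^r,u^-}(\rho^r)$, and the branch of $\mathcal{RS}^c$ is selected by the analogous inequality $\rho^l V_{\max}\le \varphi_{w^l,u^-}(\rho^l)$. Since $\mathcal{RS}_\rho$ of a pure linear wave is identically $\rho^l$ on the cone $x/t<V_{\max}$ and $u^-<V_{\max}$, this criterion collapses to $\rho^l\le F_{\alpha,1}(w^l)=\check\rho$.

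If $\rho^l\le\check\rho$ (Case~1), the classical branch applies: the linear wave simply crosses the AV trajectory, the AV enters state $(\rho^l,w^l)$ on both sides and keeps speed $\xi^+=\min\{u^-,V_{\max}\}=u^-$. The number of discontinuities and the total $w$- and $\tilde v$-variations are merely relocated, and the AV correction term in~\eqref{eq:funct_F_v} stays $0$; hence $\Delta\mathcal N=\Delta\mathcal F_w=\Delta\mathcal F_{\tilde v}=\Delta\mathcal F=\Delta\xi=0$.

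If $\rho^l>\check\rho$ (Case~2), the non-classical branch of $\mathcal{RS}^c$ produces a phase transition $((\rho^l,w^l),(\hat\rho,w^l))$ with Rankine-Hugoniot speed less than $u^-$, a \nfw wave $((\hat\rho,w^l),(\check\rho,w^l))$ traveling at $u^-$, and a linear wave $((\check\rho,w^l),(\rho^r,w^r))$ traveling at $V_{\max}$. Hence $\Delta\mathcal N=+2$; only the last wave carries a jump in $w$, and that jump equals the one carried by the incoming linear wave, so $\Delta\mathcal F_w=0$. The post-interaction AV speed is the prescribed $\boldsymbol u=u^-$, so $\Delta\xi=0$.

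The only real computation is $\Delta\mathcal F_{\tilde v}\le 0$. Summing the three new jumps in $\tilde v$, subtracting twice the \nfw contribution (the correction term at the AV), and subtracting the pre-interaction linear-wave jump, together with $(\rho^l,w^l)\in F$ yielding $w^l\psi(\rho^l)\ge V_{\max}\ge\hat v$ and $\hat v\le\check v$, the change reduces to
\begin{equation*}
  \Delta\mathcal F_{\tilde v}=\bigl(w^l\psi(\rho^l)-\check v\bigr)+\modulo{\check v-w^r\psi(\rho^r)}-\modulo{w^l\psi(\rho^l)-w^r\psi(\rho^r)}.
\end{equation*}
The key sign observation is that $\rho^l>\check\rho$ together with $\psi$ decreasing (assumption~\ref{Hyp:H2}) forces $w^l\psi(\rho^l)<\check v$, so the first term equals $-\modulo{w^l\psi(\rho^l)-\check v}$; the triangle inequality $\modulo{\check v-w^r\psi(\rho^r)}\le\modulo{\check v-w^l\psi(\rho^l)}+\modulo{w^l\psi(\rho^l)-w^r\psi(\rho^r)}$ then yields $\Delta\mathcal F_{\tilde v}\le 0$. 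Identifying this sign via the monotonicity of $\psi$ is the only non-routine step; everything else is bookkeeping.
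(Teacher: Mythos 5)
Your proof is correct and follows essentially the same route as the paper: apply $\mathcal{RS}^c$ with control $u^-$, split according to whether $\rho^l V_{\max}\le\varphi_{w^l,u^-}(\rho^l)$, and in the non-classical case reduce $\Delta\mathcal F_{\tilde v}$ to $(v^l-\check v)+\modulo{\check v-v^r}-\modulo{v^l-v^r}\le 0$ using $\hat v\le V_{\max}\le v^l<\check v$ (the paper's ``$\check v>v^l>\hat v$'') plus the triangle inequality. The only blemish is your claim that $v^l=v^r=V_{\max}$ \emph{in the notation of~\eqref{eq:simplicity-1}}, where $v^l=\tilde v(\rho^l,w^l)=w^l\psi(\rho^l)\ge V_{\max}$ for free-phase states rather than equal to it; since your actual computation of $\Delta\mathcal F_{\tilde v}$ works with $w\psi(\rho)$ throughout, this slip is harmless.
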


\begin{proof}
  We use the notation in~(\ref{eq:simplicity-1}).
  Before $\bar t$ there is no discontinuity at the position of the AV,
  then $\rho^r v^r \leq \varphi_{w^r, u^-}\left(\rho^r\right)$.
  At time $\bar t$, we need to consider
  \begin{equation*}
    \mathcal{RS}^c \left((\rho^l, w^l),
      \left(\rho^r, w^r\right), u^-\right).
  \end{equation*}
  We have two different cases.
  \begin{enumerate}
  \item $\rho^l v^l \leq \varphi_{w^l, u^-}(\rho^l)$.
    In this case, the linear wave crosses the bus trajectory and
    no new wave is created.
    Thus, for the
    functionals~(\ref{eq:funct_F_w})-(\ref{eq:numbers-wave-func}), we have: 
    \begin{equation*}
      \Delta \mathcal F_w (\bar t) = \Delta \mathcal F_{\tilde v} (\bar t)
      = \Delta \mathcal F (\bar t)
      = \Delta \mathcal N (\bar t)
      = 0\,.
    \end{equation*}
    Here $\xi^- = \xi^+ = \min\{V_{\max}, u^-\} = u^-$ and so
    $\Delta \xi = 0$.

  \item $\rho^l v^l > \varphi_{w^l, u^-}(\rho^l)$.
    In this case the linear wave crosses the AV trajectory
    producing a phase transition wave
    $((\rho^l, w^l), (\hat{\rho}, w^l))$,
    a \nfw wave $((\hat{\rho}, {w^l}), (\check{\rho}, w^l))$,
    and a linear wave $((\check{\rho}, {w^l}), (\rho^r, w^r))$.
    For the
    functionals~(\ref{eq:funct_F_w})-(\ref{eq:numbers-wave-func}),
    since $\check v > v^l > \hat v$, we have:
    \begin{align*}
      \Delta \mathcal F_w (\bar t)
      & = \modulo{w^l - w^r} - \modulo{w^l - w^r} = 0\,,
      \\
      \Delta \mathcal F_{\tilde v} (\bar t)
      & = \modulo{v^l - \hat{v}} + \modulo{\check{v}-\hat{v}}
        +\modulo{\check{v}-v^r} - 2\modulo{\check{v}-\hat{v}} - \modulo{v^l-v^r}
      \\
      & = \left(v^l - \hat v\right) - \left(\check v - \hat v\right)
        + \modulo{\check v - v^r} - \modulo{v^l - v^r}
      \\
      & \le 0
      \\
      \Delta \mathcal N (\bar t)
      & = 2\,.
    \end{align*}
    Here $\xi^- = \min\{V_{\max}, u^-\} = u^-$, $\xi^+ = u^-$ and so
    $\Delta \xi = 0$.
  \end{enumerate}
\end{proof}

\begin{lemma}
  \label{Y4}
  Assume that the \fw wave interacts with the first family wave
  $((\rho^l, w^l), (\rho^r, w^r))$ at the point $(\bar t,\bar x)$ with
  $\bar t > 0$ and $\bar x \in \reali$. We have the following cases:
  \begin{enumerate}
  \item No new wave is produced.
    Then $\Delta \mathcal F_w (\bar t)= \Delta \mathcal F_{\tilde v} (\bar t)
    = \Delta \mathcal F(\bar t) = \Delta \mathcal N(\bar t) =0$.
    Finally $\modulo{\Delta \xi} \le \modulo{v^l - v^r}$.

  \item The interaction is \textbf{\fw-$1$/$1$-\nfw-$\mathcal{PT}$}.
    Then $\Delta \mathcal F_w (\bar t)= 0$,
    $\Delta \mathcal F(\bar t) = \rev{\Delta} \mathcal F_{\tilde v} (\bar t)
    < 0$, and
    $\Delta \mathcal N(\bar t) = 2$.
    Finally $\modulo{\Delta \xi} \le v^r - v^l$.
  \end{enumerate} 
\end{lemma}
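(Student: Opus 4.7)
The plan is to follow the case-by-case structure of~\Cref{Y1,Y2,Y2Bis,Y3}. Using the notation~\eqref{eq:simplicity-1}, the first step is to observe that because the interacting wave has $w^l = w^r$, every wave appearing before or after the interaction carries no jump in $w$; hence $\Delta\mathcal F_w(\bar t) = 0$ in both cases. The pre-interaction assumption that the AV is a \fw wave means that the state on both sides of the AV equals $(\rho^l, w^l)$, and in particular $\rho^l v^l \leq \varphi_{w^l, u^-}(\rho^l)$. The two cases correspond precisely to whether the constraint is satisfied or violated by $(\rho^r, w^r)$ in the constrained Riemann problem $\rs^c((\rho^l, w^l), (\rho^r, w^r), u^-)$.

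For Case~1, when $\rho^r v^r \leq \varphi_{w^r, u^-}(\rho^r)$, the incoming first family wave simply crosses the AV: the contribution $|v^l-v^r|$ to $\mathcal F_{\tilde v}$ is unchanged (and the AV jump remains zero), no new wave is produced, and all the $\Delta$ quantities vanish. For the speed, $\xi^- = \min\{u^-, v^l\}$ and $\xi^+ = \min\{u^-, v^r\}$, so the $1$-Lipschitzness of $\min$ yields $|\Delta \xi| \leq |v^l - v^r|$.

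For Case~2, when $\rho^r v^r > \varphi_{w^r, u^-}(\rho^r)$, \Cref{def:Riemann-solver} produces a first family wave $((\rho^l, w^l), (\hat\rho, w^l))$ with negative speed by~\ref{Hyp:H3}, an \nfw $((\hat\rho, w^l), (\check\rho, w^l))$ at the AV, and a phase transition $((\check\rho, w^l), (\rho^r, w^r))$, giving the labelling \fw-$1$/$1$-\nfw-$\mathcal{PT}$ and $\Delta\mathcal N(\bar t) = 2$. The central step is to establish the ordering
\begin{equation*}
v^l \;\leq\; \hat v \;<\; v^r \;\leq\; V_{\max} \;<\; \check v.
\end{equation*}
With this in hand, the sum defining $\Delta\mathcal F_{\tilde v}(\bar t)$ telescopes to $-2(v^r - \hat v)<0$, yielding the sign claim on $\Delta \mathcal F$. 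For the AV speed, the \nfw convention forces $\xi^+ = u^-$; rewriting the strict violation as $\rho^r(v^r - u^-) > F_{\alpha,1}(w^r)(V_{\max} - u^-) > 0$ shows $v^r > u^-$, whence $|\Delta \xi| = u^- - \min\{u^-, v^l\} \leq v^r - v^l$.

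I expect the main obstacle to be the velocity ordering in Case~2, which amounts to locating $\rho^l$ and $\rho^r$ relative to $\hat\rho$. Concavity of $\rho \mapsto \rho w^l\psi(\rho)$ on $C$ (from~\ref{Hyp:H2}) together with $(\check\rho, w^l) \in F \setminus C$ (a consequence of~\ref{Hyp:H4}(c)) localizes the violation region $\{\rho \in C : \rho w^l\psi(\rho) > \varphi_{w^l,u^-}(\rho)\}$ as an interval whose right endpoint is $\hat\rho$. The pre-interaction constraint then forces $\rho^l \geq \hat\rho$ (so $v^l \leq \hat v$), while the post-interaction violation forces $\rho^r < \hat\rho$ (so $v^r > \hat v$). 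Once this geometric fact is secured, the remaining estimates reduce to the direct telescoping computation sketched above.
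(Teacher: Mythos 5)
Your proposal is correct and takes essentially the same route as the paper: the same split into the two cases of the constrained Riemann solver $\rs^c\left((\rho^l,w^l),(\rho^r,w^r),u^-\right)$, the same telescoping of the functional giving $\Delta\mathcal F_{\tilde v}(\bar t)=2(\hat v - v^r)<0$, and the same bounds on $\Delta\xi$. The only difference is that you actually justify the velocity ordering $v^l\le\hat v<v^r<\check v$ (localizing the violation set $\{\rho\in C:\rho w^l\psi(\rho)>\varphi_{w^l,u^-}(\rho)\}$ as an interval with right endpoint $\hat\rho$ and using that both states of a first-family wave lie in $C$), and you derive $v^r>u^-$ from the violated constraint, whereas the paper asserts these orderings without proof; this is added detail, not a different argument.
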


\begin{proof}
  We use the notation in~(\ref{eq:simplicity-1}).
  Before $\bar t$ there is no discontinuity at the position of the AV,
  then $\rho^l v^l \leq \varphi_{w^l, u^-}(\rho^l)$.
  Moreover $w^l = w^r$ and
  at time $\bar t$, we need to consider
  \begin{equation*}
    \mathcal{RS}^c \left((\rho^l, w^l),
      \left(\rho^r, w^r\right), u^-\right).
  \end{equation*}
  We have two different cases.
  \begin{enumerate}
  \item $\rho^r v^r \leq \varphi_{w^l, u^-}\left(\rho^r\right)$.
    In this case, the first family wave crosses the AV trajectory and
    no new wave is created.
    Thus, for the
    functionals~(\ref{eq:funct_F_w})-(\ref{eq:numbers-wave-func}), we have: 
    \begin{equation*}
      \Delta \mathcal F_w (\bar t) = \Delta \mathcal F_{\tilde v} (\bar t)
      = \Delta \mathcal F (\bar t)
      = \Delta \mathcal N (\bar t)
      = 0\,.
    \end{equation*}
    Here $\xi^- = \min\{v^l, u^-\}$, $\xi^+ = \min\{v^r, u^-\}$, and so
    $\modulo{\Delta \xi} \le \modulo{v^l - v^r}$.

  \item $\rho^r v^r > \varphi_{w^l, u^-}\left(\rho^r\right)$.
    In this case, the interaction
    produces a first family wave
    $((\rho^l, w^l), (\hat{\rho}, {w^l}))$, a \nfw wave
    $((\hat{\rho},{w^l}),(\check{\rho},{w^l}))$,
    and a phase transition wave $((\check{\rho}, w^l), (\rho^r, w^l))$,
    with positive speed.
    Thus, for the
    functionals~(\ref{eq:funct_F_w})-(\ref{eq:numbers-wave-func}),
    since $\check v > v^r > \hat v > v^l$, we have: 
    \begin{align*}
      \Delta \mathcal F_w (\bar t)
      & = \modulo{w^l - w^r} - \modulo{w^l - w^r} = 0\,,
      \\
      \Delta \mathcal F_{\tilde v} (\bar t)
      & = \modulo{v^l - \hat{v}} + \modulo{\check{v}-\hat{v}}
        +\modulo{\check{v}-v^r} - 2 \modulo{\check{v}-\hat{v}} -\modulo{v^l-v^r}
      \\
      & = \left(\hat v - v^l\right) - \left(\check v - \hat v\right)
        + \left(\check v - v^r\right) - \left(v^r - v^l\right)
      \\
      & = 2 {\left(\hat v - v^r\right)} < 0\,,
      \\
      \Delta \mathcal N (\bar t)
      & = 2\,.
    \end{align*}
    Here $\xi^- = \min\{v^l, u^-\}$, $\xi^+ = u^-$. Since $v^l < u^- < v^r$,
    then
    $\modulo{\Delta \xi} = \modulo{v^l - u^-} = u^- - v^l \le v^r - v^l$.
  \end{enumerate}
\end{proof}

\begin{remark}
  We observe that the following interaction
  can not happen:
\begin{itemize}
\item A \fw wave can not interact from the left with a second family
  wave. Indeed the velocity of cars coincides with that of the second family
  and so equation~\eqref{eq:Control-ODE} prevents such interaction.
\end{itemize}
\end{remark}

\subsubsection{Collisions from the left with a non fictitious wave}

In this part, we assume that a wave $((\rho^l, w^l), (\rho^r, w^r))$
interacts from the left at a time $\bar t$ with the AV and that there is a discontinuity
at the position of the AV before time $\bar t$.
For simplicity, we introduce also the following notations:
\begin{equation}
  \label{eq:simplicity-2}
  \begin{array}{r@{\,=\,}l@{\quad}r@{\,=\,}l@{\quad}r@{\,=\,}l@{\quad}
    r@{\,=\,}l}
    u^-
    & u\left(\bar t\right),
    & u^+
    & u^-,
    &     v^l
    & \tilde v(\rho^l, w^l),
    & v^r
    & \tilde v(\rho^r, w^r),
    \vspace{.2cm}\\
    \check{\rho}^l
    & \check{\rho}(u^-,w^l),
    & \check{\rho}^r
    & \check{\rho}(u^-,w^r),
    & \hat{\rho}^l
    & \hat{\rho}(u^-,w^l),
    & \hat{\rho}^r
    & \hat{\rho}(u^-,w^r),
    \vspace{.2cm}\\
    \check{v}^l
    & \tilde{v}(\check{\rho}^l,w^l),
    & \check{v}^r
    & \tilde{v}(\check{\rho}^r,w^r),
    & \hat{v}^l
    & \hat{\rho}(\hat{\rho}^l,w^l),
    & \hat{v}^r
    & \hat{\rho}(\hat{\rho}^r,w^r).
  \end{array}
\end{equation}
and we denote with $\xi^-$ and $\xi^+$ respectively the speed of the
AV before and after the interaction (see~\eqref{eq:velocity-AV-wft}),
and define $\Delta \xi = \xi^+ - \xi^-$.
 
\begin{lemma}
  \label{NY1}
  Assume that the second family wave
  $\left( (\rho^l, w^l), (\rho^r, w^r)\right)$ interacts from the left with the
  \nfw wave at the point
  $(\bar t,\bar x)$ with $\bar t > 0$ and $\bar x \in \reali$.
  The interaction is \textbf{$2$-\nfw/$1$-\nfw-$\mathcal{LW}$}.
    Moreover $\Delta \mathcal F_w (\bar t)=0$,
    $\Delta \mathcal F_{\tilde v} (\bar t)
    \le 2 \left(C_\psi w^l L_F \frac{1}{w_{\min} \bar \lambda}
          + {w^l C_\psi L_F}
          + 1\right) \modulo{w^l - w^r}$,
    $\Delta \mathcal F (\bar t)=\Delta \mathcal F_{\tilde v} (\bar t)$, and
    $\Delta \mathcal N (\bar t) \le \frac{V_{\max}}{\nu}$,
    where $\tilde C_F$ is a suitable positive constant depending only
    on~\ref{Hyp:H4}. Finally $\Delta \xi = 0$.
  \end{lemma}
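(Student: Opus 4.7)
The plan is to analyze the constrained Riemann problem produced by the interaction and to compute the change in each functional by enumerating the states before and after. Before $\bar t$, the presence of the $\mathcal{NFW}$ at the AV forces the state just to the left of the AV to be $(\hat\rho^r, w^r)$ and the state just to the right to be $(\check\rho^r, w^r)$, both indexed by the AV speed $u^-$. Since the incoming $2$-wave has right state $(\rho^r, w^r)$ adjacent to the $\mathcal{NFW}$, consistency forces $\rho^r = \hat\rho^r$. At time $\bar t$ one must therefore solve $\rs^c((\rho^l, w^l), (\check\rho^r, w^r), u^-)$. Since the $2$-wave preserves $v$, so that $v^l = v^r$, and since the pre-interaction flux constraint was tight, the classical $\rs$-value at $u^-$ still violates the threshold $\varphi_{w^l,u^-}$; hence the nonclassical case of~\Cref{def:Riemann-solver} applies. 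The solution therefore splits into a first family wave from $(\rho^l, w^l)$ to $(\hat\rho^l, w^l)$ (possibly a rarefaction approximated by small shocks, producing $\Delta\mathcal N(\bar t) \le V_{\max}/\nu$), a new $\mathcal{NFW}$ at speed $u^-$ from $(\hat\rho^l, w^l)$ to $(\check\rho^l, w^l)$, and a linear wave from $(\check\rho^l, w^l)$ to $(\check\rho^r, w^r)$ (both endpoints lie in $F$ by the observation following~\ref{Hyp:H4}(c)). Since the AV still moves at $u^-$, $\Delta\xi = 0$, confirming the interaction type $2$-$\mathcal{NFW}$/$1$-$\mathcal{NFW}$-$\mathcal{LW}$.

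For $\Delta\mathcal{F}_w(\bar t)$ the bookkeeping is immediate: the pre-interaction contribution $|w^l - w^r|$ from the $2$-wave plus $0$ from the $\mathcal{NFW}$ cancels with the post-interaction $0$ at the $1$-wave, $0$ at the new $\mathcal{NFW}$, and $|w^l - w^r|$ at the $\mathcal{LW}$, giving $\Delta\mathcal{F}_w(\bar t) = 0$. For $\Delta\mathcal{F}_{\tilde v}(\bar t)$, using $v^l = v^r$ and the definition~\eqref{eq:funct_F_v}, direct computation yields
\[
\Delta\mathcal{F}_{\tilde v}(\bar t) = |v^l - \hat v^l| - |\hat v^l - \check v^l| + |\check v^l - \check v^r| + |\hat v^r - \check v^r|.
\]
I would bound $|\check v^l - \check v^r|$ by $(1 + w^l C_\psi L_F)|w^l - w^r|$ using~\eqref{eq:check-rho}, \eqref{eq:lip-fhi-check}, and the Lipschitz control on $\psi$ in~\ref{Hyp:H2}. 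The remaining terms, which compare $(\hat v, \check v)$ computed with reference $w = w^r$ versus $w = w^l$, are estimated via implicit differentiation of the defining relation $\varphi_{w,u^-}(\hat\rho) = w\hat\rho\psi(\hat\rho)$: its Jacobian with respect to $\hat\rho$ equals $u^- - \lambda_1(\hat\rho, w)$, which by~\ref{Hyp:H3} is bounded below by $\bar\lambda$. This accounts for the factor $1/(w_{\min}\bar\lambda)$ in the statement. Combining these estimates with $\Delta\mathcal{F}_w = 0$ gives the claimed bound on $\Delta\mathcal{F}_{\tilde v}$ and the equality $\Delta\mathcal{F} = \Delta\mathcal{F}_{\tilde v}$.

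The delicate point is the sign analysis that precedes the Lipschitz step: one must verify, from the geometry of Figure~\ref{fig:phases} and the definitions of $\hat\rho$ and $\check\rho$, that $\hat v^l < v^l < \check v^l$ along the first-family curve at level $w^l$, so that $|v^l - \hat v^l| - |\hat v^l - \check v^l|$ collapses to a signed difference which, after combining with $|\hat v^r - \check v^r|$, reduces to Lipschitz comparisons of $\hat v$ and $\check v$ between $w^r$ and $w^l$. Once these signs are pinned down, the estimate is mechanical; the only nontrivial analytic input is the implicit-function computation yielding the $1/\bar\lambda$ factor, which is a direct consequence of the non-characteristic assumption~\ref{Hyp:H3}.
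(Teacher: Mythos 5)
Your overall route coincides with the paper's: the same constrained Riemann problem $\mathcal{RS}^c\left((\rho^l,w^l),(\check\rho^r,w^r),u^-\right)$, the same outgoing pattern $1$-\nfw-$\mathcal{LW}$, the same bookkeeping giving $\Delta\mathcal F_w=0$ and the identity for $\Delta\mathcal F_{\tilde v}$, and the same analytic inputs (Lipschitz bound for $\check v$ via~\eqref{eq:check-rho}--\eqref{eq:lip-fhi-check}, and the non-characteristic bound $u^--\lambda_1\ge\bar\lambda$ from~\ref{Hyp:H3}, which is exactly \Cref{tec_lem}). However, two steps as you state them do not hold up. First, your justification that the non-classical case of \Cref{def:Riemann-solver} applies --- ``since the pre-interaction flux constraint was tight, the classical $\mathcal{RS}$-value at $u^-$ still violates the threshold $\varphi_{w^l,u^-}$'' --- is not a valid implication: the old constraint is tight with respect to $F_\alpha(w^r,u^-)$, while the threshold to be violated involves $F_\alpha(w^l,u^-)$ and the relevant state is the \emph{middle} state of the new classical Riemann problem, not the old trace. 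The paper's argument identifies this middle state $(\rho^m,w^l)\in F\cap C$ and checks $\rho^m V_{\max}>\varphi_{w^l,u^-}(\rho^m)$, which rests on assumption (c) of~\ref{Hyp:H4} (equivalently, on the fact that $\check\rho(w)=F_{\alpha,1}(w)$ lies strictly inside the free phase, so $F_{\alpha,1}(w^l)<\rho_c\le\rho^m$). Without this the interaction type itself is not established, so this is a genuine gap.

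Second, the ``delicate sign analysis'' you propose, namely verifying $\hat v^l<v^l<\check v^l$, is not true in general: the first-family wave produced at $\bar t$ can be a rarefaction (this is precisely what the paper asserts), in which case $\rho^l>\hat\rho^l$ and hence $v^l<\hat v^l$; the sign of $v^l-\hat v^l$ depends on the comparison between $w^l$ and $w^r$ and cannot be pinned down. The paper's estimate deliberately avoids this: it only uses $\check v^l>\hat v^l$, $\check v^r>v^r$ and $v^l=v^r=\hat v^r$, keeps $\modulo{v^l-\hat v^l}$ as an absolute value, and bounds $\Delta\mathcal F_{\tilde v}(\bar t)\le 2\modulo{v^l-\hat v^l}+2\modulo{\check v^l-\check v^r}$ before applying the Lipschitz estimates (note $\modulo{v^l-\hat v^l}=\modulo{\hat v^r-\hat v^l}$ is exactly a comparison of $\hat v$ at $w^r$ versus $w^l$, which is where \Cref{tec_lem} and~\ref{Hyp:H4} enter). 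Your outline is repairable by replacing the sign claim with this absolute-value bound, but as written the step you single out as the crux would fail.
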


\begin{proof}
  We use the notation in~(\ref{eq:simplicity-2}).
  \revv{Note that in this situation $u^- < V_{\max}$, otherwise
    the interaction wave can not happen.}
  The left and right states at the position of the AV before $\bar t$
  are given respectively by
  \begin{equation*}
    \left(\rho^r, w^r\right) = \left(\hat{\rho}^r, w^r\right)
    \qquad \textrm{ and } \qquad
    \left(\check{\rho}^r, w^r\right).
  \end{equation*}
  At time $\bar t$, we need to consider the Riemann solver
  \begin{equation*}
    \mathcal{RS}^c \left((\rho^l, w^l),
      \left(\check{\rho}^r, w^r\right), u^-\right).
  \end{equation*}
  Let $(\rho^m, w^m)$ be in the intersection between the free and the
  congested phase, with $w^m= w^l$.
  We have that $\rho^m v^m > \varphi_{w^m, u^-} (\rho^m)$. In this case, there is
    a production of a first family rarefaction wave
    $((\rho^l, w^l), ({\hat{\rho}^{l}}, w^l))$,
    of a \nfw wave $(({\hat{\rho}^{l}}, w^l), ({\check{\rho}^{l}}, w^l))$
    and of a linear wave connecting $({\check{\rho}^{l}}, w^l)$
    with $(\check{\rho}^{r}, {w^r})$.
    For the functionals~(\ref{eq:funct_F_w})-(\ref{eq:numbers-wave-func}),
    since $\check{v}^l> \hat{v}^l$, $ \check{v}^r >v^r$ and $v^l= v^r$, we have
    \begin{align*}
      \Delta \mathcal F_w (\bar t)
      & = 0\,,
      \\
      \Delta \mathcal F_{\tilde v} (\bar t)
      & = \modulo{v^l - \hat{v}^l} + \modulo{\hat{v}^l - \check{v}^l}
        + \modulo{\check{v}^l - \check{v}^r}
        - 2 \modulo{\check{v}^l - \hat{v}^l}
      \\
      & \quad
        - \modulo{v^l - v^r}
        - \modulo{v^r - \check{v}^r}
        + 2 \modulo{v^r - \check{v}^r}
      \\
      & = \modulo{v^l - \hat{v}^l} - \left(\check{v}^l - \hat{v}^l\right)
        + \modulo{\check{v}^l - \check{v}^r}
      \\
      & \quad + \left(\check{v}^r - v^l\right),
       \\
      \Delta \mathcal N(\bar t)
      & \le \frac{V_{\max}}{\nu}.
    \end{align*}
 
    We have that
      \begin{align*}
        \Delta \mathcal F_{\tilde v} (\bar t)
        & = \modulo{v^l - \hat{v}^l} - \left(\check{v}^l - \hat{v}^l\right)
          + \modulo{\check{v}^l - \check{v}^r} + \left(\check{v}^r - v^l\right)
        \\
        & = \modulo{v^l - \hat{v}^l} + \left(\hat{v}^l - v^l\right)
          + \modulo{\check{v}^l - \check{v}^r} + \left(\check{v}^r -
          \check v^l\right)
      \end{align*}
      and so
      \begin{equation*}
        \Delta \mathcal F_{\tilde v} (\bar t) \le 2
        \modulo{v^l - \hat{v}^l} + 2 \modulo{\check{v}^l - \check{v}^r}.
      \end{equation*}
      By~\ref{Hyp:H2}, \ref{Hyp:H4}, ~\eqref{eq:lip-fhi-check} and
      \Cref{tec_lem}, there exists $\tilde C_F > 0$ such that
      \begin{align*}
        \modulo{v^l - \hat{v}^l}
        & = w^l \modulo{\psi\left(\rho^l\right) - \psi\left(\hat \rho^l\right)}
          \le C_\psi w^l \modulo{\rho^l - \hat \rho^l}
        \\
        & \le C_\psi w^l \frac{1}{w^l \bar \lambda + u^-} \modulo{F_\alpha\left(w^l, u^-\right)
          - F_\alpha\left(w^r, u^-\right)}
        \\
        & \le C_\psi w^l L_F \frac{1}{w_{\min} \bar \lambda}
          \modulo{w^l - w^r}.
      \end{align*}
      Moreover, by~\ref{Hyp:H2}, \ref{Hyp:H4}, \eqref{eq:check-rho},
      and~\eqref{eq:lip-fhi-check},
      \begin{align*}
        \modulo{\check{v}^l - \check{v}^r}
        & = \modulo{w^l \psi\left(\check{\rho}^l\right) - w^r
          \psi\left(\check{\rho}^r\right)}
        \\
        & \le w^l C_\psi \modulo{\check{\rho}^l - \check{\rho}^r}
          + \psi\left(\check{\rho}^r\right) \modulo{w^l - w^r}
        \\
        & = w^l C_\psi \frac{\modulo{F_\alpha\left(w^l, u^-\right)
          - F_\alpha\left(w^r, u^-\right)}}{V_{\max} - u^-}
          + \psi\left(\check{\rho}^r\right) \modulo{w^l - w^r}
        \\
        & \le \left({w^l C_\psi L_F } + 1\right)
          \modulo{w^l - w^r}.
      \end{align*}
      Therefore we conclude that
      \begin{align*}
        \Delta \mathcal F_{\tilde v} (\bar t)
        & \le 2
          \modulo{v^l - \hat{v}^l} + 2 \modulo{\check{v}^l - \check{v}^r}
        \\
        &
          \le 2 \left(C_\psi w^l L_F \frac{1}{w_{\min} \bar \lambda}
          + {w^l C_\psi L_F}
          + 1\right) \modulo{w^l - w^r}.
      \end{align*}
      Here $\xi^- = \xi^+ = u^-$ and so $\Delta \xi = 0$.
    This completes the proof.
\end{proof}
  
\begin{lemma}
  \label{NY2}
  Assume that the phase transition wave $( (\rho^l, w^l), (\rho^r, w^r))$, with positive speed, interacts from the left with the \nfw wave at the point $(\bar t,\bar x)$ with $\bar t > 0$ and $\bar x \in \reali$. The only interaction is \textbf{$\mathcal{PT}$-\nfw/\fw-$\mathcal{LW}$}. Then $\Delta \mathcal F_w (\bar t)= \rev{\Delta}\mathcal F_{\tilde v} (\bar t) =\Delta \mathcal F(\bar t)=0$ and $\Delta \mathcal N(\bar t) = -1$.
  Finally $\Delta \xi = 0$.
\end{lemma}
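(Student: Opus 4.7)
My plan is to identify the classical Riemann problem that $\rs^c$ must resolve at time $\bar t$, show that its classical solution is already admissible at the AV, and then read off the functional changes. Before $\bar t$ the AV carries an \nfw whose left state is $(\hat\rho^r, w^r)$ and whose right state is $(\check\rho^r, w^r)$; by the very definition of an \nfw (with $u^- < V_{\max}$ forced by its existence) the AV speed is $\xi^- = u^-$. The incoming $\mathcal{PT}$ connects $(\rho^l, w^l) \in F$ to $(\rho^r, w^r) \in C$ with $w^l = w^r$, and since the \nfw and the $\mathcal{PT}$ are adjacent, $\rho^r$ must coincide with $\hat\rho^r$. Because the $\mathcal{PT}$ overtakes the \nfw from the left, its Rankine--Hugoniot speed exceeds $u^-$.

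The crucial inequality is $\rho^l < \check\rho^l$. Rewriting the speed condition $(\rho^r v^r - \rho^l V_{\max})/(\rho^r - \rho^l) > u^-$ as $\rho^r (v^r - u^-) > \rho^l (V_{\max} - u^-)$, substituting $\rho^r = \hat\rho^r$ and the defining identity $\hat\rho^r (v^r - u^-) = F_\alpha(w^r, u^-) = F_{\alpha,1}(w^r)(V_{\max} - u^-)$, and finally using~\eqref{eq:check-rho} together with $w^l = w^r$ to identify $F_{\alpha,1}(w^r) = \check\rho^r = \check\rho^l$, the desired strict inequality follows.

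With this in hand I solve the classical Riemann problem at $\bar t$ between $(\rho^l, w^l)$ and $(\check\rho^r, w^r)$. Both states lie in $F$ and share the same $w$, so $\mathcal{RS}$ produces a single $\mathcal{LW}$ of speed $V_{\max}$ with no intermediate state. At $x/t = u^-$ the trace is $(\rho^l, w^l)$, and the constraint test $f_1 \le \varphi_{w^l, u^-}$ of \Cref{def:Riemann-solver} reduces, after a short algebraic manipulation, to exactly $\rho^l \le \check\rho^l$. Hence we are in Case~1 of the $\rs^c$ construction, so no non-classical wave is inserted at the AV and $\boldsymbol u = \min\{u^-, V_{\max}\} = u^- = \xi^-$; this gives $\Delta \xi = 0$, and the AV continues without any discontinuity on it, justifying the classification \textbf{$\mathcal{PT}$-\nfw/\fw-$\mathcal{LW}$}.

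It remains to tally the functionals. All $w$'s in the picture coincide, so $\Delta \mathcal F_w = 0$ is immediate. For $\mathcal F_{\tilde v}$ the pre-interaction contributions are $\modulo{v^l - v^r} + \modulo{v^r - \check v^r}$ (from the $\mathcal{PT}$ and the \nfw) minus $2\modulo{v^r - \check v^r}$ (the AV correction at the \nfw), while after $\bar t$ only the single jump $\modulo{v^l - \check v^r}$ from the surviving $\mathcal{LW}$ remains, and it is no longer located at the AV. Using $v^l \ge V_{\max} > v^r$, $\check v^r > V_{\max} > v^r$, and $v^l > \check v^r$ (from $\rho^l < \check\rho^l$ and the fact that $\psi$ is decreasing) both expressions collapse to $v^l - \check v^r$, so $\Delta \mathcal F_{\tilde v} = 0$ and hence $\Delta \mathcal F = 0$. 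The wave count drops from two to one, giving $\Delta \mathcal N = -1$. The main subtlety in the argument is precisely deriving $\rho^l < \check\rho^l$ from the PT-speed condition and recognising that the same inequality is what the constraint check for $\rs^c$ requires; once that link is made, everything else is a direct bookkeeping consequence of the $\rs^c$ construction.
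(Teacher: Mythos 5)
Your proof is correct and follows essentially the same route as the paper: reduce to the constrained Riemann problem $\mathcal{RS}^c\left((\rho^l,w^l),(\check\rho^l,w^l),u^-\right)$, observe that the classical solution (a single $\mathcal{LW}$) satisfies the constraint so no non-classical wave appears, and then do the bookkeeping for $\mathcal F_w$, $\mathcal F_{\tilde v}$, $\mathcal N$, $\xi$. The only difference is that you explicitly derive the admissibility inequality $\rho^l \le \check\rho^l$ (equivalently $\rho^l v^l \le \varphi_{w^l,u^-}(\rho^l)$) from the Rankine--Hugoniot speed condition of the incoming $\mathcal{PT}$, a step the paper states without justification.
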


\begin{proof}
 We use the notation in~(\ref{eq:simplicity-2}). Note that in this case $w^l=w^r$ and $\check{\rho}^l=\check{\rho}^r,\hat{\rho}^l=\hat{\rho}^r,\check{v}^l=\check{v}^r,\hat{v}^l=\hat{v}^r$ . At time $\bar t$, we need to consider the Riemann solver
  \begin{equation*}
    \mathcal{RS}^c \left((\rho^l, w^l),
      (\check{\rho}^l, w^l), u^-\right).
  \end{equation*}
In this case $\rho^l v(\rho^l,w^l) \leq \varphi_{w^l, u^-} (\rho^l)$. Thus, we apply the classical Riemann Problem between the states $(\rho^l, w^l)$ and $(\check{\rho}^l,\check{w})$, see~\cite[Theorem 2.1]{ColomboMarcelliniRascle}. That is, the phase transition
crosses the AV producing a linear wave $((\rho^l, w^l), (\check{\rho}^l,\check{w}))$. For the functionals~(\ref{eq:funct_F_w})-(\ref{eq:numbers-wave-func}), since $v^r < \check{v}^l < v^l$, we have
\begin{align*}
  \Delta \mathcal F_w (\bar t)
  & = 0\,,
  \\
  \Delta \mathcal F_{\tilde v} (\bar t)
  & = \modulo{v^l - \check{v}^l} - \modulo{v^l - v^r}
    - \modulo{v^r - \check{v}^l}
    + 2 \modulo{v^r - \check{v}^l}
  \\
  & = v^l - \check{v}^l - v^l + v^r
    + \check{v}^l- v^r
    = 0,
  \\
  \Delta \mathcal N(\bar t)
  & = -1.
\end{align*}
Here $\xi^- = u^-$, $\xi^+ = \min\{V_{\max}, u^-\} = u^-$, and so
$\Delta \xi = 0$.
\end{proof}
 
\begin{remark}
We note that the following interaction can not happen:
\begin{itemize}
\item A linear wave can not interact with a \nfw wave
  $((\hat{\rho}, \hat{w}), (\check{\rho}, \check{w}))$ from the left,
  since $(\hat{\rho}, \hat{w})$ is in the congested phase and a linear
  wave connects two states in the free phase.
\end{itemize}
\end{remark} 

 \subsubsection{Collisions from the right with a non fictitious wave}
 
 In this part, we assume that a wave interacts from the right at a time $\bar t$ with the AV and that there is a discontinuity at the position of the AV before time $\bar t$.
 Again denote with $\xi^-$ and $\xi^+$ respectively the speed of the
AV before and after the interaction (see~\eqref{eq:velocity-AV-wft}),
and define $\Delta \xi = \xi^+ - \xi^-$.
 
 \begin{lemma}
  \label{NY3}
  Assume that the \nfw $((\hat{\rho}, \hat{w}), (\check{\rho}, \check{w}))$
  interacts with the phase transition wave $((\check{\rho}, \check{w}), (\rho^r, w^r))$ at the point $(\bar t,\bar x)$ with $\bar t > 0$ and $\bar x \in \reali$. The only interaction is \textbf{\nfw-$\mathcal{PT}$/$1$-\fw}. Then $\Delta \mathcal F_w (\bar t)= \rev{\Delta}\mathcal F_{\tilde v} (\bar t) =\Delta \mathcal F(\bar t)=0$ and $\Delta \mathcal N(\bar t) = -1$.
  Finally $\modulo{\Delta \xi} \le v^l - v^r$.
\end{lemma}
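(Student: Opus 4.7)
My plan is to follow the blueprint of the earlier NY lemmas: identify the states at the AV before and after the interaction, classify the emerging wave pattern via $\mathcal{RS}^c$, and then compare the functionals directly. The key geometric fact I will exploit is that, by \ref{Hyp:H3}, the flux curve $\rho \mapsto w\rho\psi(\rho)$ is strictly decreasing on the congested phase $C$, while the constraint line $\ell(\rho) = F_\alpha(w,u^-) + u^-\rho$ has slope $u^-\ge 0$, so the two curves intersect at most once in $C$.

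Since both the NFW and the phase transition preserve $w$, I have $\hat w=\check w=w^r$; call this common value $w$. Just before $\bar t$, the state immediately to the left of the AV is $(\hat\rho,w)\in C$, the state between NFW and PT is $(\check\rho,w)\in F$ with $\check v=V_{\max}$, and the state past the PT is $(\rho^r,w)\in C$. At $\bar t$ I apply $\mathcal{RS}^c((\hat\rho,w),(\rho^r,w),u^-)$: both endpoints lie in $C$ with the same $w$, so the classical Riemann solver produces a single first-family wave, whose speed is bounded above by $-\bar\lambda<0$ thanks to \ref{Hyp:H3}. Admissibility for the constrained solver requires $\rho^r v^r\le F_\alpha(w,u^-)+u^-\rho^r$. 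This I derive from the fact that before $\bar t$ the PT was catching the AV from the right, so its Rankine--Hugoniot speed satisfies $\sigma_{PT}<u^-$; combined with $\rho^r>\check\rho$ and the identity $\check\rho\, V_{\max}=F_\alpha+u^-\check\rho$ (which follows from \eqref{eq:check-rho} and~\ref{Hyp:H4}(a)), this rewrites exactly as the required flux bound. Hence the configuration is indeed \textbf{$1$-\fw} and $\Delta\mathcal{N}(\bar t)=-1$.

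The functional estimates are then bookkeeping. Because $w$ is constant across all affected jumps, $\Delta\mathcal{F}_w(\bar t)=0$ is immediate. For $\Delta\mathcal{F}_{\tilde v}(\bar t)$, the geometric observation from the first paragraph is decisive: on $C$ the function $f-\ell$ is strictly decreasing with unique root $\hat\rho$, so the strict inequality $f(\rho^r)<\ell(\rho^r)$ forces $\rho^r>\hat\rho$, hence $v^r<\hat v$. Using in addition $\check v=V_{\max}>\hat v$, the pre-interaction contribution $|\hat v-\check v|+|\check v-v^r|-2|\check v-\hat v|$ collapses to $\hat v-v^r$, while the post-interaction contribution is the first-family jump $|\hat v-v^r|=\hat v-v^r$ alone (no discontinuity at the AV survives), so $\Delta\mathcal{F}_{\tilde v}(\bar t)=0$ and $\Delta\mathcal{F}(\bar t)=0$.

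For the speed variation, $\xi^-=\min\{u^-,V_{\max}\}=u^-$ (the right state of the AV before $\bar t$ is $(\check\rho,w)\in F$) and $\xi^+=\min\{u^-,v^r\}$. If $v^r\ge u^-$ then $\Delta\xi=0$; otherwise $|\Delta\xi|=u^--v^r$, and since the NFW constraint was strictly active I have $\hat v>u^-$ (from $\hat\rho(\hat v-u^-)=F_\alpha>0$), so $|\Delta\xi|<\hat v-v^r=v^l-v^r$. The main obstacle I expect is precisely the geometric step $\rho^r>\hat\rho$: this is the only place where \ref{Hyp:H3} and the non-negativity of the slope of $\ell$ genuinely enter, and once that monotonicity is available the remainder of the argument is a careful cancellation of absolute values.
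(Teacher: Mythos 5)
Your proof is correct and follows essentially the same route as the paper: apply $\mathcal{RS}^c\left((\hat\rho,w),(\rho^r,w),u^-\right)$, observe that the outcome is classical (a single first-family shock plus a \fw), and obtain $\Delta\mathcal F_w=\Delta\mathcal F_{\tilde v}=0$, $\Delta\mathcal N=-1$ by the same cancellation of absolute values, with the same computation $\xi^-=u^-$, $\xi^+=\min\{u^-,v^r\}$. The only difference is that you additionally justify two facts the paper merely asserts — the admissibility inequality $\rho^r v^r\le\varphi_{w,u^-}(\rho^r)$ (via the collision condition $\sigma_{PT}<u^-$ and the identity $\check\rho\,V_{\max}=F_\alpha+u^-\check\rho$) and the ordering $\hat v>v^r$ (via the monotonicity of $\rho\mapsto\rho w\psi(\rho)-u^-\rho$ from \ref{Hyp:H3}) — and your bound $\modulo{\Delta\xi}\le u^--v^r\le\hat v-v^r$ is slightly sharper than, and implies, the stated one (note only that $\tilde v(\check\rho,w)\ge V_{\max}$ rather than $=V_{\max}$, which does not affect any step).
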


\begin{proof}
 We use the notation in~(\ref{eq:simplicity-1}). At time $\bar t$, we need to consider the Riemann solver
  \begin{equation*}
    \mathcal{RS}^c \left((\hat{\rho}, \hat{w}),
      (\rho^r, w^r), u^-\right).
  \end{equation*}
In this case $\rho^r v(\rho^r,w^r) \leq \varphi_{w^r, u^-} (\rho^r)$. Thus, we apply the classical Riemann Problem between the states $(\hat{\rho}, \hat{w})$ and $(\rho^r, w^r)$, see~\cite[Theorem 2.1]{ColomboMarcelliniRascle}. That is, the phase transition wave crosses the AV producing a first family shock wave $((\hat{\rho}, \hat{w}), (\rho^r, w^r))$. For the functionals~(\ref{eq:funct_F_w})-(\ref{eq:numbers-wave-func}), since $\hat{v}> v^r$, $\check{v}> \hat{v}$ and $\check{v}> v^r$, we have
 \begin{align*}
      \Delta \mathcal F_w (\bar t)
      & = 0\,,
      \\
      \Delta \mathcal F_{\tilde v} (\bar t)
      & = \modulo{\hat{v} - v^r} - \modulo{\hat{v} - \check{v}}
        - \modulo{\check{v} - v^r}
        + 2 \modulo{\check{v} - \hat{v}}
        \\
        & = \hat{v} - v^r +\check{v}- \hat{v}- \check{v} + v^r
       = 0,
       \\
      \Delta \mathcal N(\bar t)
      & = -1.
    \end{align*}
    Here $\xi^- = u^-$, $\xi^+ = \min\{v^r, u^-\}$. If $u^- \le v^r$, then
    $\Delta \xi = 0$. If $u^- > v^r$, then
    $\modulo{\Delta \xi} = u^- - v^r \le V_{\max} - v^r \le v^l - v^r$.
 \end{proof}

\begin{remark}
We note that the following interactions can not happen:
\begin{itemize}
\item The \nfw wave $((\hat{\rho}, \hat{w}), (\check{\rho}, \check{w}))$
  can not interact from the left with a second family wave.
  Indeed if $(\check{\rho}, \check{w}) \in F \setminus C$, then the
  conclusion easily follows.
  If $(\check{\rho}, \check{w}) \in F \cap C$, then a wave of the second
  family coincides with a linear wave.
\item The \nfw wave $((\hat{\rho}, \hat{w}), (\check{\rho}, \check{w}))$
  can not interact from the left with a first family wave.  
  \end{itemize}
\end{remark} 
 
  \subsubsection{Collision with a special non fictitious wave}
  In this part we focus on the possible
  interactions of a \snfw with other waves.

  \begin{lemma}
    \label{le:snfw}
    Assume that, at time $\bar t > 0$, the \snfw connecting
    $\left(\rho^l, w^l\right)$ with $\left(\rho^r, w^r\right)$ interacts
    with another wave.
    Suppose that $w^l = w^r$, $(\rho^l, w^l) \in F \cap C$,
    $(\rho^r, w^l) \in F \setminus C$ and such that
    $\rho^r = \check \rho(w^l, 0)$.

    \begin{enumerate}
    \item The interaction with a first family wave is not possible.

    \item The interaction with a second family wave is not possible.

    \item The interaction with a linear wave is not possible.

    \item The interaction with a phase transition wave generates a (shock)
      wave of the first family and a \fw. More precisely the interaction is
      \textbf{\snfw-$\mathcal{PT}$/$1$-\fw}. In this case
      $\Delta \mathcal F_w (\bar t) = \Delta\mathcal F_{\tilde v} (\bar t)
      =\Delta \mathcal F(\bar t)=0$ and
      $\Delta \mathcal N(\bar t) = -1$.
      Finally $\modulo{\Delta \xi} = v(\rho^l, w^l) - v(\bar \rho^r, w^l)$,
      where $(\bar \rho^r, w^l)$ is the right state of the $\mathcal{PT}$.
    \end{enumerate}
  \end{lemma}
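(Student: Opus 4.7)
The setting is rigid: both $(\rho^l, w^l) \in F \cap C$ and $(\rho^r, w^l) = (\check\rho(w^l, 0), w^l)$ sit in the free phase, share the same Riemann invariant $w^l = w^r$, and by \ref{Hyp:H4}(c) the second is strictly in $F \setminus C$. Hence the SNFW itself coincides with a linear wave of speed $V_{\max}$, and \eqref{eq:velocity-AV-wft} together with $\dot y = V_{\max}$ forces $u^- = V_{\max}$. With this pinned down, my plan is to rule out items (1)--(3) using phase/speed incompatibility, and then to resolve item (4) via the constrained Riemann solver of \Cref{def:Riemann-solver}.

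For items (1) and (2), first- and second-family waves live in $C$. An interaction from the right would demand the incoming wave's left endpoint to equal $(\rho^r, w^l) \in F \setminus C$, incompatible with being in $C$. An interaction from the left would require overtaking the AV, but $1$-waves have speed $\le -\bar\lambda < 0$ by \ref{Hyp:H3}, and non-degenerate $2$-waves have constant velocity $v < V_{\max}$; degenerate waves confined to the boundary curve $F \cap C$ travel at $V_{\max}$, precisely the AV speed, so they propagate in parallel. For item (3), a linear wave lies wholly in $F$ and always moves at $V_{\max}$, again parallel to the AV. In all three cases no interaction can occur.

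For item (4), the only admissible configuration is a PT sitting to the right of the AV, with left state $(\rho^r, w^l) \in F$ matching the AV's right state and right state $(\bar\rho^r, w^l)$ interior to $C$. Since $\tilde v(\bar\rho^r, w^l) = w^l\psi(\bar\rho^r) < V_{\max}$, the PT's Rankine--Hugoniot speed lies strictly below $V_{\max}$, so the AV overtakes it at $\bar t$. I would then solve $\mathcal{RS}^c((\rho^l, w^l), (\bar\rho^r, w^l), V_{\max})$: as $u^+ = V_{\max}$ forces $F_\alpha(w^l, V_{\max}) = 0$, the constraint is slack and the classical Riemann solution---a single first-family shock of negative speed, arising because $w$ is constant and $\rho^l < \bar\rho^r$---is admissible. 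The shock separates to the left of the AV, which now sees $(\bar\rho^r, w^l)$ on both sides (a \fw), so $\xi^+ = v(\bar\rho^r, w^l)$ and $|\Delta\xi| = V_{\max} - v(\bar\rho^r, w^l) = v(\rho^l, w^l) - v(\bar\rho^r, w^l)$.

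The functional bookkeeping is then direct. Since every state carries the common $w^l$, $\Delta \mathcal F_w = 0$ is immediate. Using the sign pattern $\tilde v(\rho^r, w^l) > V_{\max} = \tilde v(\rho^l, w^l) > \tilde v(\bar\rho^r, w^l)$, the pre-interaction contribution $|\tilde v(\rho^r) - \tilde v(\rho^l)| + |\tilde v(\bar\rho^r) - \tilde v(\rho^r)| - 2|\tilde v(\rho^r) - \tilde v(\rho^l)|$ telescopes to $V_{\max} - \tilde v(\bar\rho^r, w^l)$, matching the post-interaction $|\tilde v(\bar\rho^r, w^l) - \tilde v(\rho^l, w^l)|$ with vanishing AV correction. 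Hence $\Delta \mathcal F_{\tilde v} = 0$ and $\Delta \mathcal N = -1$. The main obstacle will be making the impossibility arguments in items (1)--(3) watertight at the boundary state $(\rho^l, w^l) \in F \cap C$: one must carefully separate genuinely $C$-interior waves (which fail state matching) from degenerate boundary waves traveling at $V_{\max}$ (which share the AV's speed and hence never overtake it).
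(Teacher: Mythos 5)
Your proposal is correct and follows essentially the same route as the paper: items (1)--(3) are excluded by the fact that the \snfw travels at speed $V_{\max}$ combined with phase-membership of the endpoints, and item (4) is resolved by applying $\mathcal{RS}^c\left((\rho^l,w^l),(\bar\rho^r,w^l),V_{\max}\right)$, which returns the classical first-family shock plus a \fw moving at $v(\bar\rho^r,w^l)$, with the same bookkeeping giving $\Delta\mathcal F_w=\Delta\mathcal F_{\tilde v}=0$, $\Delta\mathcal N=-1$, and $\modulo{\Delta\xi}=V_{\max}-v(\bar\rho^r,w^l)$. The only loose phrasing is justifying admissibility of the classical solution by ``$F_\alpha(w^l,V_{\max})=0$ makes the constraint slack''; what actually makes case 1 of the constrained Riemann solver apply is that the state seen at the ray $x/t=V_{\max}$ is $(\bar\rho^r,w^l)$ with $v(\bar\rho^r,w^l)\le V_{\max}$, so $f_1\le\varphi_{w^l,V_{\max}}$ --- a wording issue that does not affect the conclusion.
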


  \begin{proof}
    By assumption the \snfw is also a linear wave, so that its velocity
    is equal to $V_{\max}$. This implies that it can not interact with
    another linear wave or with a second family wave.

    In principle the \snfw can interact with a first family wave coming from
    the right. In this situation the state $\left(\rho^r, w^r\right)$
    is the left state of the wave of the first family, but, by
    hypothesis, $\left(\rho^r, w^r\right) \in F\setminus C$. This is not
    possible.

    Consider the case of the interaction with a phase transition wave
    connecting $\left(\rho^r, w^r\right) \in F \setminus C$ with
    $\left(\bar \rho^r, \bar w^r\right) \in C$. Clearly
    $\bar w^r = w^r = w^l$.
    At time $\bar t$, we need to consider the Riemann solver
    \begin{equation*}
      \mathcal{RS}^c \left((\rho^l, w^l),
        (\bar \rho^r, w^l), V_{\max} \right).
    \end{equation*}
    After the interaction, there is a (shock) wave of the first family,
    connecting $\left(\rho^l, w^l\right)$ with
    $\left(\bar \rho^r, w^l\right)$ and a \fw traveling at speed
    $\min \left\{V_{\max}, v(\bar \rho^r, w^l)\right\} = v(\bar \rho^r, w^l)$,
    so that the interaction is \textbf{\snfw-$\mathcal{PT}$/$1$-\fw}.
    
    For the functionals~(\ref{eq:funct_F_w})-(\ref{eq:numbers-wave-func}),
    since $\tilde v(\bar \rho^r, w^l) < \tilde v(\rho^l, w^l) = V_{\max}
    < \tilde v(\rho^r, w^l)$,
    we have
    \begin{align*}
      \Delta \mathcal F_w (\bar t)
      & = 0\,,
      \\
      \Delta \mathcal F_{\tilde v} (\bar t)
      & = \modulo{\tilde v(\rho^l, w^l) - \tilde v(\bar \rho^r, w^l)}
        + \modulo{\tilde v(\rho^l, w^l) - \tilde v(\rho^r, w^l)}
        - \modulo{\tilde v(\rho^r, w^l) - \tilde v(\bar \rho^r, w^l)}
      \\
      & = V_{\max} - \tilde v(\bar \rho^r, w^l)
        + \tilde v(\rho^r, w^l) - V_{\max}
        - \tilde v(\rho^r, w^l) + \tilde v(\bar \rho^r, w^l) = 0,
       \\
      \Delta \mathcal N(\bar t)
      & = -1.
    \end{align*}
    Here $\xi^- = V_{\max}$, $\xi^+ = v(\bar \rho^r, w^l)$.
    Thus $\modulo{\Delta \xi} = v(\rho^l, w^l) - v(\bar \rho^r, w^l)$.
  \end{proof}

We collect all the interaction estimates between two waves
in~\Cref{tab:interaction-estimates}. 
\begin{table}
  \centering
  \begin{tabular}{|c|c|c|c|l|}
    \hline
    Interaction type
    & $\Delta \mathcal F_w$
    & $\Delta \mathcal F_v$
    & $\Delta \mathcal N$
    & Lemma
    \\
    \hline \hline
    \scriptsize\textbf{$2$-$1$/$1$-$2$}
    &
      \multirow{4}{*}{$= 0$}
    &
      \multirow{4}{*}{$\le 0$}
    &
      \multirow{4}{*}{$\le 0$}
    & \multirow{4}{*}{\scriptsize \Cref{le:estim-funct-w-interaction-PT}}
    \\\scriptsize
    \textbf{$\mathcal{LW}$-$\mathcal{PT}$/$\mathcal{PT}$-$2$}
        &
    &
    &
    & 
    \\\scriptsize
    \textbf{$1$-$1$/$1$}
    &&&&
    \\\scriptsize
    \textbf{$\mathcal{PT}$-$1$/$\mathcal{PT}$}
    &&&&
    \\
    \hline\scriptsize
    \textbf{$2$-$\mathcal{FW}$/$\mathcal{FW}$-$2$}
    &
      \multirow{2}{*}{$=0$}
    &
      \multirow{2}{*}{$=0$}
    &
      $= 0$
    & \multirow{2}{*}{\scriptsize \Cref{Y1}}
    \\\scriptsize
    \textbf{$2$-$\mathcal{FW}$/$1$-$\mathcal{NFW}$-$\mathcal{PT}$-$2$}
    &&&
        $=3$
    &
    \\
    \hline\scriptsize
    \textbf{$\mathcal{PT}$-$\mathcal{FW}$/$\mathcal{FW}$-$\mathcal{PT}$}
    &
      $=0$
    & $=0$
    & $=0$
    &
      \scriptsize \Cref{Y2}
    \\
    \hline\scriptsize
    \textbf{$\mathcal{FW}$-$\mathcal{PT}$/$\mathcal{PT}$-$\mathcal{FW}$}
    &
      $=0$
    & $=0$
    & $=0$
    &
      \scriptsize \Cref{Y2Bis}
    \\
    \hline\scriptsize
    \textbf{$\mathcal{LW}$-\fw/$\mathcal{FW}$-$\mathcal{LW}$}
    &
      \multirow{2}{*}{$=0$}
    &
      $=0$
    & $=0$
    & \multirow{2}{*}{\scriptsize \Cref{Y3}}
    \\
    \scriptsize
    \textbf{$\mathcal{LW}$-\fw/$\mathcal{PT}$-\nfw-$\mathcal{LW}$}
    &&
       $\le 0$
    &
      $=2$
    &
    \\
    \hline\scriptsize
    \textbf{\fw-$1$/$1$-$\mathcal{FW}$}
    &
      \multirow{2}{*}{$=0$}
    & $=0$
    & $=0$
    & \multirow{2}{*}{\scriptsize \Cref{Y4}}
    \\\scriptsize
    \textbf{\fw-$1$/$1$-\nfw-$\mathcal{PT}$}
    &&
       $<0$
    & $=2$
    &
    \\
    \hline
    \scriptsize\textbf{$2$-\nfw/$1$-\nfw-$\mathcal{LW}$}
    & $=0$
    &
      \small$\le O(1) \modulo{w^l - w^r}$
    &
      $\le \frac{V_{\max}}{\nu}$
    &
      \scriptsize \Cref{NY1}
    \\
    \hline
    \scriptsize
    \textbf{$\mathcal{PT}$-\nfw/\fw-$\mathcal{LW}$}
    &
      $=0$
    &
      $=0$
    & $=-1$
    &
      \scriptsize \Cref{NY2}
    \\
    \hline\scriptsize
    \textbf{\nfw-$\mathcal{PT}$/$1$-\fw}
    &
      $=0$
    &
      $=0$
    & $=-1$
    &
      \scriptsize \Cref{NY3}
    \\
    \hline
     \scriptsize
     \revv{\textbf{\snfw-$\mathcal{PT}$/$1$-\fw}}
    &
       \revv{$=0$}
    &
       \revv{$=0$}
    &  \revv{ $=-1$}
    &
       \revv{\scriptsize \Cref{le:snfw}}
    \\
    \hline
  \end{tabular}
  \caption{The variation of the functionals $\mathcal F_w$,
    $\mathcal F_v$, and $\mathcal N$ due to interactions between waves.
                                                    The Landau symbol $O(1)$ denotes a constant; see \Cref{NY1} for the precise
    expression.}
  \label{tab:interaction-estimates}
\end{table}

\subsubsection{Control changes}
We focus here on the situations in which a jump in the control function $u$ occurs.
and denote with $\xi^-$ and $\xi^+$ respectively the speed of the
AV before and after the interaction (see~\eqref{eq:velocity-AV-wft}),
and define $\Delta \xi = \xi^+ - \xi^-$.
\begin{lemma}
  \label{Control1}
  Assume that, at time $\bar t > 0$, the control function $u$ jumps from $u^{-}=u(\bar t -)$ to $u^{+}=u(\bar t +)$ and that we have a \fw at time $\bar  t-$. We have the following cases.
  \begin{enumerate}
  \item  At time $\bar t+$ we have a \fw and no new wave is produced. Then $\Delta \mathcal F_w (\bar t)= \rev{\Delta}\mathcal F_{\tilde v} (\bar t) =\Delta \mathcal F(\bar t)=0$ and $\Delta \mathcal N(\bar t) = 0$.
    Finally $\modulo{\Delta \xi} \le \modulo{u^- - u^+}$.
    
\item At time $\bar t+$ we have a \nfw and the number of waves increases.
  Then $(\rho^{l},w^{l}) \in C$, \textbf{\fw /$1$-\nfw -$\mathcal{PT}$},
  and $u^+ < u^-$.
  The phase transition wave coincides with a linear wave in the case
  $(\rho^{l},w^{l}) \in F \cap C$.
  Moreover $\Delta \mathcal F_w (\bar t)= \rev{\Delta}\mathcal F_{\tilde v} (\bar t) =\Delta \mathcal F(\bar t)=0$ and $\Delta \mathcal N(\bar t) = 3$.
  Finally $\modulo{\Delta \xi} \le \modulo{u^- - u^+}$.
 \end{enumerate}
\end{lemma}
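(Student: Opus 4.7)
The plan is to exploit that, at $\bar t-$, the $\mathcal{FW}$ encodes the absence of any jump in $(\rho,w)$ at the AV: the left and right states coincide, say both equal to $(\rho^*,w^*)$, and the old flux constraint $\rho^* v(\rho^*,w^*)\le \varphi_{w^*,u^-}(\rho^*)$ holds. At time $\bar t$, the wave pattern is determined by $\rs^c\bigl((\rho^*,w^*),(\rho^*,w^*),u^+\bigr)$, and the two cases of the statement correspond exactly to the two alternatives in the construction of $\rs^c$ after \Cref{def:Riemann-solver}. I would split the proof accordingly.

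For Case 1, assume $\rho^* v(\rho^*,w^*)\le \varphi_{w^*,u^+}(\rho^*)$. Then $\rs^c$ reduces to the classical Riemann solver with identical left and right states, so no wave is created and $\Delta \mathcal F_w=\Delta\mathcal F_{\tilde v}=\Delta\mathcal N=0$. The AV speed changes only through the control, $\xi^\pm=\min\{u^\pm, v(\rho^*,w^*)\}$, and the Lipschitz property of $\min$ gives $\modulo{\Delta\xi}\le\modulo{u^--u^+}$.

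For Case 2, assume $\rho^* v(\rho^*,w^*)>\varphi_{w^*,u^+}(\rho^*)$. Since the opposite inequality held at $\bar t-$, monotonicity of $\sigma\mapsto\varphi_{w^*,\sigma}(\rho^*)=F_{\alpha,1}(w^*)(V_{\max}-\sigma)+\rho^*\sigma$ in $\sigma$ forces $u^+<u^-$ and $\rho^*>\check\rho(w^*,u^+)=F_{\alpha,1}(w^*)$; combined with $\rho^*\bigl(v(\rho^*,w^*)-u^+\bigr)>F_\alpha\ge 0$, this shows $v(\rho^*,w^*)>u^+$ and rules out $(\rho^*,w^*)\in F\setminus C$, so $(\rho^*,w^*)\in C$ (with the borderline case $F\cap C$ collapsing the outgoing $\mathcal{PT}$ to a linear wave). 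The construction of $\rs^c$ then produces a first family shock from $(\rho^*,w^*)$ to $(\hat\rho,w^*)$, a $\mathcal{NFW}$ from $(\hat\rho,w^*)$ to $(\check\rho,w^*)$, and a phase transition from $(\check\rho,w^*)$ to $(\rho^*,w^*)$, giving the pattern \textbf{\fw /$1$-\nfw -$\mathcal{PT}$} and $\Delta\mathcal N=3$. Since $w$ is preserved across all three waves, $\Delta\mathcal F_w=0$. For $\Delta\mathcal F_{\tilde v}$, using the ordering $\hat v<\tilde v^*<\check v$ (which follows from $\check\rho<\rho^*<\hat\rho$ and the monotonicity of $\psi$) and the penalty term $-2\modulo{\check v-\hat v}$ localized at the AV, a direct computation yields
\begin{equation*}
\Delta\mathcal F_{\tilde v}(\bar t)=(\tilde v^*-\hat v)+(\check v-\hat v)+(\check v-\tilde v^*)-2(\check v-\hat v)=0.
\end{equation*}
For $\Delta\xi$, the right state of the AV after the interaction is $(\check\rho,w^*)\in F$, so $\xi^+=u^+$, and since $v(\rho^*,w^*)>u^+$ and $u^->u^+$ we have $\xi^-=\min\{u^-,v(\rho^*,w^*)\}\ge u^+$, whence $\modulo{\Delta\xi}=\xi^--u^+\le u^--u^+=\modulo{u^--u^+}$.

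The main obstacle I expect is the qualitative step in Case 2: tracing the monotonicity of $\varphi_{w^*,\sigma}(\rho^*)$ in $\sigma$ to simultaneously extract the sign $u^+<u^-$, the localization $\rho^*>\check\rho$, and the phase information $(\rho^*,w^*)\in C$, and then cleanly absorbing the degenerate subcase $(\rho^*,w^*)\in F\cap C$ into the same wave pattern. Once this is done, the functional identities and the $\xi$-bound are just algebra on the Lipschitz map $\tilde v$ and on the structure of $\rs^c$.
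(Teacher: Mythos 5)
Your overall strategy is exactly the paper's: represent the \fw by equal left and right states $(\rho^*,w^*)$ at the AV, apply $\rs^c\bigl((\rho^*,w^*),(\rho^*,w^*),u^+\bigr)$, and split according to whether $\rho^* v(\rho^*,w^*)\le\varphi_{w^*,u^+}(\rho^*)$ or not. Your computations in both cases (the identity $\Delta\mathcal F_{\tilde v}=(\tilde v^*-\hat v)+(\check v-\hat v)+(\check v-\tilde v^*)-2(\check v-\hat v)=0$, $\Delta\mathcal N=3$, and the bounds $\modulo{\Delta\xi}\le\modulo{u^--u^+}$ via $\xi^-=\min\{u^-,v^*\}$, $\xi^+=u^+$) coincide with those in the paper's proof.

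The one step that does not hold as written is your justification of the phase information in Case 2. From $\rho^*\bigl(v(\rho^*,w^*)-u^+\bigr)>F_\alpha\ge0$ you correctly get $v(\rho^*,w^*)>u^+$ (and $u^+<V_{\max}$), but this does \emph{not} rule out $(\rho^*,w^*)\in F\setminus C$: in the free phase $v(\rho^*,w^*)=V_{\max}>u^+$ automatically, so the inequality carries no phase information. Indeed, if $v^*=V_{\max}$ the violated constraint at $u^+$ reduces to $\rho^*>F_{\alpha,1}(w^*)=\check\rho$, which is compatible with $F\setminus C$ because $(\check\rho,w)\in F\setminus C$ by \ref{Hyp:H4}(c); the constraint holding at $\bar t-$ then forces $u^-=V_{\max}$, and in that configuration the outgoing pattern of $\rs^c$ would be $\mathcal{PT}$--\nfw--$\mathcal{LW}$ (left state in $F$, so no $1$-wave), not \textbf{\fw/$1$-\nfw-$\mathcal{PT}$}. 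So either this sub-case must be explicitly excluded (e.g.\ by the way the statement is used, with $u^-<V_{\max}$ or the left state congested), or it must be treated separately; your monotonicity argument alone cannot yield $(\rho^*,w^*)\in C$. For comparison, the paper's proof does not argue this point at all — it simply asserts $(\rho^l,w^l)\in C$ and $u^+<u^-$ in Case 2 — so your attempt to prove it is more ambitious, but the inference you give is a non sequitur and needs the above repair (the derivation of $u^+<u^-$ and $\rho^*>\check\rho$ from $(\rho^*-\check\rho)(u^+-u^-)<0$ is fine once the alternative $\rho^*<\check\rho$ is excluded via the free-phase computation).
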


\begin{proof}
  Here we use the following notations:
  \begin{equation*}
    \begin{array}{r@{\,=\,}l@{\quad}r@{\,=\,}l@{\quad}r@{\,=\,}l@{\quad}
      r@{\,=\,}l}
      u^-
      & u\left(\bar t-\right),
      & u^+
      & u\left(\bar t+\right),
      &     v^l
      & \tilde v(\rho^l, w^l),
      & v^r
      & \tilde v(\rho^r, w^r),
        \vspace{.2cm}\\
      \check{\rho}^l
      & \check{\rho}(u^-,w^l),
      & \check{\rho}^r
      & \check{\rho}(u^-,w^r),
      & \hat{\rho}^l
      & \hat{\rho}(u^-,w^l),
      & \hat{\rho}^r
      & \hat{\rho}(u^-,w^r),
        \vspace{.2cm}\\
      \check{v}^l
      & \tilde{v}(\check{\rho}^l,w^l),
      & \check{v}^r
      & \tilde{v}(\check{\rho}^r,w^r),
      & \hat{v}^l
      & \hat{\rho}(\hat{\rho}^l,w^l),
      & \hat{v}^r
      & \hat{\rho}(\hat{\rho}^r,w^r),
    \end{array}
  \end{equation*}
  At time $\bar t$, we need to consider the Riemann solver
  \begin{equation*}
    \mathcal{RS}^c \left((\rho^l, w^l),
      (\rho^l, w^l), u^+ \right).
  \end{equation*}
We have two different cases.  
 \begin{enumerate}
\item $\rho^l v^l \leq \varphi_{w^l, u^+}(\rho^l)$.
  In this case, at time $\bar t+$, no new wave is produced and for the
  functionals~(\ref{eq:funct_F_w})-(\ref{eq:numbers-wave-func}), we have
  \begin{equation*}
    \Delta \mathcal F_w (\bar t)= \mathcal F_{\tilde v} (\bar t) =\Delta \mathcal
    F(\bar t)=\Delta \mathcal N(\bar t) = 0.
  \end{equation*}
  Here $\xi^- = \min\{v^l, u^-\}$, $\xi^+ = \min\{v^l, u^+\}$, and so
  $\modulo{\Delta \xi} \le \modulo{u^- - u^+}$.
  
\item $\rho^l v^l > \varphi_{w^l, u^+}(\rho^l)$.
  In this case $(\rho^{l},w^{l}) \in C$, $u^+ < u^-$, and there is a production of a first family wave $((\rho^l, w^l), ({\hat{\rho}}, w^l))$, of a \nfw wave $(({\hat{\rho}}, w^l), ({\check{\rho}}, w^l))$ and of a phase transition wave connecting $({\check{\rho}}, w^l)$ with $(\rho^l, {w^l})$.
  Note that, if $(\rho^l, w^l) \in F \cap C$,
  then the phase transition
  wave is indeed a liner wave.
For the functionals~(\ref{eq:funct_F_w})-(\ref{eq:numbers-wave-func}), since $\check{v} > v^l > \hat{v}$, we have
\begin{align*}
      \Delta \mathcal F_w (\bar t)
      & = 0\,,
      \\
      \Delta \mathcal F_{\tilde v} (\bar t)
      & = \modulo{v^l - \hat{v}} + \modulo{\hat{v} - \check{v}}
        + \modulo{\check{v} - v^l}
        - 2 \modulo{\check{v} - \hat{v}}
        \\
        & = v^l - \hat{v} -\check{v}+ \hat{v}+ \check{v} - v^l
       = 0,
       \\
      \Delta \mathcal N(\bar t)
      & = 3.
\end{align*}
Here $\xi^- = \min\{v^l, u^-\}$, $\xi^+ = u^+$, and $v^l > u^+$.
Thus $\modulo{\Delta \xi} \le \modulo{u^- - u^+}$.
\end{enumerate}
\end{proof}

\begin{lemma}
  \label{Control2}
  Assume that, at time $\bar t > 0$, the control function $u$ jumps from
  $u^{-}=u(\bar t -)$ to $u^{+}=u(\bar t +)$ and that we have a \nfw
  at time $\bar  t-$, connecting $(\rho^l, w^l)$ with
  $(\rho^r, w^l)$. We have the following cases.
  \begin{enumerate}
  \item \revv{At time $\bar t+$ we have a \snfw and the number
      of waves increases.
      The production is \textbf{\nfw /$1$-\snfw ($\mathcal{LW}$)}.
      In this case, the left state of the \snfw belongs to $F \cap C$, the
      right state belongs to $F \setminus C$, and they have the same $w$.
      Moreover $\Delta \mathcal F_w (\bar t)=0$, $\Delta\mathcal F_{\tilde v}
      (\bar t) \le
      2 w^l C_\psi \frac{F_{\alpha,1}(w_{\max}) + R}{w_{\min} \bar \lambda}
      (u^+ - u^-),$ and
      $\Delta \mathcal N(\bar t) \le 1 + \frac{V_{\max}}{\nu}$.
      Finally $\modulo{\Delta \xi} \le \modulo{u^- - u^+}$.}

  \item At time $\bar t+$ we have a \nfw and the number of waves
    increases. The production is \textbf{\nfw /$1$-\nfw
      -$\mathcal{LW}$}. Moreover $\Delta \mathcal F_w (\bar t)=0$,
    $\Delta\mathcal F_{\tilde v} (\bar t) \le 2 w^l C_\psi
    \frac{F_{\alpha, 1}(w^l) + R}{w^l \bar \lambda} \modulo{u^+ -
      u^-}$ and
    $\Delta \mathcal N(\bar t) \le 1 + \frac{V_{\max}}{\nu}$.
    Finally it holds
    $\modulo{\Delta \xi} \le \modulo{u^- - u^+}$.
  \end{enumerate}
\end{lemma}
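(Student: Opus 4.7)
The plan is to resolve the constrained Riemann problem $\rs^c((\rho^l, w^l), (\rho^r, w^l), u^+)$ triggered at time $\bar t$ by the control jump, where by hypothesis $\rho^l = \hat \rho(w^l, u^-)$ and $\rho^r = \check \rho(w^l, u^-)$ since a \nfw sits at the AV. The decisive structural observation is that~\eqref{eq:check-rho} says $\check \rho(w, \sigma) = F_{\alpha,1}(w)$ is independent of $\sigma$; hence $\check \rho^+ := \check \rho(w^l, u^+)$ equals $\check \rho(w^l, u^-) = \rho^r$, and consequently $\check v^+ = \check v^-$. The classical Riemann solver applied to the same-$w$ states produces a first family rarefaction in $C$ followed by a phase transition into $F$ at speed $V_{\max}$; since the same endpoints already required a non-classical resolution at $u^-$, one checks that the classical profile still violates $\varphi_{w^l, u^+}$ at the AV, so a non-classical reconfiguration is needed.

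Writing $\hat \rho^+ := \hat \rho(w^l, u^+)$, the solver outputs a first family wave from $(\hat \rho^-, w^l)$ to $(\hat \rho^+, w^l)$, a non-classical wave traveling at speed $u^+$ from $(\hat \rho^+, w^l)$ to $(\check \rho^+, w^l)$, and a trivial linear wave from $(\check \rho^+, w^l)$ to $(\rho^r, w^l)$ (degenerate because $\check \rho^+ = \rho^r$). For $u^+ < V_{\max}$ the middle wave is a genuine \nfw, yielding case~2. For $u^+ = V_{\max}$, $F_\alpha(w^l, V_{\max}) = 0$ forces $\hat \rho^+ = \rho_c(w^l) \in F \cap C$, and the non-classical wave degenerates into a linear wave of speed $V_{\max}$, namely the \snfw of~\Cref{le:snfw}, yielding case~1.

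For the functional bookkeeping, $\Delta \mathcal F_w(\bar t) = 0$ is immediate because every involved wave has $w^l$ on both sides. For $\mathcal F_{\tilde v}$, the old \nfw at the AV contributed $|\check v^- - \hat v^-| - 2|\check v^- - \hat v^-| = -(\check v^- - \hat v^-)$ since $\check v^\pm > \hat v^\pm$; after the jump, the $1$-wave contributes $|\hat v^- - \hat v^+|$, the new non-classical wave at the AV contributes $-(\check v^+ - \hat v^+)$, and the degenerate linear wave contributes $0$. Using $\check v^+ = \check v^-$, the terms collapse into $\Delta \mathcal F_{\tilde v}(\bar t) = |\hat v^- - \hat v^+| + (\hat v^+ - \hat v^-) \le 2\,|\hat v^+ - \hat v^-|$. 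The wave count $\Delta \mathcal N(\bar t) \le 1 + V_{\max}/\nu$ absorbs the at most $V_{\max}/\nu$ rarefaction shocks approximating the new $1$-rarefaction plus one unit of slack for the linear wave. Since the AV speed is $u^-$ before and $u^+$ after, $|\Delta \xi| = |u^+ - u^-|$.

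The main technical obstacle is the Lipschitz estimate $|\hat v^+ - \hat v^-| \le C\,|u^+ - u^-|$ with $C$ of the stated form. By~\ref{Hyp:H2}, $|\hat v^+ - \hat v^-| \le w^l C_\psi\,|\hat \rho^+ - \hat \rho^-|$, so it suffices to control $|\hat \rho^+ - \hat \rho^-|$. Defining $G(\rho, u) := w^l \rho \psi(\rho) - u \rho - F_{\alpha, 1}(w^l)(V_{\max} - u)$, one has $G(\hat \rho(w^l, u), u) = 0$ on the congested branch. Assumption~\ref{Hyp:H3} gives $\partial_\rho G = \lambda_1(\rho, w^l) - u \le -(\bar \lambda + u) \le -\bar \lambda$, while $|\partial_u G| = |F_{\alpha, 1}(w^l) - \hat \rho| \le F_{\alpha, 1}(w^l) + R$. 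The implicit function theorem, or equivalently the appendix~\Cref{tec_lem}, delivers the required Lipschitz constant and hence the stated bound on $\Delta \mathcal F_{\tilde v}$. For case~1 one replaces $F_{\alpha, 1}(w^l)$ with the uniform majorant $F_{\alpha, 1}(w_{\max})$ via the monotonicity~\ref{Hyp:H4}(d). Everything else is algebraic cancellation.
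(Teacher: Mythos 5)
Your proposal is correct and follows essentially the same route as the paper's proof: you resolve $\rs^c\left((\hat\rho,w^l),(\check\rho,w^l),u^+\right)$, exploit that $\check\rho(w,\sigma)=F_{\alpha,1}(w)$ is independent of $\sigma$ (so the trailing linear wave is degenerate and $\check v^+=\check v^-$), obtain the same cancellation $\Delta\mathcal F_{\tilde v}\le 2\modulo{\hat v^+-\hat v^-}$, and control $\modulo{\hat\rho^+-\hat\rho^-}$ through \Cref{tec_lem}, with your split $u^+<V_{\max}$ versus $u^+=V_{\max}$ equivalent to the paper's dichotomy on whether the middle state $(\rho^m,w^l)$ violates $\varphi_{w^l,u^+}$. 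Two small points of care: when $u^+=V_{\max}$ the classical profile does \emph{not} violate the constraint (equality holds since $F_\alpha(w^l,V_{\max})=0$), so the solver actually returns the classical solution whose $V_{\max}$-speed linear wave is the \snfw --- which is exactly the degenerate configuration you describe, so the conclusion stands --- and the stated constants (denominators $w^l\bar\lambda$ and $w_{\min}\bar\lambda$) should be quoted from \Cref{tec_lem}, since your direct implicit-function computation as written gives denominator $\bar\lambda+u$.
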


\begin{proof}
  We use the following notations:
  \begin{equation*}
    \begin{array}{r@{\,=\,}l@{\quad}r@{\,=\,}l@{\quad}r@{\,=\,}l@{\quad}
      r@{\,=\,}l}
      u^-
      & u\left(\bar t-\right),
      & u^+
      & u\left(\bar t+\right),
      & \check \rho
      & \check \rho ( u^-,w^l),
      & \hat \rho
      & \hat \rho ( u^-,w^l),
        \vspace{.2cm}\\
      v^l
      & \tilde v(\rho^l, w^l),
      & v^r
      & \tilde v(\rho^r, w^l),
      & \check{v}
      & \tilde v(\check{\rho}, {w^l}),
      & \hat{v}
      & \tilde v(\hat{\rho}, {w^l}),
        \vspace{.2cm}\\
      \check{\rho}^+
      & \check{\rho}(u^+,w^l),
      & \hat{\rho}^+
      & \hat{\rho}(u^+,w^l),
      & \check{v}^+
      & \check{v}(u^+,w^l),
      & \hat{v}^+
      & \hat{v}(u^+,w^l).
    \end{array}
  \end{equation*}
  \revv{Here in particular we have that $v^l = \hat v$ and $v^r = \check v$.}
  At time $\bar t$, we need to consider the Riemann solver
  \begin{equation*}
    \mathcal{RS}^c \left((\hat{\rho}, w^l),
      (\check{\rho}, w^l), u^+ \right).
  \end{equation*}
  Let $(\rho^m, w^m)$ be in the intersection between the free
  and the congested phase, with $w^m= w^l$. We have two different cases. 
 \begin{enumerate}
 \item $\rho^m v^m \leq \varphi_{w^l, u^+}(\rho^m)$.
   By~\eqref{eq:check-rho}, $\check \rho(w^l, \sigma)$ depends only on $w^l$
   for every $\sigma \in [0, V_{\max}]$.
   Moreover $\rho^m \ge \check \rho(w^l, \sigma)$
   for $\sigma \in [0, V_{\max}]$.
   Therefore we deduce that $u^+ = V_{\max}$
   \revv{and $\rho^m v^m = \varphi_{w^l, u^+}(\rho^m)$}.
   \revv{In this case there is a production of a first family wave
     $((\hat{\rho}, w^l), (\rho^m, w^l))$ and of a linear wave connecting
     $(\rho^m, w^l)$ with $(\check{\rho}, {w^l})$, which is also
     a $\mathcal{SNFW}$ wave.
     Note that the left state of the \snfw belongs to $F \cap C$, while the
     right state belongs to $F \setminus C$.
     For the functionals~(\ref{eq:funct_F_w})-(\ref{eq:numbers-wave-func}),
     using~\ref{Hyp:H2} and~\Cref{tec_lem},
     we have
     \begin{align*}
       \Delta \mathcal F_w (\bar t)
      & = 0\,,
    \\
    \Delta \mathcal F_{\tilde v} (\bar t)
      & = \modulo{\hat{v}- v^m} + \modulo{{v}^{m} - \check{v}}
        - 2 \modulo{{v}^m - \check{v}}
    \\
      & - \modulo{\hat{v} - \check{v}}+ 2 \modulo{\hat{v} - \check{v}}
    \\
      & = (v^m -\hat v) -(\check v - v^m) + (\check v - \hat v)
    \\
       & = 2 (v^m - \hat v) = 2 (v^m - v^l)
       \\
       & = 2 w^l \left(\psi(\rho^m) - \psi(\rho^l)\right)
       \\
       & \le 2 w^l C_\psi \modulo{\rho^l - \rho^m},
       \\
       & \le 2 w^l C_\psi \frac{ F_{\alpha,1}(w_{\max}) + R}{w_{\min} \bar \lambda}
         (u^+ - u^-),
    \\
    \Delta \mathcal N(\bar t)
      & \le 1 + \frac{V_{\max}}{\nu}.
     \end{align*}
     Here $\xi^- = u^-$, $\xi^+ = u^+$, and so
     $\modulo{\Delta \xi} \le \modulo{u^- - u^+}$.
   }
    
\item $\rho^m v^m > \varphi_{w^l, u^+}(\rho^m)$.
  In this case there is a production of a first family wave $((\hat{\rho}, w^l), ({\hat{\rho}^{+}}, w^l))$, of a \nfw wave $(({\hat{\rho}^{+}}, w^l), ({\check{\rho}^{+}}, w^l))$ and of a linear wave connecting $({\check{\rho}^{+}}, w^l)$ with $(\check{\rho}, {w^l})$. For the functionals~(\ref{eq:funct_F_w})-(\ref{eq:numbers-wave-func}),
  since  $\check{v}^{+} > \hat{v}^{+}$ and $\check{v} > \hat{v}$,
  by~\Cref{le:lip-v-rho}, we have
  \begin{align*}
    \Delta \mathcal F_w (\bar t)
      & = 0\,,
    \\
    \Delta \mathcal F_{\tilde v} (\bar t)
      & = \modulo{\hat{v}- \hat{v}^{+}} + \modulo{\hat{v}^{+} - \check{v}^{+}}
        + \modulo{\check{v}^{+} - \check{v}}
        - 2 \modulo{\hat{v}^{+} - \check{v}^{+}}
    \\
      & - \modulo{\hat{v} - \check{v}}+ 2 \modulo{\hat{v} - \check{v}}
    \\
      & = \modulo{\hat{v}^{+} -\hat{v}} - \check{v}^{+} + \hat{v}^{+} + \modulo{\check{v}^{+} - \check{v}}+\check{v} - \hat{v}
    \\
      & \le 2 \modulo{\hat{v}^{+}  - \hat{v}}  + 2 \modulo{\check{v}^{+} - \check{v}}  
    \\
      & \le 2 w^l C_\psi \left(\modulo{\hat \rho^+ - \hat \rho} +
        \modulo{\check \rho^+ - \check \rho}\right),
    \\
    \Delta \mathcal N(\bar t)
      & \le 1 + \frac{V_{\max}}{\nu}.
  \end{align*}
  By~\eqref{eq:lip-fhi-check}, we have that
  $\modulo{\check \rho^+ - \check \rho} = 0$.

  By~\Cref{tec_lem}, 
  \begin{equation*}
    \modulo{\hat \rho^+ - \hat \rho} \le
    \frac{F_{\alpha, 1}(w^l) + R}{w^l \bar \lambda} \modulo{u^+ - u^-}.
  \end{equation*}
  Hence
  \begin{align*}
    \Delta \mathcal F_{\tilde v} (\bar t)
    & \le 2 w^l C_\psi \frac{F_{\alpha, 1}(w^l) + R}{w^l \bar \lambda}
      \modulo{u^+ - u^-}.
  \end{align*}
  Here $\xi^- = u^-$, $\xi^+ = u^+$, and so
  $\modulo{\Delta \xi} \le \modulo{u^- - u^+}$.
\end{enumerate}
This completes the proof.
\end{proof}

\begin{lemma}
  \label{Control3}
  \revv{Assume that, at time $\bar t > 0$, the control function $u$ jumps from
    $u^{-} = V_{\max}$ to $u^{+} = u(\bar t +)$ and that we have a \snfw
    at time $\bar  t-$, connecting $(\rho^l, w^l) \in F \cap C$ with
    $(\rho^r, w^l) \in F \setminus C$ and such that
    $\rho^r = \check \rho(w^l, \sigma)$ for every $\sigma \in [0, V_{\max}[$.}

  \revv{Then, at time $\bar t+$ we have a \nfw and the number
      of waves increases.
      The production is \textbf{\snfw /$1$-\nfw}.
      Moreover $\Delta \mathcal F_w (\bar t)=0$,
      $\Delta\mathcal F_{\tilde v} (\bar t) = 0$, and
      $\Delta \mathcal N(\bar t) = 1$.
      Finally $\modulo{\Delta \xi} = \modulo{u^- - u^+}$.}
\end{lemma}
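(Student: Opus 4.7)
The plan is to apply the constrained Riemann solver $\mathcal{RS}^c\left((\rho^l, w^l),(\rho^r, w^l), u^+\right)$ and track the functionals, in analogy with the proof of \Cref{Control2}. We introduce $v^l = \tilde v(\rho^l, w^l)$, $v^r = \tilde v(\rho^r, w^l)$, $\hat \rho = \hat \rho(w^l, u^+)$, $\check \rho = \check \rho(w^l, u^+)$ and the corresponding $\hat v, \check v$. Two preliminary remarks simplify everything: since $(\rho^l, w^l) \in F \cap C$ we have $v^l = V_{\max}$; combining assumptions (c) and (d) in~\ref{Hyp:H4} yields
\begin{equation*}
  \psi\left(F_{\alpha,1}(w^l)\right) \ge \psi\left(F_{\alpha,1}(w_{\max})\right) > \frac{V_{\max}}{w_{\min}} \ge \frac{V_{\max}}{w^l},
\end{equation*}
so $v^r > V_{\max}$ and $\rho^r < \rho^l$; and by~\eqref{eq:check-rho} we have $\check \rho = F_{\alpha,1}(w^l) = \rho^r$, whence $\check v = v^r$.

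Next we verify that case~$2$ of the constrained Riemann solver applies. The classical Riemann solution between the two free-phase states $(\rho^l, w^l)$ and $(\rho^r, w^l)$ is a contact discontinuity travelling at speed $V_{\max}$, so its trace at $x/t = u^+ < V_{\max}$ coincides with the left state $\rho^l$. A direct computation gives
\begin{equation*}
  f_1(\rho^l, w^l) - \varphi_{w^l, u^+}(\rho^l) = (V_{\max} - u^+)\left(\rho^l - F_{\alpha,1}(w^l)\right) > 0.
\end{equation*}
Hence $\mathcal{RS}^c$ returns, on the left of the AV, the classical solution from $(\rho^l, w^l)$ to $(\hat\rho, w^l)$, which collapses to a single $1$-shock because $\hat\rho > \rho^l$; then the \nfw from $(\hat\rho, w^l)$ to $(\check\rho, w^l) = (\rho^r, w^l)$ at speed $u^+$; and no wave on the right, since $\check\rho = \rho^r$. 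This is precisely the production \textbf{\snfw/$1$-\nfw}, and in particular $\Delta \mathcal N = 1$.

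The rest is a bookkeeping computation. $\Delta \mathcal F_w = 0$ is immediate, since every state involved carries the same value $w^l$. For $\Delta \mathcal F_{\tilde v}$, we exploit the ordering $\hat v < V_{\max} = v^l < v^r = \check v$: before the jump the \snfw at the AV contributes $\modulo{v^l - v^r} - 2\modulo{v^l - v^r} = V_{\max} - v^r$; after the jump, the $1$-shock (away from the AV) contributes $\modulo{v^l - \hat v} = V_{\max} - \hat v$, while the \nfw (at the AV) contributes $\modulo{\hat v - \check v} - 2\modulo{\hat v - \check v} = \hat v - v^r$, summing again to $V_{\max} - v^r$. Hence $\Delta \mathcal F_{\tilde v} = 0$. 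Finally, $\xi^- = V_{\max}$ and $\xi^+ = \min\{u^+, V_{\max}\} = u^+$ (since the right trace of $v$ at the AV after the jump is $V_{\max} \ge u^+$), so $\modulo{\Delta \xi} = \modulo{u^- - u^+}$. The only subtle point is the sign handling in $\mathcal F_{\tilde v}$ around the AV position, which is cleanly settled by the strict inequalities $\hat v < V_{\max} < v^r$; beyond that, no interaction estimate needs to be bounded and everything cancels exactly.
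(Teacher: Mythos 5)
Your proof is correct and follows essentially the same route as the paper: apply $\mathcal{RS}^c\left((\rho^l,w^l),(\rho^r,w^l),u^+\right)$, identify the production \textbf{\snfw/$1$-\nfw}, and check the functionals using $\hat v < V_{\max} = v^l \le v^r = \check v$, with the same cancellation in $\mathcal F_{\tilde v}$ and the same computation $\xi^-=V_{\max}$, $\xi^+=u^+$. The only difference is that you verify explicitly that the second case of the constrained Riemann solver applies (via $f_1(\rho^l,w^l)-\varphi_{w^l,u^+}(\rho^l)=(V_{\max}-u^+)(\rho^l-F_{\alpha,1}(w^l))>0$), a detail the paper leaves implicit after noting $\rho^r=\check\rho$.
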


\begin{proof}
  \revv{ We use the following notations:
    \begin{equation*}
      \begin{array}{r@{\,=\,}l@{\quad}r@{\,=\,}l@{\quad}r@{\,=\,}l@{\quad}
        r@{\,=\,}l}
        u^-
        & u\left(\bar t-\right),
        & u^+
        & u\left(\bar t+\right),
        & \check \rho
        & \check \rho (w^l,  u^+),
        & \hat \rho
        & \hat \rho (w^l,  u^+),
          \vspace{.2cm}\\
        v^l
        & \tilde v(\rho^l, w^l),
        & v^r
        & \tilde v(\rho^r, w^l),
        & \check v
        & \tilde v(\check \rho, w^l),
        & \hat{v}
        & \tilde v(\hat{\rho}, {w^l}).
          \vspace{.2cm}\\
      \end{array}
    \end{equation*}
    At time $\bar t$, we need to consider the Riemann solver
    \begin{equation*}
      \mathcal{RS}^c \left(({\rho^l}, w^l),
        ({\rho^r}, w^l), u^+ \right).
    \end{equation*}
    Since, by assumption, $\rho^r = \check \rho$, we deduce that there
    is a production of a first family (shock) wave
    $(({\rho^l}, w^l), (\hat \rho, w^l))$ and of a \nfw connecting
    $(\hat \rho, w^l)$ with $({\rho^r}, {w^l})$.  }

   \revv{
     For the functionals~(\ref{eq:funct_F_w})-(\ref{eq:numbers-wave-func}),
     since $\hat v < v^\ell = V_{\max} \le \check v = v^r$ and the
     wave of the first family is a shock,
     we have
     \begin{align*}
       \Delta \mathcal F_w (\bar t)
       & = 0\,,
       \\
       \Delta \mathcal F_{\tilde v} (\bar t)
       & = \modulo{\hat{v}- v^l} + \modulo{\hat{v} - {v^r}}
        - 2 \modulo{\hat {v} - {v^r}}
         - \modulo{{v^l} - {v^r}}+ 2 \modulo{{v^l} - {v^r}}
       \\
       & = \left(v^l - \hat{v}\right) - \left(v^r - \hat{v}\right)
         + \left(v^r - v^l\right) = 0
       \\
       \Delta \mathcal N(\bar t)
       & = 1.
     \end{align*}
     Here $\xi^- = u^- = V_{\max}$, $\xi^+ = u^+$, and so
     $\modulo{\Delta \xi} = \modulo{u^- - u^+}$, concluding the proof.
   }
\end{proof}

We collect all the estimates due to the change of the control
in~\Cref{tab:control-changes-estimates}.
\begin{table}
  \centering
  \begin{tabular}{|c|c|c|c|l|}
    \hline
    Waves' type
    & $\Delta \mathcal F_w$
    & $\Delta \mathcal F_v$
    & $\Delta \mathcal N$
    & Lemma
    \\
    \hline \hline
    \scriptsize    \textbf{$\mathcal{FW}$/$\mathcal{FW}$}
    &
      \multirow{2}{*}{$= 0$}
    &
      \multirow{2}{*}{$= 0$}
    &
      $= 0$
    & \multirow{2}{*}{\scriptsize\Cref{Control1}}
    \\\scriptsize
    \textbf{\textbf{\fw /$1$-\nfw -$\mathcal{PT}$}}
    &
    &
    &
      $=2$
    & 
    \\
    \hline\scriptsize
    \revv{\textbf{\textbf{\nfw /$1$--\snfw ($\mathcal{LW}$)}}}
    &
      \multirow{2}{*}{$=0$}
    &
      \multirow{2}{*}{$\le O(1) \modulo{u^+ - u^+}$}
    &
      \multirow{2}{*}{$\le 1+ \frac{V_{\max}}{\nu}$}
    & \multirow{2}{*}{\scriptsize\Cref{Control2}}
    \\
    \scriptsize
    \textbf{\textbf{\nfw /$1$-\nfw -$\mathcal{LW}$}}
    &&&&
    \\
    \hline
    \scriptsize
    \revv{\textbf{\snfw /$1$--\nfw}}
    &
      \revv{$= 0$}
    &
      \revv{$= 0$}
    &
      \revv{$= 1$}
    &
      \revv{\scriptsize\Cref{Control3}}
    \\
    \hline
  \end{tabular}
  \caption{The variation of the functionals $\mathcal F_w$,
    $\mathcal F_v$, and $\mathcal N$ due to control changes.
    The Landau symbol $O(1)$ denotes a constant; see \Cref{Control2}
    for the precise
    expression.}
  \label{tab:control-changes-estimates}
\end{table}

\subsection{Existence of a wave-front tracking approximate solution}
In this part we deal with the existence of a wave-front tracking approximate
solution, in the sense of \Cref{def:epswf}.

\begin{proposition}
  \label{prop:existence-wft}
  For every $\nu \in \naturali \setminus \left\{0\right\}$,
  the construction illustrated in~\Cref{sse:wft-12} produces a wave-front
  tracking approximate solution, defined for every time $t \ge 0$.
\end{proposition}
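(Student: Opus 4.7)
First I verify that the algorithm of~\Cref{sse:wft-12} produces, between consecutive event times, a piecewise-constant function in $(\rho,w)$ whose profile consists of finitely many classical fronts (discretized first-family rarefactions, shocks, phase transitions, linear waves) together with the AV front. Items~1--3 and~5 of~\Cref{def:epswf} follow directly from the piecewise-constant choice of $(\rho_{0,\nu}, w_{0,\nu}, u_\nu)$ and the $1/\nu$-discretization of rarefactions; items~4, 6 and~7 hold locally at every event since the constrained Riemann solver $\mathcal{RS}^c$ is used, whose explicit construction automatically enforces the ODE and the flux constraint. The substantive content is therefore that on every $[0,T]$ only finitely many events (classical interactions, wave--AV collisions, control jumps) occur and the approximate profile retains finitely many fronts of uniformly bounded total variation.

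To control the total variation, I would introduce a Glimm-type functional
\begin{equation*}
  \Upsilon(t) = \mathcal{F}_w(t) + K_1 \mathcal{F}_{\tilde v}(t) + K_2 \, \tv\left(u_\nu;[t,+\infty)\right) + K_3 Q(t),
\end{equation*}
where $Q(t)$ is the standard interaction potential (the sum over pairs of approaching classical fronts), and $K_1, K_2, K_3 > 0$ are chosen so large that~\Cref{tab:interaction-estimates,tab:control-changes-estimates} yield $\Delta \Upsilon(\bar t) \leq 0$ at every event. The only positive contributions are the $O(1)\modulo{w^l - w^r}$ increments of $\mathcal{F}_{\tilde v}$ at $2$-\nfw events (\Cref{NY1}) and the $O(1)\modulo{u^+ - u^-}$ increments at the control-change events of~\Cref{Control2,Control3}. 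The latter are absorbed by the drop $-\modulo{u^+ - u^-}$ of $\tv(u_\nu;[\bar t,+\infty))$; the former are controlled by observing that each $2$-\nfw event strictly converts a second-family front into a linear wave carrying the same $w$-jump, so that the cumulative increment is bounded by $\mathcal{F}_w(0)$ plus the total strength of second-family fronts produced by $\mathcal{LW}$-$\mathcal{PT}/\mathcal{PT}$-$2$ interactions, the latter being itself bounded through $Q$.

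Finally, the number of fronts $\mathcal{N}$ can only increase at $2$-\nfw events and at control-change events producing a \nfw or \snfw, each contributing at most $1 + V_{\max}/\nu$ new fronts. Since control jumps are finite on $[0,T]$ (by piecewise constancy of $u_\nu$) and $2$-\nfw events are finite by the argument above, $\mathcal{N}(t)$ stays finite on $[0,T]$; combined with the boundedness of $\Upsilon$, this rules out any accumulation of events and hence extends the construction to all $t \geq 0$. The main obstacle is a delicate calibration of $K_1, K_2, K_3$ so that $\Upsilon$ is genuinely non-increasing at every interaction listed in Tables~\ref{tab:interaction-estimates}--\ref{tab:control-changes-estimates} simultaneously, together with the counting argument that bounds the total number of $2$-\nfw events by the finite number of second-family fronts ever present in the approximate solution.
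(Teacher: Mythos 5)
Your plan has a genuine gap in the finiteness argument, which is the actual content of this proposition. First, your counting of front-creating events is incomplete: you claim $\mathcal N$ can only increase at \textbf{$2$-\nfw} events and at control jumps, but $\mathcal N$ also increases when a classical wave hits the \emph{fictitious} AV front and a non-classical discontinuity is created, namely the interactions \textbf{$2$-$\mathcal{FW}$/$1$-\nfw-$\mathcal{PT}$-$2$} ($\Delta\mathcal N=3$, \Cref{Y1}), \textbf{$\mathcal{LW}$-\fw/$\mathcal{PT}$-\nfw-$\mathcal{LW}$} ($\Delta\mathcal N=2$, \Cref{Y3}) and \textbf{\fw-$1$/$1$-\nfw-$\mathcal{PT}$} ($\Delta\mathcal N=2$, \Cref{Y4}). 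Bounding how often these occur is exactly where the paper's proof does its work: it observes that no new first-family waves can be created to the right of the AV and no new linear waves to the left of the AV, so $\mathcal N_1^+$ and $\mathcal N_{\mathcal{LW}}^-$ are non-increasing, and these monotonicity facts (together with the observation that new $2$-waves arise only from \textbf{$\mathcal{LW}$-$\mathcal{PT}$/$\mathcal{PT}$-$2$}) bound the number of AV collisions of each type. Your proposal never establishes, nor uses, these structural facts, so the finiteness of fronts does not follow from what you wrote. Second, even granting bounded functionals and finitely many front-creating events, boundedness of $\Upsilon$ does not preclude infinitely many interactions in finite time among events that leave $\mathcal N$ and the functionals unchanged or decreasing: the crossings \textbf{$2$-$1$/$1$-$2$}, \textbf{$\mathcal{PT}$-\fw/\fw-$\mathcal{PT}$}, \textbf{\fw-$\mathcal{PT}$/$\mathcal{PT}$-\fw}, etc., must be counted separately, and the paper devotes the second half of its proof precisely to these (e.g.\ \textbf{$2$-$1$/$1$-$2$} is finite to the right of the AV by the monotonicity of $\mathcal N_1^+$ and to the left because only finitely many $1$-waves are ever generated at the AV).

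There is also a methodological mismatch: the paper deliberately avoids a quadratic interaction potential $Q$, because \Cref{le:estim-funct-w-interaction-PT} already gives $\Delta\mathcal F\le 0$ at all classical interactions with \emph{large} BV data, and the only increases of $\mathcal F_{\tilde v}$ (at \textbf{$2$-\nfw} events and control changes, \Cref{NY1}, \Cref{Control2}) are absorbed linearly by $\mathcal F_w$ and $\tv(u)$. Your calibration of $K_1,K_2,K_3$ so that $\Upsilon$ decreases at every event would have to make the drop of $Q$ dominate linear creations of new waves (e.g.\ the $2$-wave born in \textbf{$\mathcal{LW}$-$\mathcal{PT}$/$\mathcal{PT}$-$2$}) without any smallness assumption on the total variation, which is exactly the regime where Glimm-type quadratic potentials are not known to work; and in any case such a functional bound would still not replace the combinatorial counting of interactions that the statement requires.
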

\begin{proof}
  Fix $\nu \in \naturali \setminus \left\{0\right\}$.
  We need to prove that the total number of waves and
  interactions remain finite.
  By construction $u_\nu$ is piecewise constant with a finite number of
  discontinuities.
  \revv{Note also that, due to \Cref{le:snfw} and to \Cref{Control3},
    the \snfw is superimposed only to a linear wave with left state in
    $F \cap C$ and right state in $F \setminus C$ with the same $w$ coordinate
    as described in \Cref{Control3}.}
  Hence the interactions, described in \Cref{Control1},
  in \Cref{Control2}, and in \Cref{Control3},
  can happen at most a finite number of times
  and \revv{so} the number of new waves,
  generated by the changes of the control,
  is also finite. Thus, without loss of generality, we may assume that
  the control $u_\nu$ is constant. \revv{In particular it is possible
    to assume that \snfw is not present.}

  New waves of the first family can not be generated to the right of the AV;
  see~\Cref{tab:interaction-estimates}. Therefore the functional
  $t \mapsto \mathcal N^+_1(t)$ does not increase and so, for $t \ge 0$,
  \begin{equation}
    \label{eq:N1+}
    \mathcal N_1^+(t) \le \mathcal N_1^+(0+),
  \end{equation}
  which implies that also the interactions, studied in \Cref{Y4},
  can happen at most a finite number of times.

  New $\mathcal{LW}$ waves can not be generated to the left of the AV;
  see~\Cref{tab:interaction-estimates}. Therefore the functional
  $t \mapsto \mathcal N^-_{\mathcal{LW}}(t)$ does not increase
  and so, for $t \ge 0$,
  \begin{equation}
    \label{eq:NLW-}
    \mathcal N_{\mathcal{LW}}^-(t) \le \mathcal N_{\mathcal{LW}}^-(0+),
  \end{equation}
  which implies that also the interactions, studied in \Cref{Y3},
  can happen at most a finite number of times.

  If, at time $t > 0$, there are two $\mathcal{PT}$ waves at the left (or
  at the right) of the AV, then there exists at least one $\mathcal{LW}$ and
  one wave of the first family in between. Thus, for $t > 0$,
  using~\eqref{eq:N1+} and~\eqref{eq:NLW-},
  \begin{align*}
    \mathcal N_{\mathcal{PT}}^-(t)
    & \le \mathcal N_{\mathcal{LW}}^-(t) + 1 \le
      \mathcal N_{\mathcal{LW}}^-(0+) + 1
    \\
    \mathcal N_{\mathcal{PT}}^+(t)
    & \le \mathcal N_{1}^+(t) + 1 \le
      \mathcal N_{1}^+(0+) + 1
  \end{align*}
  and so
  \begin{equation*}
    \mathcal N_{\mathcal{PT}}(t)
    = \mathcal N_{\mathcal{PT}}^-(t) + \mathcal N_{\mathcal{PT}}^+(t)
    \le \mathcal N_{\mathcal{LW}}^-(0+) + \mathcal N_{1}^+(0+) + 2.
  \end{equation*}

  We now focus on the number of interactions of type
  \textbf{$2$-\nfw/$1$-\nfw-$\mathcal{LW}$}, described in~\Cref{NY1}.
  New waves of the second family can be created at a positive time
  only with the interaction
  \textbf{$\mathcal{LW}$-$\mathcal{PT}$/$\mathcal{PT}$-$2$}.
  Then, the number of times the interactions
  \textbf{$2$-\nfw/$1$-\nfw-$\mathcal{LW}$}
  may happen is bounded by
  $\mathcal N_2^-(0+) + \mathcal N_{\mathcal{LW}}^-(0+)$, since~\eqref{eq:NLW-}
  holds.

  With the same reasoning, the interactions, described in~\Cref{Y1},
  can happen at most $\mathcal N_2^-(0+) + \mathcal N_{\mathcal{LW}}^-(0+)$
  times.

  Since the functional $\mathcal N$ strictly increases only in the interactions considered in
  \Cref{Y1}, \Cref{Y3}, \Cref{Y4} and of \Cref{NY1}, then
  \begin{equation}
    \label{eq:N-estimate}
    \begin{split}
      \mathcal N(t)
      & \le \mathcal N(0+)
      + 3 \left(\mathcal N_2^-(0+) + \mathcal N_{\mathcal{LW}}^-(0+)\right)
      + 2 \mathcal N_{\mathcal{LW}}^-(0+)
      \\
      & \quad + 2 \mathcal N_{1}^+(0+)
      + \frac{V_{\max}}{\nu}
      \left(\mathcal N_2^-(0+) + \mathcal N_{\mathcal{LW}}^-(0+)\right)
      \\
      & \le \mathcal N(0+) + 2 \mathcal N_{1}^+(0+)
      + \left(3 + \frac{V_{\max}}{\nu}\right)
      \mathcal N_2^-(0+)
      \\
      & \quad + \left(5 + \frac{V_{\max}}{\nu}\right)
      \mathcal N_{\mathcal{LW}}^-(0+)
      \\
      & \le \left(6 + \frac{V_{\max}}{\nu}\right)
      \mathcal N(0+).
    \end{split}
  \end{equation}
  Since in the interactions described in \Cref{NY2} and in \Cref{NY3}
  the number of waves strictly decreases and since~\eqref{eq:N-estimate} holds,
  then these interactions
  can happen at most a finite number of times.
  Similarly also the interactions
  \textbf{$1$-$1$/$1$} and \textbf{$\mathcal{PT}$-$1$/$\mathcal{PT}$}
  can happen at most a finite number of times.

  We claim now that the number of interactions
  \textbf{$2$-$1$/$1$-$2$} is finite.
  Indeed by~\eqref{eq:N1+} the number of interactions
  \textbf{$2$-$1$/$1$-$2$} at the right of AV is finite. Moreover,
  as already proved, the number of waves of the first family
  generated at a positive time at the AV location is finite;
  thus the number of interactions
  \textbf{$2$-$1$/$1$-$2$} at the left of AV is also finite, proving the
  claim.
  
  Symmetrically also the number of interactions
  \textbf{$\mathcal{LW}$-$\mathcal{PT}$/$\mathcal{PT}$-$2$} is finite.
  This is a consequence of the fact that the number of new
  linear waves generated at the location of the AV is finite.
  Therefore all the interactions described
  in~\Cref{le:estim-funct-w-interaction-PT} are finite.

  It remains to prove that the interactions in \Cref{Y2} and in \Cref{Y2Bis}
  can happen at most a finite number of times.
  Indeed when a phase-transition wave interacts with the fictitious wave,
  the waves cross each other and they can not interact anymore without other
  interactions before. This prevents the possibility of a combination
  of the two interactions happens an infinite number of times.
  
  The proof is so concluded.
\end{proof}

\subsection{Existence of a Solution}
This section deals with the proof of the main result.

\begin{proofof}{\Cref{thm:cauchy-problem}}
  Fix $\left(z_\eps, y_\eps, u_\eps\right)$, an $\varepsilon$-approximate wave-front tracking solution
   to~\eqref{eq:CP},
  in the sense of \Cref{def:epswf}, which exists by~\Cref{prop:existence-wft}.
  By assumptions there exists $M > 0$ such that
  \begin{equation*}
    \mathcal F_w(0+) \le M,
    \qquad
    \mathcal F_{v}(0+) \le M,
    \qquad \textrm{ and } \qquad
    \tv\left(u_\eps\right) \le M.
  \end{equation*}
  In particular, at least passing to a
  subsequence, there exists a control
  $u^* \in \BV\left([0, +\infty[; [0, \overline V]\right)$
  such that
  \begin{equation*}
    u_\eps(t) \to u^*(t)
  \end{equation*}
  for a.e. $t > 0$ as $\eps \to 0$.

  By the interaction estimates in \Cref{2},
  see also \Cref{tab:interaction-estimates}
  and \Cref{tab:control-changes-estimates},
  we deduce that
  \begin{equation*}
    \mathcal F_w(t) = \mathcal F_w(0+) \le M
  \end{equation*}
  for every $t > 0$.
  Moreover the possible increments of the functional $\mathcal F_v$ are
  described in \Cref{NY1} and in \revv{ the two cases of }~\Cref{Control2}.
  In particular, the maximum possible
  increment due to the control's change is proportional to the total variation
  of $u_\eps$; see~\Cref{Control2}.
  Instead the maximum possible increment due to the interactions described
  in \Cref{NY1} is proportional to $\mathcal F_w(0+)$,
  since the waves of the second family interacting with the AV either are
  original waves, i.e. generated at time $t=0$, or are generated by the
  interaction $\mathcal{LW}$-$\mathcal{PT}$/$\mathcal{PT}$-$2$ at the left
  of the AV. The $\mathcal{LW}$ waves can not be generated at positive
  time at the left of the AV and the strength $\mathcal F_v$ of $\mathcal{LW}$
  is in the previous interaction is transferred to the wave of the second
  family.
  This permits to conclude that
  \begin{equation*}
    \mathcal F_{\tilde v}(t) \le O(1) M
  \end{equation*}
  for every $t > 0$, where the Landau symbol $O(1)$ denotes a constant
  not depending on $t$, $\eps$, and $M$.
  
  Hence, at least passing to a
  subsequence, there exist $w^*$ and $\tilde v^*$ such that
  \begin{equation*}
    w\left(z_{\eps}\left(t, \cdot\right)\right) \to w^*(t, \cdot)
    \qquad \textrm{ and }\qquad
    \tilde v\left(z_{\eps}\left(t, \cdot\right)\right) \to
    \tilde v^*(t, \cdot)
  \end{equation*}
  pointwise for a.e. $t > 0$.
  Finally we deduce the existence of $\rho^*(t, \cdot)$ such that
  \begin{equation*}
    z_{\eps}(t, \cdot) \to z^*(t, \cdot) :=
    \left(\rho^*(t, \cdot), w^*(t, \cdot)
    \right)
  \end{equation*}
  pointwise for a.e. $t > 0$.

  Since $\modulo{\dot y_\eps(t)} \le [0, V_{\max}]$ for a.e. $t > 0$,
  Ascoli-Arzelà Theorem~\cite[Theorem~7.25]{MR0385023} implies that, 
  at least passing to a subsequence, there exist a continuous function
  $y^*$ such that
  \begin{equation*}
    y_\eps \to y^*
  \end{equation*}
  uniformly. Moreover by the estimates in \Cref{2}, we deduce that
  \begin{equation*}
    \tv\left(\dot y_\eps\right) \le \sup_{t>0}\mathcal F_v(t) + \tv(u_\eps)
    \le O(1) M.
  \end{equation*}
  Therefore we deduce that, at least passing to a subsequence,
 \begin{equation*}
    y_\eps \to y^*
  \end{equation*}
  in $\Wloc{1,1}\left([0, +\infty); \reali\right)$.

  It remains to prove that $\left(z^*, y^*\right)$
  satisfies \Cref{def:weak-solution-1}. The points 1., 2., and 3. are
  straightforward.

  \textbf{Point 4. of \Cref{def:weak-solution-1}}.
  We need to prove, for a.e. $t >0$,
  \begin{equation}
    \label{eq:p4}
    y^*(t) = y_{0} +
    \int_{0}^{t}\min \left\{u^*(s),
      v \left(s,y^*(s)+\right)\right\}\,ds\,.
  \end{equation}
  By construction we have, for a.e. $t >0$,
  \begin{equation}
    \label{eq:p4-eps}
    y_\eps(t) = y_{0} +
    \int_{0}^{t}\min \left\{u_\eps(s),
      v \left(s,y_\eps(s)+\right)\right\}\,ds\,.
  \end{equation}
  We have $u_\eps \to u^*$ and $y_\eps \to y^*$ a.e. as $\eps \to 0$.
  Moreover the curve $t \mapsto y^*(t)$ is non characteristic for the
  quantity $v$, since the Riemann coordinate $v$ travels with velocity
  given by $\lambda_1 < 0$; see~\ref{Hyp:H3}.
  Hence $v\left(s, y_\eps(s)\right) \to
  v\left(s, y^*(s)\right)$ for a.e. $s > 0$. Thus, passing to the limit
  in~\eqref{eq:p4-eps} as $\eps \to 0$, we deduce~\eqref{eq:p4}.

  \textbf{Point 5. of \Cref{def:weak-solution-1}}.
  We follow the same idea as in~\cite{GaravelloGoatinLiardPiccoli}, based on the fact that $z_\eps$ and $z^*$ are weak solutions
  to the PDE in~\eqref{eq:CP}.
  Without loss of generality, we assume that $z_\eps$ is a sequence with
  all the limit properties highlighted in the previous part of the proof.
  Fix $T > 0$, $\varphi \in \Cc1\left(]0, T[ \times \reali; \reali^+\right)$,
  and consider the sets
  \begin{equation*}
    \begin{split}
      D_l
      & = \left\{(t,x) \in [0, T] \times \reali:\, x < y^*(t) \right\}
      \\
      D_l^\eps
      & = \left\{(t,x) \in [0, T] \times \reali:\, x < y_\eps(t) \right\}
      \\
      I
      & = \left\{t > 0:\, \dot y^*(t) \textrm{ exists, }
        y_{\eps}(t) \to y^*(t) \textrm{ and } \dot y_{\eps}(t) \to \dot y^*(t)
        \textrm{ as } \eps \to 0\right\}.
    \end{split}
  \end{equation*}
  Note that the Lebesgue measure of $I$ is $T$.
  By~\cite[Theorem 2.2]{MR1702637}, for every $\eps$, we deduce that      
  \begin{equation*}
    \begin{split}
      & \quad
      \int_{D_l^\eps} 
      \left(\rho_\eps \partial_t \varphi + (\rho_\eps v(\rho_\eps, w_\eps))
        \partial_x \varphi\right)
      dt\,dx
      \\
      & = \int_0^T \rho_\eps\left(t, y_\eps(t)-\right)
      \left[v_\eps(t,y_\eps(t)-)) -
        \dot y_\eps(t)\right]\varphi(t,y_\eps(t)) dt
    \end{split}
  \end{equation*}
  and
  \begin{equation*}
    \begin{split}
      & \quad
      \int_{D_l} 
      \left(\rho^* \partial_t \varphi + (\rho^* v(\rho^*, w^*))
        \partial_x \varphi\right)
      dt\,dx
      \\
      & = \int_0^T \rho^* \left(t, y^*(t)-\right)
      \left[v^*(t,y^*(t)-)) -
        \dot y^*(t)\right]\varphi(t, y^*(t)) dt.
    \end{split}
  \end{equation*}
  The Dominated Convergence Theorem implies
  \begin{equation*}
    \begin{split}
      & \lim_{\eps\to 0}
      \int_{D_l^\eps} 
      \left(\rho_\eps \partial_t \varphi + (\rho_\eps v(\rho_\eps, w_\eps))
        \partial_x \varphi\right)
      dt\,dx
      \\
      =
      & \int_{D_l} 
      \left(\rho^* \partial_t \varphi + (\rho^* v(\rho^*, w^*))
        \partial_x \varphi\right)
      dt\,dx,
    \end{split}
  \end{equation*}
  so that
  \begin{equation*}
    \begin{split}
      & \int_0^T \rho^* \left(t, y^*(t)-\right)
      \left[v^*(t,y^*(t)-)) -
        \dot y^*(t)\right]\varphi(t, y^*(t)) dt
      \\
      =
      & \lim_{\eps \to 0}
      \int_0^T \rho_\eps\left(t, y_\eps(t)-\right)
      \left[v_\eps(t,y_\eps(t)-)) -
        \dot y_\eps(t)\right]\varphi(t,y_\eps(t)) dt
      \\
      =
      & \lim_{\eps \to 0}
      \int_I \rho_\eps\left(t, y_\eps(t)-\right)
      \left[v_\eps(t,y_\eps(t)-)) -
        \dot y_\eps(t)\right]\varphi(t,y_\eps(t)) dt.
    \end{split}
  \end{equation*}
  Define
  \begin{equation*}
    I_1 = \left\{t \in I: w_\eps(t, y_\eps(t)-) \to w^*(t, y^*(t))
      \textrm{ as } \eps \to 0\right\}.
  \end{equation*}
  The construction of approximate solutions implies that
  \begin{equation*}
    \begin{split}
      & \int_{I_1} \rho_\eps\left(t, y_\eps(t)-\right)
      \left[v_\eps(t,y_\eps(t)-)) -
        \dot y_\eps(t)\right]\varphi(t,y_\eps(t)) dt
      \\
      \le
      & \int_{I_1} F_{\alpha}(w_\eps(t, y_\eps(t)-), \dot y_\eps (t))
      \varphi(t,y_\eps(t)) dt
    \end{split}
  \end{equation*}
  and so, using the Dominated Convergence Theorem,
  \begin{equation*}
    \begin{split}
      & \liminf_{\eps \to 0} \int_{I_1} \rho_\eps\left(t, y_\eps(t)-\right)
      \left[v_\eps(t,y_\eps(t)-)) -
        \dot y_\eps(t)\right]\varphi(t,y_\eps(t)) dt
      \\
      \le
      & \int_{I_1} F_{\alpha}(w^*(t, y^*(t)-), \dot y^* (t))
      \varphi(t,y^*(t)) dt.
    \end{split}
  \end{equation*}
  Consider now $\bar t \in I \setminus I_{1}$.
  At least passing to a subsequence, we may assume that
  $w_\eps(\bar t, y_\eps(\bar t))$ is uniformly far from
  $w^*(\bar t, y^*(\bar t))$, which implies that the boundary at $\bar t$ is
  characteristic, so that
  \begin{equation*}
    v_\eps\left(\bar t, y_\eps(\bar t)\right) \to \dot y^*(\bar t)
  \end{equation*}
  as $\eps \to 0$.
  The Dominated Convergence Theorem implies that
  \begin{equation*}
    \begin{split}
      & \lim_{\eps \to 0} \int_{I \setminus I_1} \rho_\eps\left(t, y_\eps(t)-\right)
      \left[v_\eps(t,y_\eps(t)-)) -
        \dot y_\eps(t)\right]\varphi(t,y_\eps(t)) dt
      \\
      =
      & 0 \le \int_{I \infty I_1} F_{\alpha}(w^*(t, y^*(t)-), \dot y^* (t))
      \varphi(t,y^*(t)) dt.
    \end{split}
  \end{equation*}
  Therefore
  \begin{equation*}
    \begin{split}
      & \int_0^T \rho^* \left(t, y^*(t)-\right)
      \left[v^*(t,y^*(t)-)) -
        \dot y^*(t)\right]\varphi(t, y^*(t)) dt
      \\
      =
      & \lim_{\eps \to 0} \int_{I} \rho_\eps\left(t, y_\eps(t)-\right)
      \left[v_\eps(t,y_\eps(t)-)) -
        \dot y_\eps(t)\right]\varphi(t,y_\eps(t)) dt
      \\
      =
      & \liminf_{\eps \to 0} \int_{I_1} \rho_\eps\left(t, y_\eps(t)-\right)
      \left[v_\eps(t,y_\eps(t)-)) -
        \dot y_\eps(t)\right]\varphi(t,y_\eps(t)) dt
      \\
      \le
      & \int_{I_1} F_{\alpha}(w^*(t, y^*(t)-), \dot y^* (t))
      \varphi(t,y^*(t)) dt
      \\
      \le
      & \int_{I} F_{\alpha}(w^*(t, y^*(t)-), \dot y^* (t))
      \varphi(t,y^*(t)) dt.
    \end{split}
  \end{equation*}
  Since $\varphi \ge 0$, we deduce that
  \begin{equation*}
    \begin{split}
      & \rho^* \left(t, y^*(t)-\right)
      \left[v^*(t,y^*(t)-)) -
        \dot y^*(t)\right]\varphi(t, y^*(t))
      \\
      \le
      & F_{\alpha}(w^*(t, y^*(t)-), \dot y^* (t))
      \varphi(t,y^*(t))
    \end{split}
  \end{equation*}
  for a.e. $t \in [0, T]$, as required.
  A similar estimate holds for the right traces and so the point~5.
  of \Cref{def:weak-solution-1}
  is satisfied. The proof is concluded.
\end{proofof}

\section{Appendix: Technical Lemmas}
We state the following results about $\tilde v$.
\begin{lemma}
  \label{le:lip-v-rho}
  Assume~\ref{Hyp:H2} and fix $w \in [w_{\min}, w_{\max}]$. Then
  $\tilde v$ is Lipschitz continuous with respect to $\rho$ with
  Lipschitz constant $w C_\psi$.
\end{lemma}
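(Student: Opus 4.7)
The plan is to apply the mean value theorem to $\psi$ directly. Fix $w \in [w_{\min}, w_{\max}]$ and take $\rho_1, \rho_2 \in [0, R]$. Since $\tilde v(\rho, w) = w\,\psi(\rho)$, one computes
\begin{equation*}
  \modulo{\tilde v(\rho_1, w) - \tilde v(\rho_2, w)}
  = w\,\modulo{\psi(\rho_1) - \psi(\rho_2)}.
\end{equation*}
By hypothesis~\ref{Hyp:H2}, $\psi \in \C{2}([0,R];[0,1])$ with $c_\psi \le -\psi'(\rho) \le C_\psi$ for every $\rho \in [0,R]$, hence in particular $\modulo{\psi'(\rho)} \le C_\psi$ on $[0, R]$.

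Applying the mean value theorem (or equivalently the fundamental theorem of calculus), there exists $\xi$ between $\rho_1$ and $\rho_2$ such that $\psi(\rho_1) - \psi(\rho_2) = \psi'(\xi)(\rho_1 - \rho_2)$. Combining the two displayed inequalities yields
\begin{equation*}
  \modulo{\tilde v(\rho_1, w) - \tilde v(\rho_2, w)}
  \le w\,C_\psi \modulo{\rho_1 - \rho_2},
\end{equation*}
which is precisely the claimed Lipschitz estimate. There is no real obstacle here: the statement is a one-line consequence of the $\C1$ bound on $\psi$ provided by~\ref{Hyp:H2}, and the factor $w$ arises simply because $w$ is frozen and appears linearly in the definition of $\tilde v$.
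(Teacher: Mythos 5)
Your proof is correct and follows essentially the same approach as the paper, which simply bounds $\modulo{\partial_\rho \tilde v} = \modulo{w\,\psi'(\rho)} \le w\,C_\psi$ using~\ref{Hyp:H2}; your mean value theorem step just makes the passage from the derivative bound to the Lipschitz estimate explicit. Nothing is missing.
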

\begin{proof}
  We have
  \begin{equation*}
    \modulo{\partial_\rho \tilde v} = \modulo{w \psi'(\rho)}
    \le w C_{\psi},   
  \end{equation*}
  completing the proof.
\end{proof}

\begin{lemma}
  \label{le:lip-rho-w}
  Assume~\ref{Hyp:H2} and fix $\tilde v > 0$. Then the function
  $w(\rho) = \frac{\tilde v}{\psi(\rho)}$ is invertible and the inverse function is
  Lipschitz continuous with
  Lipschitz constant $\frac{1}{\tilde v c_\psi}$.
\end{lemma}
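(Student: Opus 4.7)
The plan is to argue by a direct derivative computation together with the inverse function theorem, exploiting the fact that the hypotheses in \ref{Hyp:H2} give both a lower bound on $-\psi'$ and a uniform upper bound $\psi \le 1$ on the whole interval $[0, R]$.

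First I would note that, since $\psi(0) = 1$ and $-\psi'(\rho) \ge c_\psi > 0$, the function $\psi$ is strictly decreasing on $[0, R]$ with $\psi(\rho) \in (0, 1]$ for $\rho \in [0, R)$. Consequently, on the domain where $\tilde v / \psi(\rho)$ makes sense, the map $\rho \mapsto w(\rho) = \tilde v / \psi(\rho)$ is of class $\C{1}$ and
\begin{equation*}
  w'(\rho) \;=\; -\,\frac{\tilde v\, \psi'(\rho)}{\psi(\rho)^{2}}
  \;\ge\; \frac{\tilde v\, c_\psi}{\psi(\rho)^{2}}
  \;\ge\; \tilde v\, c_\psi,
\end{equation*}
where the final inequality uses $\psi(\rho) \le 1$, so $1/\psi(\rho)^2 \ge 1$. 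In particular $w'(\rho) \ge \tilde v c_\psi > 0$, so $w$ is strictly increasing and hence injective on its domain.

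Next, by the inverse function theorem, $w^{-1}$ is differentiable on the image of $w$ and
\begin{equation*}
  \left(w^{-1}\right)'\!(s) \;=\; \frac{1}{w'\!\left(w^{-1}(s)\right)}
  \;\le\; \frac{1}{\tilde v\, c_\psi}
\end{equation*}
for every $s$ in the image. Since the derivative of $w^{-1}$ is bounded by $1/(\tilde v c_\psi)$, the mean value theorem yields the claimed Lipschitz estimate. This concludes the proof.

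I do not expect any real obstacle here: the only point to watch is that the bound $\psi \le 1$ on all of $[0, R]$, which forces $1/\psi^2 \ge 1$ and therefore allows the constant $\tilde v c_\psi$ to appear without any extra factor, is a direct consequence of $\psi(0) = 1$ combined with the strict monotonicity of $\psi$ guaranteed by the lower bound on $-\psi'$ in~\ref{Hyp:H2}.
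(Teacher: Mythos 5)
Your proposal is correct and follows essentially the same route as the paper: the paper likewise bounds $\modulo{w'(\rho)} = \frac{\tilde v}{\psi^2(\rho)}\modulo{\psi'(\rho)} \ge \tilde v c_\psi$ using $\psi(\rho) \le \psi(0) = 1$ and then concludes the Lipschitz bound for the inverse. Your version just spells out the invertibility and inverse-function-theorem step that the paper leaves implicit.
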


\begin{proof}
  We have
  \begin{equation*}
    \modulo{w'\left(\rho\right)} = \frac{\tilde v}{\psi^2(\rho)}\modulo{\psi'(\rho)}
    \ge \frac{\tilde v}{\psi^2(0)} c_\psi = \tilde v c_\psi, 
  \end{equation*}
  concluding the proof.
\end{proof}

\begin{lemma}
  \label{tec_lem}
  Assume~\ref{Hyp:H2} and~\ref{Hyp:H3}.
  Fix $\bar w \in [w_{\min}, w_{\max}]$
  and consider, in the plane
  $\left(\rho, \rho v\right)$,
  $\rho \mapsto \mathcal L_1\left(\rho; \bar w\right) = \rho \bar w \psi(\rho)$
  the Lax curve of
  the first family with $w = \bar w$.
  Let $\rho_1 < \rho_2$ such that
  $\left(\rho_1, \mathcal L_1\left(\rho_1; \bar w\right)\right) \in C$
  and $\left(\rho_2, \mathcal L_1\left(\rho_2; \bar w\right)\right) \in C$.
  \begin{enumerate}
  \item Fix $u \in [0, V_{\max}[$. Define $F_1 > F_2$ such that
    \begin{equation}
      \label{eq:rho-L_1}
      \mathcal L_1\left(\rho_1; \bar w\right) = F_1 + u \rho_1
      \qquad \textrm{ and } \qquad
      \mathcal L_1\left(\rho_2; \bar w\right) = F_2 + u \rho_2.
    \end{equation}
    Then
    \begin{equation*}
      0 < \rho_2 - \rho_1 \le \frac{1}{\bar w \bar \lambda + u}
      \left(F_1 - F_2\right),
    \end{equation*}
    where $\bar \lambda$ is defined in~\ref{Hyp:H3}.
    
  \item Define {$u_1 > u_2$} such that
    \begin{equation}
      \label{eq:rho-L_1-2}
      \begin{split}
      \mathcal L_1\left(\rho_1; \bar w\right) = F_{\alpha}(\bar w, u_1)
      + u_1 \rho_1
      \\
      \mathcal L_1\left(\rho_2; \bar w\right) = F_{\alpha}(\bar w, u_2)
      + u_2 \rho_2.
    \end{split}
    \end{equation}
    Then
    \begin{equation*}
      \modulo{\rho_2 - \rho_1} \le \frac{R + F_{\alpha, 1}(\bar w)}
      {\bar w \bar \lambda + u_2}
      \left(u_1 - u_2\right),
    \end{equation*}
    where $\bar \lambda$ is defined in~\ref{Hyp:H3}.
  \end{enumerate}
\end{lemma}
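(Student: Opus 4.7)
The plan is to derive both bounds by applying the mean value theorem to the Lax curve $\mathcal{L}_1(\rho;\bar w) = \rho \bar w \psi(\rho)$, whose derivative coincides with the first eigenvalue $\lambda_1(\rho,\bar w) = \bar w(\psi(\rho)+\rho\psi'(\rho))$. The geometric content is transparent: in both items, $\rho_1$ and $\rho_2$ are the abscissae at which $\mathcal{L}_1$ intersects an affine line of slope $u$ (or $u_i$) in the flux plane, and the separation $\rho_2 - \rho_1$ is controlled by how quickly the Lax curve pulls away from such a line. The uniform lower bound on $|\lambda_1|$ provided by \ref{Hyp:H3} gives exactly this control.

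For item~1, I subtract the two identities in \eqref{eq:rho-L_1} to get
\[
\mathcal{L}_1(\rho_2;\bar w) - \mathcal{L}_1(\rho_1;\bar w) = (F_2 - F_1) + u(\rho_2 - \rho_1).
\]
The mean value theorem applied to the smooth function $\mathcal{L}_1(\cdot;\bar w)$ yields some $\xi \in (\rho_1,\rho_2)$ with $\mathcal{L}_1(\rho_2;\bar w) - \mathcal{L}_1(\rho_1;\bar w) = \lambda_1(\xi,\bar w)(\rho_2-\rho_1)$, so that
\[
\bigl(u - \lambda_1(\xi,\bar w)\bigr)(\rho_2 - \rho_1) = F_1 - F_2 > 0.
\]
Positivity of $\rho_2 - \rho_1$ then follows because \ref{Hyp:H3} gives $u - \lambda_1(\xi,\bar w) \ge \bar w \bar\lambda + u > 0$, and dividing produces the claimed upper bound.

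For item~2, I exploit the affine structure of $F_\alpha$ afforded by \ref{Hyp:H4}(a), namely $F_\alpha(\bar w,\sigma) = F_{\alpha,1}(\bar w)(V_{\max}-\sigma)$. Subtracting the identities in \eqref{eq:rho-L_1-2} and rewriting $u_2\rho_2 - u_1\rho_1 = u_2(\rho_2 - \rho_1) - (u_1 - u_2)\rho_1$ so as to isolate $u_1-u_2$ on the right yields
\[
\mathcal{L}_1(\rho_2;\bar w) - \mathcal{L}_1(\rho_1;\bar w) - u_2(\rho_2 - \rho_1) = \bigl(F_{\alpha,1}(\bar w) - \rho_1\bigr)(u_1 - u_2).
\]
Applying the mean value theorem to the left-hand side exactly as in item~1 gives $(\lambda_1(\xi,\bar w) - u_2)(\rho_2-\rho_1) = (F_{\alpha,1}(\bar w) - \rho_1)(u_1 - u_2)$; taking absolute values and using $|F_{\alpha,1}(\bar w) - \rho_1| \le F_{\alpha,1}(\bar w) + R$ (since $\rho_1 \in [0,R]$ and $F_{\alpha,1}(\bar w)\ge 0$) produces the claim.

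No step is genuinely difficult; the only point requiring care is the algebraic rearrangement in item~2 that isolates the factor $u_1 - u_2$ while leaving the combination $\mathcal{L}_1(\rho_2;\bar w) - \mathcal{L}_1(\rho_1;\bar w) - u_2(\rho_2-\rho_1)$ on the left, so that a single mean-value estimate of $\lambda_1 - u_2$ delivers the denominator $\bar w\bar\lambda + u_2$ appearing in the statement.
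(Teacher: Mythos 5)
Your proof is correct, and it rests on the same key estimate as the paper's: the uniform bound from \ref{Hyp:H3} on the slope of $\rho \mapsto \mathcal L_1(\rho;\bar w) - u\rho$. The packaging differs slightly. The paper introduces $G(\rho;\bar w,u) = \mathcal L_1(\rho;\bar w) - u\rho$ (resp.\ $G - F_\alpha(\bar w,u)$), shows $\partial_\rho G \le -\bar w \bar\lambda - u < 0$, and concludes via invertibility of $G$ with a Lipschitz bound on the inverse for item~1 and a (rather terse) appeal to the Implicit Function Theorem for item~2; you instead subtract the two defining identities and apply the mean value theorem to $\mathcal L_1$, whose derivative is $\lambda_1(\cdot,\bar w)$. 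This buys you a more elementary finite-increment argument, and in item~2 your explicit rearrangement isolating $(F_{\alpha,1}(\bar w)-\rho_1)(u_1-u_2)$, together with the bound $\modulo{F_{\alpha,1}(\bar w)-\rho_1} \le F_{\alpha,1}(\bar w)+R$, actually spells out the ``analogous estimates'' that the paper leaves implicit (there, these come from bounding $\partial_u G = F_{\alpha,1}(\bar w) - \rho$ in the implicit-function derivative). Two minor points, neither a gap: your passage from \ref{Hyp:H3} to the factor $\bar w\bar\lambda$ in the denominator reads the hypothesis as a bound on $\frac{d}{d\rho}(\rho\psi(\rho))$ rather than on $\lambda_1$ itself, but this is exactly the paper's own usage; and the mean-value point $\xi$ lies between $\rho_1$ and $\rho_2$, hence $(\xi,\bar w) \in C$ because $C$ at fixed $\bar w$ is the set where $\psi(\rho) \le V_{\max}/\bar w$ and $\psi$ is decreasing, so the eigenvalue bound indeed applies at $\xi$.
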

\begin{proof}
  For case 1,
  consider the function
  \begin{equation*}
    G\left(\rho; \bar w, u\right) = \mathcal L_1\left(\rho; \bar w\right)
    - u \rho = \rho \bar w \psi(\rho) - u \rho.
  \end{equation*}
  Equation~\eqref{eq:rho-L_1} implies that
  \begin{equation*}
    G\left(\rho_1; \bar w, u\right) = F_1
    \qquad \textrm{ and } \qquad
    G\left(\rho_2; \bar w, u\right) = F_2,
  \end{equation*}
  so that we need to prove that $G$ is invertible with respect to $\rho$
  and that the inverse is Lipschitz continuous with Lipschitz constant
  $\frac{1}{\bar w \bar \lambda + u}$.
  We have
  \begin{equation*}
    \partial_\rho G\left(\rho; \bar w, u\right) = \bar w \frac{d}{d \rho}
    \left(\rho \psi(\rho)\right) - u \le - \bar w \bar \lambda - u < 0,
  \end{equation*}
  by~\ref{Hyp:H2} and~\ref{Hyp:H3}. Hence $G$ is invertible with respect to
  $\rho$.
  Moreover
  \begin{equation*}
    \partial_\rho G\left(\rho; \bar w, u\right) = \bar w \frac{d}{d \rho}
    \left(\rho \psi(\rho)\right) - u \ge \bar w \left(\psi(R) + R \psi'(R)
    \right) - u.
  \end{equation*}
  Therefore the inverse function satisfies
  \begin{equation*}
    - \frac1{\bar w \bar \lambda + u} \le
    \partial_F G^{-1}\left(F; \bar w, u\right)
    \le \frac{1}{\bar w \left(\psi(R) + R \psi'(R)
      \right) - u} < 0
  \end{equation*}
  and so
  \begin{equation*}
    \modulo{\partial_F G^{-1}\left(F; \bar w, u\right)} \le
    \frac1{\bar w \bar \lambda + u}
  \end{equation*}
  concluding the proof.

  In case 2, consider the function
  consider the function
  \begin{equation*}
    G\left(\rho; \bar w, u\right) = \mathcal L_1\left(\rho; \bar w\right)
    - u \rho  - F_\alpha(\bar w, u).
  \end{equation*}
  Equation~\eqref{eq:rho-L_1-2} implies that
  \begin{equation*}
    G\left(\rho_1; \bar w, u_1\right) = 0
    \qquad \textrm{ and } \qquad
    G\left(\rho_2; \bar w, u_2\right) = 0.
  \end{equation*}
  Using the Implicit Function Theorem and analogous estimates contained in
  the previous case, we easily conclude.  
\end{proof}

\subsection*{Acknowledgments}
\revv{The authors thank the anonymous referees for their precious suggestions.}

Both the authors were partially supported by the INdAM-GNAMPA 2022 project \emph{Evolution Equations: Wellposedness, Control and Applications}.

M. Garavello was partially supported by the Project  funded  under  the  National  Recovery  and  Resilience  Plan  (NRRP),  Mission  4 Component  2  Investment  1.4 -Call  for  tender  No.  3138  of  16/12/2021
of  Italian  Ministry  of  University and Research
funded by the European Union -NextGenerationEU.
Award  Number:  
  CN000023,  Concession  Decree  No.  1033  of  17/06/2022  adopted  by  the  Italian Ministry of University and Research, CUP: H43C22000510001, Centro Nazionale per la Mobilità Sostenibile.

{\small{

    \bibliographystyle{abbrv}

    \bibliography{model} }}

\def\cprime{$'$} \def\cydot{\leavevmode\raise.4ex\hbox{.}}
\begin{thebibliography}{10}

\bibitem{ADR}
B.~Andreianov, C.~Donadello, and M.~D. Rosini.
\newblock A second-order model for vehicular traffics with local point
  constraints on the flow.
\newblock {\em Math. Models Methods Appl. Sci.}, 26(4):751--802, 2016.

\bibitem{MR4244931}
B.~Andreianov, C.~Donadello, and M.~D. Rosini.
\newblock Entropy solutions for a two-phase transition model for vehicular
  traffic with metastable phase and time depending point constraint on the
  density flow.
\newblock {\em NoDEA Nonlinear Differential Equations Appl.}, 28(3):Paper No.
  32, 37, 2021.

\bibitem{AwRascle}
A.~Aw and M.~Rascle.
\newblock Resurrection of ``second order'' models of traffic flow.
\newblock {\em SIAM J. Appl. Math.}, 60(3):916--938 (electronic), 2000.

\bibitem{MR3253235}
N.~Bellomo, A.~Bellouquid, J.~Nieto, and J.~Soler.
\newblock On the multiscale modeling of vehicular traffic: from kinetic to
  hydrodynamics.
\newblock {\em Discrete Contin. Dyn. Syst. Ser. B}, 19(7):1869--1888, 2014.

\bibitem{MR4177199}
M.~Benyahia and M.~D. Rosini.
\newblock Lack of {BV} bounds for approximate solutions to a two-phase
  transition model arising from vehicular traffic.
\newblock {\em Math. Methods Appl. Sci.}, 43(18):10381--10390, 2020.

\bibitem{BlandinWorkGoatinPiccoliBayen}
S.~Blandin, D.~Work, P.~Goatin, B.~Piccoli, and A.~Bayen.
\newblock A general phase transition model for vehicular traffic.
\newblock {\em SIAM J. Appl. Math.}, 71(1):107--127, 2011.

\bibitem{BressanLectureNotes}
A.~Bressan.
\newblock {\em Hyperbolic systems of conservation laws}, volume~20 of {\em
  Oxford Lecture Series in Mathematics and its Applications}.
\newblock Oxford University Press, Oxford, 2000.
\newblock The one-dimensional Cauchy problem.

\bibitem{MR1702637}
G.-Q. Chen and H.~Frid.
\newblock Divergence-measure fields and hyperbolic conservation laws.
\newblock {\em Arch. Ration. Mech. Anal.}, 147(2):89--118, 1999.

\bibitem{MR3019727}
G.~M. Coclite and M.~Garavello.
\newblock Vanishing viscosity for mixed systems with moving boundaries.
\newblock {\em J. Funct. Anal.}, 264(7):1664--1710, 2013.

\bibitem{Colombo1.5}
R.~M. Colombo.
\newblock {Hyperbolic phase transitions in traffic flow.}
\newblock {\em SIAM J. Appl. Math.}, 63(2):708--721, 2002.

\bibitem{ColomboGoatin}
R.~M. Colombo and P.~Goatin.
\newblock A well posed conservation law with a variable unilateral constraint.
\newblock {\em J. Differential Equations}, 234(2):654--675, 2007.

\bibitem{ColomboMarcelliniPreprint}
R.~M. Colombo and F.~Marcellini.
\newblock A mixed {O}{D}{E}–-{P}{D}{E} model for vehicular traffic.
\newblock {\em Mathematical Methods in the Applied Sciences}, 38:1292--1302,
  2015.

\bibitem{ColomboMarcelliniRascle}
R.~M. Colombo, F.~Marcellini, and M.~Rascle.
\newblock A 2-phase traffic model based on a speed bound.
\newblock {\em SIAM J. Appl. Math.}, 70(7):2652--2666, 2010.

\bibitem{MR2574377}
C.~M. Dafermos.
\newblock {\em Hyperbolic conservation laws in continuum physics}, volume 325
  of {\em Grundlehren der Mathematischen Wissenschaften [Fundamental Principles
  of Mathematical Sciences]}.
\newblock Springer-Verlag, Berlin, third edition, 2010.

\bibitem{Davis}
L.~C. Davis.
\newblock Effect of adaptive cruise control systems on traffic flow.
\newblock {\em Phys. Rev. E}, 69:066110, Jun 2004.

\bibitem{DelleMonache2019275}
M.~Delle~Monache, T.~Liard, A.~Rat, R.~Stern, R.~Bhadani, B.~Seibold,
  J.~Sprinkle, D.~Work, and B.~Piccoli.
\newblock Feedback control algorithms for the dissipation of traffic waves with
  autonomous vehicles.
\newblock {\em Springer Optimization and Its Applications}, 150:275--299, 2019.
\newblock cited By 3.

\bibitem{DelleMonacheGoatin}
M.~L. Delle~Monache and P.~Goatin.
\newblock Scalar conservation laws with moving constraints arising in traffic
  flow modeling: an existence result.
\newblock {\em J. Differential Equations}, 257(11):4015--4029, 2014.

\bibitem{DP15}
M.~L. Delle~Monache and P.~Goatin.
\newblock Stability estimates for scalar conservation laws with moving flux
  constraints.
\newblock {\em Netw. Heterog. Media}, 12(2):245--258, 2017.

\bibitem{MR4002732}
M.~L. Delle~Monache, T.~Liard, B.~Piccoli, R.~Stern, and D.~Work.
\newblock Traffic reconstruction using autonomous vehicles.
\newblock {\em SIAM J. Appl. Math.}, 79(5):1748--1767, 2019.

\bibitem{MR4357097}
G.~Dimarco, A.~Tosin, and M.~Zanella.
\newblock Kinetic derivation of {A}w-{R}ascle-{Z}hang-type traffic models with
  driver-assist vehicles.
\newblock {\em J. Stat. Phys.}, 186(1):Paper No. 17, 26, 2022.

\bibitem{FHS13}
S.~Fan, M.~Herty, and B.~Seibold.
\newblock Comparative model accuracy of a data-fitted generalized
  {A}w-{R}ascle-{Z}hang model.
\newblock {\em Netw. Heterog. Media}, 9(2):239--268, 2014.

\bibitem{2017arXiv170203624F}
S.~{Fan}, Y.~{Sun}, B.~{Piccoli}, B.~{Seibold}, and D.~B. {Work}.
\newblock {A Collapsed Generalized Aw-Rascle-Zhang Model and Its Model
  Accuracy}.
\newblock {\em ArXiv e-prints}, Feb. 2017.

\bibitem{GaravelloGoatin}
M.~Garavello and P.~Goatin.
\newblock The {A}w-{R}ascle traffic model with locally constrained flow.
\newblock {\em J. Math. Anal. Appl.}, 378(2):634--648, 2011.

\bibitem{GaravelloGoatinLiardPiccoli}
M.~Garavello, P.~Goatin, T.~Liard, and B.~Piccoli.
\newblock A multiscale model for traffic regulation via autonomous vehicles.
\newblock {\em J. Differential Equations}, 269(7):6088--6124, 2020.

\bibitem{GaravelloPiccoli}
M.~Garavello and B.~Piccoli.
\newblock {\em Traffic flow on networks}, volume~1 of {\em AIMS Series on
  Applied Mathematics}.
\newblock American Institute of Mathematical Sciences (AIMS), Springfield, MO,
  2006.
\newblock Conservation laws models.

\bibitem{g-p-coupling-2012}
M.~Garavello and B.~Piccoli.
\newblock Coupling of {L}ighthill-{W}hitham-{R}ichards and phase transition
  models.
\newblock {\em J. Hyperbolic Differ. Equ.}, 10(3):577--636, 2013.

\bibitem{GaravelloVilla}
M.~Garavello and S.~Villa.
\newblock The {C}auchy problem for the {A}w-{R}ascle-{Z}hang traffic model with
  locally constrained flow.
\newblock {\em Journal of Hyperbolic Differential Equations}, 14(03):393--414,
  2017.

\bibitem{GazisHermanRothery}
D.~Gazis, R.~Herman, and R.~Rothery.
\newblock Nonlinear follow-the-leader models of traffic flow.
\newblock {\em Oper. Res.}, 9:545--567, 1961.

\bibitem{Goatin2Phases}
P.~Goatin.
\newblock The {A}w-{R}ascle vehicular traffic flow model with phase
  transitions.
\newblock {\em Math. Comput. Modelling}, 44(3-4):287--303, 2006.

\bibitem{GMBE}
M.~Guériau, R.~Billot, N.-E. El~Faouzi, J.~Monteil, F.~Armetta, and S.~Hassas.
\newblock How to assess the benefits of connected vehicles? {A} simulation
  framework for the design of cooperative traffic management strategies.
\newblock {\em Transportation Research Part C Emerging Technologies}, 67, 04
  2016.

\bibitem{MR3443431}
H.~Holden and N.~H. Risebro.
\newblock {\em Front tracking for hyperbolic conservation laws}, volume 152 of
  {\em Applied Mathematical Sciences}.
\newblock Springer, Heidelberg, second edition, 2015.

\bibitem{KFB}
F.~Knorr, D.~Baselt, M.~Schreckenberg, and M.~Mauve.
\newblock Reducing traffic jams via vanets.
\newblock {\em Vehicular Technology, IEEE Transactions on}, 61:3490--3498, 10
  2012.

\bibitem{MR0267257}
S.~N. Kru\v{z}kov.
\newblock First order quasilinear equations with several independent variables.
\newblock {\em Mat. Sb. (N.S.)}, 81 (123):228--255, 1970.

\bibitem{LattanzioPiccoli}
C.~Lattanzio and B.~Piccoli.
\newblock Coupling of microscopic and macroscopic traffic models at boundaries.
\newblock {\em Math. Models Methods Appl. Sci.}, 20(12):2349--2370, 2010.

\bibitem{MR0350216}
P.~D. Lax.
\newblock {\em Hyperbolic systems of conservation laws and the mathematical
  theory of shock waves}.
\newblock Society for Industrial and Applied Mathematics, Philadelphia, Pa.,
  1973.
\newblock Conference Board of the Mathematical Sciences Regional Conference
  Series in Applied Mathematics, No. 11.

\bibitem{LebacqueMammarHajSalem}
J.~P. Lebacque, X.~Louis, S.~Mammar, B.~Schnetzlera, and H.~Haj-Salem.
\newblock Mod\'elisation du trafic autoroutier au second ordre.
\newblock {\em Comptes Rendus Mathematique}, 346(21--22):1203--1206, November
  2008.

\bibitem{LighthillWhitham}
M.~J. Lighthill and G.~B. Whitham.
\newblock On kinematic waves. {I}{I}. {A} theory of traffic flow on long
  crowded roads.
\newblock {\em Proc. Roy. Soc. London. Ser. A.}, 229:317--345, 1955.

\bibitem{Marcellini2}
F.~Marcellini.
\newblock Existence of solutions to a boundary value problem for a phase
  transition traffic model.
\newblock {\em Networks and Heterogeneous Media}, 12(2):259--275, 2017.

\bibitem{MR4060810}
F.~Marcellini.
\newblock The {R}iemann problem for a two-phase model for road traffic with
  fixed or moving constraints.
\newblock {\em Math. Biosci. Eng.}, 17(2):1218--1232, 2020.

\bibitem{MR4172835}
F.~Marcellini.
\newblock The follow-the-leader model without a leader: an infinite-dimensional
  {C}auchy problem.
\newblock {\em J. Math. Anal. Appl.}, 495(1):Paper No. 124664, 21, 2021.

\bibitem{Richards}
P.~I. Richards.
\newblock Shock waves on the highway.
\newblock {\em Operations Res.}, 4:42--51, 1956.

\bibitem{MR0385023}
W.~Rudin.
\newblock {\em Principles of mathematical analysis}.
\newblock International Series in Pure and Applied Mathematics. McGraw-Hill
  Book Co., New York-Auckland-D\"{u}sseldorf, third edition, 1976.

\bibitem{TAH}
A.~Talebpour and H.~Mahmassani.
\newblock Influence of connected and autonomous vehicles on traffic flow
  stability and throughput.
\newblock {\em Transportation Research Part C Emerging Technologies},
  71:143--163, 10 2016.

\bibitem{VGC}
S.~Villa, P.~Goatin, and C.~Chalons.
\newblock {Moving bottlenecks for the Aw-Rascle-Zhang traffic flow model}.
\newblock {\em {Discrete and Continuous Dynamical Systems - Series B}},
  22(10):3921--3952, 2017.

\bibitem{Wang}
M.~Wang, W.~Daamen, S.~P. Hoogendoorn, and B.~van Arem.
\newblock Cooperative car-following control: Distributed algorithm and impact
  on moving jam features.
\newblock {\em IEEE Transactions on Intelligent Transportation Systems},
  17(5):1459--1471, 2016.

\bibitem{Zhang2002}
H.~Zhang.
\newblock A non-equilibrium traffic model devoid of gas-like behavior.
\newblock {\em Transportation Research Part B: Methodological}, 36(3):275 --
  290, 2002.

\end{thebibliography}
\end{document}